\documentclass[a4paper,oneside,11pt]{amsart}

\reversemarginpar

\usepackage[colorlinks,hyperindex,linkcolor=blue,urlcolor=black,pdftitle={On isometric asymptotes for operators quasisimilar to isometries},pdfauthor={Maria F.Gamal'}]
{hyperref}

\usepackage{amsmath}
\usepackage{amsfonts}
\usepackage{amssymb}
\usepackage{amsthm}

\usepackage[english]{babel}
\usepackage{enumerate}

\usepackage{graphicx}
\usepackage{color}

\usepackage[abbrev]{amsrefs}

\numberwithin{equation}{section}

\theoremstyle{plain}
\newtheorem{theorem}{Theorem}[section]
\newtheorem{lemma}[theorem]{Lemma}
\newtheorem{corollary}[theorem]{Corollary}
\newtheorem{proposition}[theorem]{Proposition}
\newtheorem{theoremcite}{Theorem}

\theoremstyle{definition}
\newtheorem{remark}[theorem]{Remark}
\newtheorem{example}[theorem]{Example}



\begin{document}

\title[On isometric asymptotes]{On isometric asymptotes of operators quasisimilar to isometries}
\author{Maria F. Gamal'}
\address{
 St. Petersburg Branch\\ V. A. Steklov Institute of Mathematics\\
Russian Academy of Sciences\\ Fontanka 27, St. Petersburg\\ 
191023, Russia  }
\email{gamal@pdmi.ras.ru}

\maketitle

\begin{abstract}
The notion of isometric and unitary asymptotes was introduced for power bounded operators in 1989  and was  generalized 
in 2016--2019 by  K\'erchy. In particular, it was shown that there exist operators without unitary asymptote. In this paper  operators are constructed which are quasisimilar to  isometries and do not have isometric asymptotes.   
Also a contraction is constructed which is quasisimilar to the unilateral shift of infinite multiplicity and whose  isometric asymptote  contains a (non-zero) unitary summand.

\noindent Keywords: \emph{unilateral shift, isometry, unitary operator, quasisimilarity, unitary asymptote.}

\noindent MSC(2020): 47B02, 47A45, 47A99
\end{abstract}




\section{Introduction}

Let $\mathcal H$ be a (complex, separable) Hilbert space, and let 
 $T$ be a   (linear, bounded) operator on $\mathcal H$. The spectrum and the point spectrum of $T$ are denoted by
 $\sigma(T)$ and $\sigma_p(T)$, respectively. 
The \emph{(cyclic) multiplicity} $\mu_T$
 of an operator $T$ 
is the minimal dimension $N$, $0\leq N\leq \infty$, of  subspaces $\mathcal E\subset\mathcal H$ such that 
$\mathcal H=\vee_{k=0}^\infty T^k\mathcal E$. An operator $T$ 
is called \emph{cyclic}, if $\mu_T=1$. 
The lattice of all (closed) subspaces $\mathcal E$ of $\mathcal H$ such that $T\mathcal E\subset\mathcal E$ is called 
the \emph{invariant subspace lattice} of $T$ and is denoted by $\operatorname{Lat}T$. 
The lattice of all  (closed) subspaces $\mathcal E$ of $\mathcal H$ such that $A\mathcal E\subset\mathcal E$ 
for every operator $A$ such that $AT=TA$
is called the \emph{hyperinvariant subspace lattice} of $T$ and is denoted by $\operatorname{Hlat}T$. 

Let $\mathcal H$  and $\mathcal K$ be two Hilbert spaces. Denote by $\mathcal L(\mathcal H, \mathcal K)$ 
the space of all  (linear, bounded) transformations acting from $\mathcal H$ to  $\mathcal K$. 
Set $\mathcal L(\mathcal H)=\mathcal L(\mathcal H, \mathcal H)$, then $\mathcal L(\mathcal H)$ is 
 the algebra of all (linear, bounded) operators acting on $\mathcal H$. 
Let $T\in\mathcal L(\mathcal H)$,
$R\in\mathcal L(\mathcal K)$, $X\in\mathcal L(\mathcal H, \mathcal K)$ be such that $X$ \emph{intertwines} $T$ with $R$, that is, $XT=RX$. 
If $\operatorname{clos}X\mathcal H=\mathcal K$, then we write $T\buildrel d \over\prec R$. A transformation 
 $X\in\mathcal L(\mathcal H, \mathcal K)$ is called a \emph{quasiaffinity}, if $\ker X=\{0\}$ and 
$\operatorname{clos}X\mathcal H=\mathcal K$. 
If $X$ is a quasiaffinity, then $T$ is called a \emph{quasiaffine transform} of $R$, in notation: $T\prec R$. 
If $T\prec R$ and $R\prec T$, then $T$ and $R$ are called \emph{quasisimilar}, in notation: $T\sim R$. 
If $X$ is \emph{invertible}, that is, 
$X^{-1}\in\mathcal L(\mathcal K, \mathcal H)$, then $T$ and $R$ are called \emph{similar}, in notation: $T\approx R$.  If, in addition, 
$X$ is a unitary transformation, then $T$ and $R$ are called \emph{unitarily equivalent}, in notation: $T\cong R$. 
Recall that if $U$ and $U_1$ are two unitary operators such that $U\prec U_1$ or 
$U\buildrel d \over\prec U_1\buildrel d \over\prec U$,  then $U\cong U_1$, 
and if $V$ and $V_1$ are two isometries such that $V\sim V_1$, then $V\cong V_1$ ({\cite[Proposition II.3.4]{nfbk}}, 
{\cite[Lemma II.13.6, Propositions II.10.6 and II.13.7]{conway}}).

An operator  $T\in\mathcal L(\mathcal H)$ is called \emph{power bounded}, if $\sup_{n\in\mathbb N}\|T^n\|<\infty$.
 A power bounded operator $T$ is 
\emph{of class} $C_{1\cdot}$, if $\inf_{n\in\mathbb N}\|T^n x\|>0$ for every $0\neq x\in\mathcal H$, 
and is \emph{of class} $C_{0\cdot}$, if $\lim_n\|T^n x\|=0$ for every $ x\in\mathcal H$. 
$T$ is \emph{of class} $C_{\cdot a}$, if $T^*$ is of class $C_{a\cdot}$, and $T$ is \emph{of class} $C_{ab}$, 
if $T$ is of classes $C_{a\cdot}$ and $C_{\cdot b}$, $a,b=0,1$.
 
An operator $T$ is called a {\it contraction} if $\|T\|\leq 1$. Clearly, a contraction is power bounded. 

The notion of isometric and unitary asymptotes was introduced in \cite{ker89} for power bounded operators. In particular, it was proved 
in \cite{ker89} that every power bounded operator has an isometric asymptote constructed using a Banach limit.
 In \cite{ker16} and  \cite{kerntuple} (see also {\cite[Sec. IX.1]{nfbk}}) the notion of unitary asymptote was  generalized  on operators which are not necessarily power bounded. 
Recall the definitions.  
A pair $(X,U)$, where $U\in\mathcal L(\mathcal K)$ is an isometry (a unitary operator), and $X$ is a 
 transformation such that $XT=UX$, 
 is called an \emph{isometric} (a \emph{unitary}) \emph{asymptote} of an operator $T\in\mathcal L(\mathcal H)$, if for any other pair $(Y,V)$, 
where $V$ is an isometry (a unitary operator), and $Y$ is a  transformation such that $YT=VY$, there exists 
a unique transformation $Z$ such that $ZU=VZ$ and $Y=ZX$. The uniqueness of $Z$ is equivalent to the relation 
$\operatorname{clos}X\mathcal H=\mathcal K$ for an isometric asymptote, and to the relation  
$\vee_{n\geq 0}U^{-n}X\mathcal H = \mathcal K$ for a unitary asymptote (that is, the pair $(X,U)$ is minimal). 
The transformation $X$ is called the \emph{canonical intertwining mapping}, and the isometry $U$ sometime also 
\emph{will be called the isometric (unitary) asymptote}. 

It follows from the definition of an isometric asymptote that if $T$ is an operator and $V$ is an isometry such that 
  $T\approx V$, then $V$ is an isometric asymptote of $T$. 

If $(X,V)$ is an isometric asymptote of $T\in\mathcal L(\mathcal H)$, then the pair $(Y,U)$, where $U$ is the minimal unitary extension of $V$ 
and $Y$ is a natural extension of $X$, is a unitary asymptote of $T$. In particular, if the isometry $V$ is unitary, then the 
isometric and unitary asymptotes of $T$ are equal.  Conversely, if  $(Y,U)$ is a unitary asymptote of $T$, then 
$(X,U|_{\operatorname{clos}Y\mathcal H})$, where $X$ is equal to $Y$ considered as a transformation from $\mathcal H$ 
into $\operatorname{clos}Y\mathcal H$, is an isometric asymptote of $T$. Thus, \emph{$T$ has an isometric asymptote 
 if and only if $T$ has a unitary asymptote}. (See \cite{ker89} and  {\cite[Sec. IX.1]{nfbk}}.)

If $T$ has an isometric asymptote $(X,V)$ and $T$ is invertible, then $V$ is unitary and $(X,V^{-1})$ 
is an isometric (and a unitary) asymptote of $T^{-1}$ {\cite[Proposition 22(a)]{kerntuple}}. 

Two pairs $(X,U)$ and $(X_1,U_1)$,
 where $U$ and $U_1$ are isometric (unitary) operators, and $X$ and $X_1$ are  transformations such that $XT=UX$ and $X_1T=U_1X_1$, 
are  \emph{similar}, if there exists an \emph{invertible} transformation $Z$
such that $ZU=U_1Z$ and $X_1=ZX$. It follows from the definition  that 
 an isometric (a unitary) asymptote of $T$ is defined up to similarity. 
If two operators are similar and one of them has an isometric (a unitary) asymptote, then the other has an isometric (a unitary) 
 asymptote, too, 
and their isometric (unitary) asymptotes are similar \cite{ker89}. 

As was mentioned above, for power bounded operators an isometric asymptote can be constructed using a Banach limit \cite{ker89}. Moreover, if  $T\in\mathcal L(\mathcal H)$ is a contraction, then there exists a canonical intertwining mapping $X$ for $T$ such that 
\begin{equation}\label{lim} \lim_{n\to\infty}(T^nx_1, T^nx_2)=(Xx_1,Xx_2) 
\text{ for all } x_1,x_2\in\mathcal H \end{equation}
{\cite[Sec. IX.1]{nfbk}}. 

The following notation will be used. For a Hilbert space $\mathcal H$ and its subspace $\mathcal M$ by $I_{\mathcal H}$ and $P_{\mathcal M}$ the identical operator on $\mathcal H$ and the orthogonal projection onto $\mathcal M$ are denoted, respectively.

$\mathbb D$ is the open unit disc, $\overline{\mathbb D}$ is the closed unit disc,  $\mathbb T$ is the unit circle, 
$m$ is the normalized Lebesgue (arc length) measure on $\mathbb T$. A unitary operator is called absolutely continuous, if  its spectral measure is absolutely continuous with respect to $m$. 

$H^p$ is the Hardy space on $\mathbb D$ ($p=2, \infty$), 
$S\in \mathcal L(H^2)$ is the operator of multiplication by the independent variable on $H^2$ (the  \emph{unilateral shift}),
$U_{\mathbb T}\in \mathcal L(L^2(\mathbb T,m))$ is the operator of multiplication by the independent variable on $L^2(\mathbb T,m)$
 (the  \emph{bilateral shift}). 
For a cardinal number $0\leq n\leq \infty$ the \emph{unilateral shift of multiplicity} $n$ is 
$S_n=\oplus_{k=1}^n S$ which acts on $H^2_n=\oplus_{k=1}^n H^2$. For $n=0$, $S_n$ is the zero operator acting on the zero space $\{0\}$, 
for $n=1$ we have  $S_1=S$ and $H^2_1=H^2$. It is well known and easy to see that $\mu_{S_n}=n$, $0\leq n\leq \infty$.

If $T$ is a power bounded operator (on a separable Hilbert space), then $\sigma(T)\subset\overline{\mathbb D}$,  and 
$\sigma_p(T)\cap\mathbb T$ is at most countable by \cite{jamison}.
The relationship between the peripheral point spectrum 
$\sigma_p(T)\cap\mathbb T$ for operators $T$ with 
$\sigma(T)\subset\overline{\mathbb D}$ and growth
of  $\|T^n\|$  is considered in \cite{elfallahransford}, \cite{eisnergrivaux}, see also references therein. 
In  particular, examples of operators with uncountable  peripheral point spectrum  was given there. 
In \cite{ma1}, \cite{ma2} one-dimentional perturbations $T$ of unitaries are constructed such that $\sigma(T)\subset \mathbb T$ and $\sigma_p(T)$ is  uncountable.

The proof of the following theorem  is contained in {\cite[Example 14]{kerntuple}}. 

\begin{theorem}\label{thmeigen}\cite{kerntuple}. Let $T$ be an operator (on a separable Hilbert space). If $\sigma_p(T^*)\cap\mathbb T$ is uncountable, 
then $T$ does not have an isometric asymptote. 
\end{theorem}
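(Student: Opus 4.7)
The plan is to exploit the defining universal property of the isometric asymptote in order to convert each peripheral eigenvalue of $T^*$ into a peripheral eigenvalue of the adjoint of the asymptotic isometry, and then to derive a contradiction from separability. I would argue by contradiction: assume $T$ does have an isometric asymptote $(X,V)$ with $V\in\mathcal L(\mathcal K)$.

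First, I would fix an arbitrary $\zeta\in\sigma_p(T^*)\cap\mathbb T$, pick a nonzero eigenvector $x_\zeta$ with $T^*x_\zeta=\overline{\zeta}x_\zeta$, and associate to it the bounded functional $Y_\zeta\colon\mathcal H\to\mathbb C$ defined by $Y_\zeta(y)=\langle y,x_\zeta\rangle$. Using $\langle Ty,x_\zeta\rangle=\langle y,T^*x_\zeta\rangle$, one checks $Y_\zeta T=\zeta Y_\zeta$, so $(Y_\zeta,\zeta I_{\mathbb C})$ is a pair consisting of a bounded intertwining transformation and the one-dimensional unitary (hence isometric) operator $\zeta I_{\mathbb C}$. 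The universal property of $(X,V)$ then furnishes a unique $Z_\zeta\colon\mathcal K\to\mathbb C$ satisfying $Y_\zeta=Z_\zeta X$ and $Z_\zeta V=\zeta Z_\zeta$. Since $x_\zeta\neq 0$, the functional $Y_\zeta$ is nonzero, and therefore $Z_\zeta\neq 0$.

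Next, by the Riesz representation theorem I would write $Z_\zeta(k)=\langle k,v_\zeta\rangle$ for some $v_\zeta\in\mathcal K\setminus\{0\}$. A short computation translates $Z_\zeta V=\zeta Z_\zeta$ into $V^*v_\zeta=\overline{\zeta}v_\zeta$, so $\overline{\zeta}\in\sigma_p(V^*)\cap\mathbb T$. Distinct values of $\zeta$ produce distinct eigenvalues $\overline{\zeta}$ of $V^*$, so this yields an injection $\sigma_p(T^*)\cap\mathbb T\hookrightarrow\sigma_p(V^*)\cap\mathbb T$.

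To conclude, I would observe that $\sigma_p(V^*)\cap\mathbb T$ is at most countable whenever $V$ is an isometry on a separable Hilbert space. Indeed, by the Wold decomposition $V=U\oplus S_n$ with $U$ unitary and $S_n$ a unilateral shift, and since $\sigma_p(S_n^*)=\mathbb D$ has empty intersection with $\mathbb T$, every peripheral eigenvector of $V^*$ lies in the unitary summand and is thus an eigenvector of the normal operator $U^*$; eigenspaces of a normal operator attached to distinct eigenvalues are pairwise orthogonal, so separability of $\mathcal K$ forces the set of such eigenvalues to be countable. Combined with the injection above, this contradicts the uncountability of $\sigma_p(T^*)\cap\mathbb T$. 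The one step that requires a little care is the application of the universal property to the degenerate pair $(Y_\zeta,\zeta I_{\mathbb C})$: this is the move that converts purely spectral data about $T^*$ into the existence of a genuine bounded intertwiner, and hence into spectral data for $V^*$. Once that translation is in place, the rest is a routine separability count.
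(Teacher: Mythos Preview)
Your argument is correct and follows essentially the same route as the paper: the paper projects onto the full eigenspace $\mathcal E_\zeta=\ker(T^*-\zeta I)$ rather than picking a single eigenvector, but the use of the universal property to manufacture $Z_\zeta$ and then read off an eigenvalue of $V^*$ is identical, and the paper simply cites Jamison's theorem where you spell out the countability via the Wold decomposition. One cosmetic slip: for $\zeta\in\sigma_p(T^*)$ the eigen-equation should be $T^*x_\zeta=\zeta x_\zeta$ (not $\overline\zeta x_\zeta$), which merely swaps $\zeta$ and $\overline\zeta$ in your conclusion and is harmless since conjugation is a bijection of $\mathbb T$.
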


\begin{proof} For  $\zeta\in \sigma_p(T^*)$ set $\mathcal E_\zeta=\ker(T^*-\zeta I)$. 
Then $P_{\mathcal E_\zeta} T= \overline\zeta I_{\mathcal E_\zeta}P_{\mathcal E_\zeta}$. If $T$ has an isometric asymptote $(X,V)$, 
then for every $\zeta\in \sigma_p(T^*)\cap\mathbb T$ there exists a transformation $Z_\zeta$ such that 
 $P_{\mathcal E_\zeta}=Z_\zeta X$ and $Z_\zeta V=\overline\zeta I_{\mathcal E_\zeta}Z_\zeta$. Since the range of
 $Z_\zeta$ is dense, 
we have $Z_\zeta^*$ is injective. Therefore, $\zeta\in\sigma_p(V^*)$. Thus, $\sigma_p(T^*)\cap\mathbb T\subset \sigma_p(V^*)$. 
Since 
 $\sigma_p(V^*)\cap\mathbb T$ is at most countable, 
$\sigma_p(T^*)\cap\mathbb T$ 
must be at most countable, a contadiction.
\end{proof}

In Proposition \ref{thmweight} below an example is given which seems to be more natural than 
 {\cite[Example 14]{kerntuple}}. In particular, the operators $ \mathcal S_\omega$ and $\mathcal S_\omega^{-1}$ from Proposition \ref{thmweight} have  non-zero unitary asymptotes. Also, the weight $\omega$ can be chosen such that
 $\sigma( \mathcal S_\omega)=\sigma( \mathcal S_\omega^{-1})=\mathbb T$, see  {\cite[Proposition 2.3]{est}} 

Consider a weight $\omega\colon\mathbb Z \to [1,\infty)$ with the following properties: $\omega$ is unbounded,   nonincreasing,  and  $\omega(n)=1$ for all $n\geq 0$. 
Set $$L^2_\omega(\mathbb T)=\Bigl\{f\in L^2(\mathbb T,m)\ :\ \|f\|^2_\omega=\sum_{n\in\mathbb Z}|\widehat f(n)|^2\omega^2(n)<\infty\Bigr\}.$$
The \emph{bilateral weighted shift}  $\mathcal S_\omega\in\mathcal L(L^2_\omega(\mathbb T))$ is the operator 
of multiplication by the independent variable on $L^2_\omega(\mathbb T)$. 
It is known that $ \mathcal S_\omega$ is a contraction of class $C_{10}$, and $ \mathcal S_\omega$ is invertible if and only if 
\begin{equation}\label{omegainv}\sup_{n\in\mathbb Z}\frac{\omega(n-1)}{\omega(n)}<\infty. \end{equation}
For references, see   {\cite[Sec. 2]{est}} and {\cite[Sec. 7]{kersz}}. 

Denote by $Y_\omega$ the natural imbedding of $L^2_\omega(\mathbb T)$ into $L^2(\mathbb T,m)$. 
 It is easy to see that  $Y_\omega$
is a quasiaffinity, and   
$\lim_k(\mathcal S_\omega^kf,\mathcal S_\omega^kg)_{L^2_\omega(\mathbb T)}
=(Y_\omega f, Y_\omega g)_{ L^2(\mathbb T,m)}$ for every $f$, $g\in L^2_\omega(\mathbb T)$. Therefore,   
   $(Y_\omega, U_{\mathbb T})$ is an isometric (and unitary) asymptote of $\mathcal S_\omega$ satisfying \eqref{lim}.

The following proposition is {\cite[Theorem 3.11(2)]{nik}} formulated in the form convenient for our purpose. 

\begin{proposition}\cite{nik}\label{thmweight}
 Suppose that $\omega\colon\mathbb Z \to [1,\infty)$ is unbounded,   nonincreasing,  $\omega(n)=1$ for all $n\geq 0$,  
$\omega$ satisfies \eqref{omegainv}, and $$\sum_{n=1}^\infty\frac{1}{\omega^2(-n)}<\infty.$$ 
 Set $\mathcal M=\{f\in L^2_\omega(\mathbb T)\ :\ \widehat f(n)=0 \text{ for } n\geq 1\}$.
Then $\mathcal M\in\operatorname{Lat}\mathcal S_\omega^{-1}$ and
 $\overline{\mathbb D}\subset\sigma_p((\mathcal S_\omega^{-1}|_{\mathcal M})^*)$.
Furthermore, $\mathcal S_\omega^{-1}$ has an isometric asymptote $(Y_\omega, U_{\mathbb T}^{-1})$ and $\mathcal S_\omega^{-1}|_{\mathcal M}$ does not have an  isometric asymptote. 
\end{proposition}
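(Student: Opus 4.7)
The proposition bundles four claims: invariance of $\mathcal M$, the inclusion $\overline{\mathbb D}\subset\sigma_p((\mathcal S_\omega^{-1}|_{\mathcal M})^*)$, identification of the isometric asymptote of $\mathcal S_\omega^{-1}$, and the absence of an isometric asymptote for $\mathcal S_\omega^{-1}|_{\mathcal M}$. The plan is to do the point-spectrum calculation carefully and to let everything else fall out. The invariance is immediate: for $f=\sum_{n\leq 0}\widehat f(n)z^n\in\mathcal M$, the image $\mathcal S_\omega^{-1}f=z^{-1}f$ has Fourier coefficients supported in $\{m\leq -1\}\subset\{m\leq 0\}$, so $\mathcal S_\omega^{-1}f\in\mathcal M$. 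The statement about the asymptote of $\mathcal S_\omega^{-1}$ is also formal: the discussion preceding the proposition identifies $(Y_\omega,U_{\mathbb T})$ as an isometric asymptote of $\mathcal S_\omega$, and \eqref{omegainv} makes $\mathcal S_\omega$ invertible, so the cited \cite[Proposition 22(a)]{kerntuple} gives that $(Y_\omega,U_{\mathbb T}^{-1})$ is an isometric asymptote of $\mathcal S_\omega^{-1}$.

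For the spectral inclusion I would construct explicit eigenvectors. Given $\lambda\in\overline{\mathbb D}$, I look for $g_\lambda=\sum_{k\geq 0}c_k z^{-k}\in\mathcal M$ satisfying $(\mathcal S_\omega^{-1}|_{\mathcal M})^*g_\lambda=\bar\lambda g_\lambda$. Using the weighted inner product $(f,h)_\omega=\sum_n\widehat f(n)\overline{\widehat h(n)}\omega^2(n)$ and the identity $\widehat{z^{-1}f}(n)=\widehat f(n+1)$, the eigenvalue equation $\bar\lambda(f,g_\lambda)_\omega=(z^{-1}f,g_\lambda)_\omega$ (valid for all $f\in\mathcal M$) reduces, by matching coefficients of $\widehat f(m)$ for $m\leq 0$, to the recurrence $c_{k+1}=\lambda\,c_k\,\omega^2(-k)/\omega^2(-k-1)$. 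Normalizing $c_0=1$ and telescoping (using $\omega(0)=1$) gives $c_k=\lambda^k/\omega^2(-k)$, so
\[ g_\lambda(z)=\sum_{k\geq 0}\frac{\lambda^k}{\omega^2(-k)}z^{-k}, \qquad \|g_\lambda\|_\omega^2=\sum_{k\geq 0}\frac{|\lambda|^{2k}}{\omega^2(-k)}\leq 1+\sum_{k\geq 1}\frac{1}{\omega^2(-k)}<\infty \]
by the summability hypothesis on $\omega(-n)^{-2}$. Since $c_0=1$, each $g_\lambda$ is nonzero, so $\bar\lambda\in\sigma_p((\mathcal S_\omega^{-1}|_{\mathcal M})^*)$ for every $\lambda\in\overline{\mathbb D}$, which yields $\overline{\mathbb D}\subset\sigma_p((\mathcal S_\omega^{-1}|_{\mathcal M})^*)$.

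The final assertion is then a one-line application of Theorem \ref{thmeigen}: the previous step shows that $\sigma_p((\mathcal S_\omega^{-1}|_{\mathcal M})^*)\cap\mathbb T$ contains all of $\mathbb T$ and is therefore uncountable, so $\mathcal S_\omega^{-1}|_{\mathcal M}$ has no isometric asymptote. The only place requiring genuine care is the weighted-Fourier bookkeeping that produces the recurrence for $c_k$ and the verification that $g_\lambda\in\mathcal M$ uniformly for $|\lambda|\leq 1$; beyond that, I do not foresee a substantive obstacle.
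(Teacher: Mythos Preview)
Your argument is essentially identical to the paper's: both compute the eigenvectors of $(\mathcal S_\omega^{-1}|_{\mathcal M})^*$ explicitly via the Fourier recurrence $\widehat g(-k)=\zeta^k\widehat g(0)/\omega^2(-k)$, use the summability hypothesis to place them in $\mathcal M$, and then invoke \cite[Proposition 22(a)]{kerntuple} and Theorem~\ref{thmeigen}. One harmless slip: the equation $(z^{-1}f,g_\lambda)_\omega=(f,\bar\lambda g_\lambda)_\omega$ equals $\lambda(f,g_\lambda)_\omega$, not $\bar\lambda(f,g_\lambda)_\omega$, so your $g_\lambda$ is actually the eigenvector for eigenvalue $\lambda$ rather than $\bar\lambda$; since $\overline{\mathbb D}$ is conjugation-invariant this does not affect the conclusion.
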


\begin{proof} It is easy to see  that $f\in\mathcal M$ is an eigenvector of $(\mathcal S_\omega^{-1}|_{\mathcal M})^*$ for an eigenvalue 
$\zeta\neq 0$ if and only if 
$\widehat f(-n)=\zeta^n \frac{\widehat f(0)}{\omega^2(-n)}$ for $n\geq 0$. For $\zeta=0$, an eigenvector is $f\in\mathcal M$ such that $\widehat f(0)\neq 0$ and $\widehat f(n)=0$ for $n\leq -1$. Thus, the claim about 
$\sigma_p((\mathcal S_\omega^{-1}|_{\mathcal M})^*)$ is proved. It remains to apply  
   {\cite[Proposition 22(a)]{kerntuple}} and Theorem \ref{thmeigen}. 
\end{proof}

The following simple lemma is well known and is added for the sake of completeness. 

\begin{lemma}\label{lemd} Suppose that $0\leq n<\infty$, $U$ is a unitary operator, $V$ is an isometry, and 
$U\oplus S_n\buildrel d \over\prec   V\buildrel d \over\prec  U\oplus S_n$. 
Then $V\cong U\oplus S_n$. 
\end{lemma}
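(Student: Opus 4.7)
The plan is to apply the Wold decomposition, writing $V=V_u\oplus V_s$ with $V_u$ unitary and $V_s$ a unilateral shift of some multiplicity $k$, and to prove separately that $k=n$ and $V_u\cong U$; uniqueness of the Wold decomposition then yields $V\cong U\oplus S_n$.

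First I would show $k=n$ via a defect-space argument. Set $V'=U\oplus S_n$ acting on $\mathcal H'=\mathcal H_U\oplus H^2_n$, so $\dim(\mathcal H'\ominus V'\mathcal H')=n$ and $\dim(\mathcal K\ominus V\mathcal K)=k$, where $\mathcal K$ denotes the space of $V$. The dense-range intertwiner $X\colon\mathcal H'\to\mathcal K$ realizing $V'\buildrel d\over\prec V$ satisfies $X(V'\mathcal H')\subset V\mathcal K$, hence descends to a dense-range linear map $\overline X\colon\mathcal H'/V'\mathcal H'\to\mathcal K/V\mathcal K$. Since the domain has finite dimension $n$, the image of $\overline X$ is automatically closed, forcing $k\leq n$ (and $k$ finite). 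The symmetric argument applied to the intertwiner realizing $V\buildrel d\over\prec V'$ yields $n\leq k$. Thus $k=n$, and two unilateral shifts of the same finite multiplicity are unitarily equivalent, so $V_s\cong S_n$.

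For the unitary part, I would pass to unitary asymptotes. The universal property transfers the relation $T_1\buildrel d\over\prec T_2$ to the respective unitary asymptotes (the intertwiner produced by universality has dense range thanks to the minimality of the target pair). The unitary asymptote of the isometry $V'=U\oplus S_n$ is $U\oplus\widetilde S_n$, and that of $V=V_u\oplus V_s$ is $V_u\oplus\widetilde S_n$, where $\widetilde S_n$ denotes the bilateral shift of multiplicity $n$ (the minimal unitary extension of $S_n$). The fact on unitaries recalled in the introduction (two-sided $\buildrel d\over\prec$ implies $\cong$) now gives $U\oplus\widetilde S_n\cong V_u\oplus\widetilde S_n$. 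To cancel the $\widetilde S_n$ summand, split both sides into parts singular and absolutely continuous with respect to $m$ on $\mathbb T$: the singular parts coincide at once, and the multiplicity functions $N_U,N_{V_u}$ of the a.c.\ parts satisfy $N_U+n=N_{V_u}+n$ $m$-a.e. Here the hypothesis $n<\infty$ enters as the main obstacle: for finite $n$ one obtains pointwise cancellation, since at each $\zeta$ the values $N_U(\zeta)$ and $N_{V_u}(\zeta)$ are either both finite (genuine subtraction) or both infinite (trivially equal), whereas for $n=\infty$ the identity $\infty=\infty$ is vacuous. Thus $V_u\cong U$, and combining with $V_s\cong S_n$ yields $V\cong U\oplus S_n$.
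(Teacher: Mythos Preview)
Your argument is correct. The defect-space step for $k=n$ is essentially the paper's observation that $R\buildrel d\over\prec R_1$ forces $\dim\ker R_1^*\le\dim\ker R^*$, just phrased via quotients.

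Where you diverge from the paper is in identifying the unitary parts. You pass to minimal unitary extensions (unitary asymptotes of the isometries), use that $\buildrel d\over\prec$ transfers to these because the closure of the range of the induced intertwiner is reducing for a unitary and contains the image of the canonical map, obtain $U\oplus\widetilde S_n\cong V_u\oplus\widetilde S_n$ from two-sided $\buildrel d\over\prec$ between unitaries, and then cancel $\widetilde S_n$ via multiplicity functions, which is exactly where $n<\infty$ is needed. The paper instead works directly with the intertwiner $Y$ realizing $U\oplus S_n\buildrel d\over\prec V$: it sets $\mathcal M=\operatorname{clos}Y\mathcal K$ (with $\mathcal K$ the space of $U$), observes that $V|_{\mathcal M}$ is of class $C_{\cdot 1}$, hence $\mathcal M$ sits inside the unitary summand $\mathcal K_1$ of $V$ and is reducing there; then from $S_n\buildrel d\over\prec U_1|_{\mathcal K_1\ominus\mathcal M}\oplus S_n$ and finiteness of $n$ it forces $\mathcal K_1\ominus\mathcal M=\{0\}$ by a cyclic-multiplicity count, obtaining $U\buildrel d\over\prec U_1$ (and symmetrically $U_1\buildrel d\over\prec U$) without ever leaving the original spaces. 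Your route trades that hands-on invariant-subspace analysis for spectral theory; it is slightly heavier in machinery but cleaner in that the role of $n<\infty$ is completely transparent in the cancellation step, whereas in the paper it appears in the inequality $\mu_{U_1|_{\mathcal K_1\ominus\mathcal M}}+n\le n$.
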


\begin{proof} 
  Without loss of generality we can assume that $V=U_1\oplus S_k$, where $U_1$ is unitary and $0\leq k\leq\infty$.
It is easy to see that  if $R$ and $R_1$ are two operators such that $R\buildrel d \over\prec R_1$, then 
$\dim\ker R_1^*\leq \dim\ker R^*$.
Therefore  $k=n$. 
 
Denote by $\mathcal K$ and $\mathcal K_1$ the spaces on which $U$ and $U_1$ act, respectively, 
and by $Y$ a transformation which realizes the relation $U\oplus S_n\buildrel d \over\prec   V$.  
  Set $\mathcal M=\operatorname{clos}Y\mathcal K$. Since $U\buildrel d \over\prec V|_{\mathcal M}$, we have 
$V|_{\mathcal M}$ is of class  $C_{\cdot 1}$. Therefore, $\mathcal M\subset\mathcal  K_1$ and  
 $\mathcal M$ is a reducing invariant subspace for $U_1$. 
Furthermore, 
\begin{equation}\label{multn}S_n\buildrel d \over\prec U_1|_{\mathcal K_1\ominus \mathcal M}\oplus S_n.\end{equation} 
Relation \eqref{multn} implies that $U_1|_{\mathcal K_1\ominus \mathcal M}$ is an absolutely continuous unitary operator.  Therefore, 
 $$\mu_{U_1|_{\mathcal K_1\ominus \mathcal M}\oplus S_n}=\mu_{U_1|_{\mathcal K_1\ominus \mathcal M}}+n\leq n,$$
where the latest estimate follows from \eqref{multn}.  
Since $n<\infty$, we conclude that $\mu_{U_1|_{\mathcal K_1\ominus \mathcal M}}=0$. This means that $\mathcal M=\mathcal K_1$. Thus, $U\buildrel d \over\prec U_1$. Similarly, $U_1\buildrel d \over\prec U$.  By 
{\cite[Proposition II.3.4]{nfbk}} and {\cite[Proposition II.10.6 and Lemma II.13.6]{conway}}, $U\cong U_1$. 
\end{proof}

In the following simple lemmas we find  isometric asymptotes of operators intertwining with isometries 
under the assumption that they have isometric asymptotes.

\begin{lemma}\label{lemfinite} Suppose that $0\leq n<\infty$, $U$ is a unitary operator, and $T$ is an operator 
such that $U\oplus S_n\buildrel d \over\prec  T\buildrel d \over\prec  U\oplus S_n$. 
 If $T$ has an isometric asymptote $V$, then $V\cong U\oplus S_n$. 
\end{lemma}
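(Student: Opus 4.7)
The plan is to reduce the statement to Lemma \ref{lemd} by showing that $U \oplus S_n \buildrel d \over\prec V \buildrel d \over\prec U \oplus S_n$, and then invoking that lemma (since $V$ is an isometry and $U \oplus S_n$ is of the precise form it handles). Since $V$ is the isometric asymptote of $T$, one direction will come from the universal property and the other from composing with the canonical intertwining map $X$.

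First I would use the hypothesis $T \buildrel d \over\prec U \oplus S_n$: fix an intertwining $Y$ with dense range from $T$ to $U \oplus S_n$. Since $U \oplus S_n$ is itself an isometry, the universal property of the isometric asymptote $(X,V)$ of $T$ produces a transformation $Z$ with $Y = ZX$ and $Z V = (U \oplus S_n) Z$. Because $Y$ has dense range and factors through $Z$, the range of $Z$ is also dense; hence $V \buildrel d \over\prec U \oplus S_n$.

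Next I would use the hypothesis $U \oplus S_n \buildrel d \over\prec T$: fix $W$ with dense range intertwining $U \oplus S_n$ with $T$. Consider the composition $XW$, which satisfies $(XW)(U\oplus S_n) = X T W = V (XW)$, so $XW$ intertwines $U \oplus S_n$ with $V$. For the density, $W$ has dense range, $X$ is continuous, and $X$ has dense range (as the canonical intertwining map of an isometric asymptote); hence $\operatorname{clos} X(W(\mathcal{K})) \supset \operatorname{clos} X(\mathcal{H}) = \mathcal{H}_V$, where $\mathcal{K}$ and $\mathcal{H}$ are the spaces of $U \oplus S_n$ and $T$, respectively. Thus $U \oplus S_n \buildrel d \over\prec V$.

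Combining the two relations gives $U \oplus S_n \buildrel d \over\prec V \buildrel d \over\prec U \oplus S_n$, and Lemma \ref{lemd} immediately yields $V \cong U \oplus S_n$. No step looks like a serious obstacle; the only thing to be careful about is the density argument for $XW$, which relies crucially on $X$ already having dense range (a defining property of the canonical intertwining map for an isometric asymptote).
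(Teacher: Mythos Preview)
Your proof is correct and follows essentially the same approach as the paper's: establish $U\oplus S_n\buildrel d \over\prec V\buildrel d \over\prec U\oplus S_n$ and then invoke Lemma~\ref{lemd}. The paper is simply more terse, noting that $T\buildrel d \over\prec V$ (since the canonical map $X$ has dense range) and combining this with the hypothesis $U\oplus S_n\buildrel d \over\prec T$ to get the left-hand relation by transitivity of $\buildrel d\over\prec$, which is exactly what your composition $XW$ argument spells out in detail.
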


\begin{proof} It follows from the definition of an isometric asymptote that 
$T\buildrel d \over\prec   V\buildrel d \over\prec  U\oplus S_n$. 
Therefore, $n$, $U$, and $V$ satisfy the assumptions of Lemma \ref{lemd}.
\end{proof}

\begin{lemma}\label{lemshift} Suppose that $1\leq n<\infty$, $R$ is a contraction, and $T$ is an operator such that $R\prec T\prec S_n$. 
If $T$ has an isometric asymptote $V$, then $V\cong  S_n$. 
\end{lemma}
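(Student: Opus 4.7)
The plan is to show that $T\sim S_n$, so that the result follows from Lemma \ref{lemd} applied with the unitary operator $U$ acting on the zero space. First, composing the given quasiaffinities yields a quasiaffinity $XW$ intertwining $R$ with $S_n$, so $R\prec S_n$. The intertwining $(XW)R=S_n(XW)$ together with the fact that $S_n$ is an isometry gives $\|XWx\|=\|S_n^k(XWx)\|=\|(XW)R^kx\|\leq\|XW\|\,\|R^kx\|$, whence $\|R^kx\|\geq\|XWx\|/\|XW\|$ for every $k\geq 0$ and every $x$, so $R$ is of class $C_{1\cdot}$. The adjoint relation $R^*(XW)^*=(XW)^*S_n^*$, together with the facts that $R$ is a contraction, $(XW)^*$ has dense range, and $\|S_n^{*k}\xi\|\to 0$ for every $\xi$, shows that $R^{*k}\to 0$ strongly, so $R$ is of class $C_{\cdot 0}$. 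Thus $R$ is a contraction of class $C_{10}$.

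At this point, I invoke the classical theorem of Sz.-Nagy and Foias (see \cite{nfbk}) that a $C_{10}$ contraction which is a quasiaffine transform of $S_n$ with $n$ finite is in fact quasisimilar to $S_n$; this is the essential use of the hypothesis that $R$ is a contraction. Consequently $R\sim S_n$, and in particular $S_n\prec R$. Composing a quasiaffinity witnessing $S_n\prec R$ with $W$ gives a quasiaffinity $S_n\to T$, so $S_n\prec T$. Combined with the given $T\prec S_n$, this yields $T\sim S_n$.

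Finally, the universal property of the isometric asymptote $(Y,V)$ applied to $X$ produces a transformation $Z$ with $ZV=S_nZ$, $X=ZY$, and $Z$ of dense range, so $V\buildrel d \over\prec S_n$. Picking a quasiaffinity $W'$ witnessing $S_n\prec T$, the composition $YW'$ intertwines $S_n$ with $V$ (since $YW'S_n=YTW'=VYW'$) and has dense range because both $Y$ and $W'$ do; hence $S_n\buildrel d \over\prec V$. Lemma \ref{lemd} applied with $U$ the zero unitary on the trivial space now yields $V\cong S_n$. The main obstacle is the invocation of the Sz.-Nagy--Foias quasisimilarity theorem, without which the contraction hypothesis on $R$ provides no useful leverage; the rest of the argument is a formal application of universality together with the already-proven Lemma \ref{lemd}.
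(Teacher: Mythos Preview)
Your argument is correct, but it takes a longer detour than the paper's proof and leans on a slightly stronger black box than necessary.

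The paper does not establish $R\sim S_n$ or $T\sim S_n$. Instead it uses only that the isometric asymptote of the contraction $R$ is $S_n$ (this is what \cite[Lemma~2.1]{gam} or \cite[Proposition~9]{kersz} provide, once one observes $R\prec S_n$). From $R\prec T$ together with the canonical mapping $T\to V$ (which is injective here because it factors the injective map $T\to S_n$), one gets $R\prec V$; the universal property of the asymptote of $R$ then yields $S_n\buildrel d\over\prec V$ directly. Combined with $V\buildrel d\over\prec S_n$, Lemma~\ref{lemd} finishes the proof. Your route instead strengthens the cited input to $R\sim S_n$, deduces $T\sim S_n$, and only then produces $S_n\buildrel d\over\prec V$ --- logically sound, but one step further from the hypotheses.

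Two remarks on your black box. First, your derivation that $R$ is of class $C_{10}$ is correct but plays no role once you invoke the quasisimilarity result; the statement you actually need is that a contraction $R$ with $R\prec S_n$ (finite $n$) satisfies $S_n\prec R$, and this is Takahashi's theorem \cite[Corollary~3]{tak} (used elsewhere in the paper) rather than a classical Sz.-Nagy--Foias result. Second, the paper's weaker citation --- that the isometric asymptote of $R$ is $S_n$ --- already suffices and avoids the stronger quasisimilarity statement.
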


\begin{proof}
 Since $T\prec S_n$, we have $T\prec V\buildrel d \over\prec S_n$. 
By {\cite[Lemma 2.1]{gam}} or {\cite[Proposition 9]{kersz}}, the isometric asymptote of $R$ is $S_n$. Since $R\prec V$, we have 
$S_n\buildrel d \over\prec V$. Thus, $n$ and $V$ satisfy the assumption of  Lemma \ref{lemd}.   
\end{proof}

\begin{remark}\label{remshift} In Lemma \ref{lemshift}, $S_n$ cannot be replaced by $U\oplus S_n$ for an absolutely continuous unitary operator $U$ 
due to the relation $S_k\prec U$ for every $k\geq\mu_U$. 
\end{remark}

Clearly, if $T$ satisfies the assumptions of Theorem \ref{thmeigen}, then $T$ cannot be quasisimilar to an isometry. 
One of the purposes of this paper is to construct operators $T$ such that $T$ are quasisimilar to  isometries and do not have 
isometric asymptotes. This is done in Sec. 2, 3, 4. Namely, in Sec. 2 some general statements are proved which are used in Sec. 3
to construct examples of operators quasisimilar to cyclic unitaries with pure atomic spectrum and do not have unitary asymptotes. 
In Sec. 4 examples of operators $T$ such that $T\sim S$ and $T$ does not have isometric asymptote are constructed.  

 Another  purpose of this paper is to construct a contraction $T$ such that $T\sim S_\infty$ and 
an isometric asymptote of $T$ contains a (non-zero) unitary summand.  In particular, this shows that Lemmas \ref{lemfinite} and \ref{lemshift} cannot be generalized
to $n=\infty$. This is done in Sec. 5. (Actually, an operator $T$ 
which is similar to a contraction $T_1$ is constructed, but if $T\approx T_1$, 
then the isometries from the isometric asymptotes of $T$ and $T_1$ are unitarily equivalent.) 
Sec. 4 and Sec. 5 are independent from Sec. 2 and Sec. 3 and from each other.

\section{Operators quasisisimilar to unitaries}

Operators considered in this section are very close to ones considered in \cite{apostol} and 
\cite{ker87} (see also references in \cite{ker87}).  The difference is that existence of isometric (unitary) asymptotes of such operators is considered here. 

Suppose that $\{\mathcal K_{0n}\}_{n=0}^\infty$ and $\{U_{0n}\}_{n=0}^\infty$ are families of Hilbert 
spaces and reductive unitary operators such that $U_{0n}\in\mathcal L(\mathcal K_{0n})$ for all $n\geq 0$, and 
\begin{equation}\label{sing}\text{the scalar spectral measures of }U_{0n}\text{  are pairwise singular}, \ n\geq 0. 
\end{equation}
Suppose that $\mathcal H$ is a Hilbert space,  $T\in  \mathcal L(\mathcal H)$, and there exists 
 $\{0\}\neq\mathcal H_n\in\operatorname{Lat}T$ and  $X_n\in\mathcal L(\mathcal H_n, \mathcal K_{0n})$ such that 
\begin{equation}\label{main0}\mathcal H=\mathcal H_n\dotplus\vee_{k\neq n}\mathcal H_k\  \text{ for every  } n\geq 0,  \end{equation}
\begin{equation}\label{main1} X_n \text{  is invertible, and } X_nT|_{\mathcal H_n}=U_{0n}X_n\  \text{ for every  } n\geq 0. \end{equation}
Denote by $\mathcal Q_n$ the skew projection onto $\mathcal H_n$ parallel to $ \vee_{k\neq n}\mathcal H_k$. 
 Set $\mathcal K_0=\oplus_{n=0}^\infty\mathcal K_{0n}$ and $U_0=\oplus_{n=0}^\infty U_{0n}$. 

\begin{lemma}\label{lem1} Suppose that $\mathcal H$, $\mathcal K$ are Hilbert spaces,  $U\in\mathcal L(\mathcal K)$ is an isometry,  
  $T\in  \mathcal L(\mathcal H)$ satisfies \eqref{sing}, \eqref{main0} and \eqref{main1}, $X\in\mathcal L(\mathcal H,\mathcal K)$ 
is such that $XT=UX$. For $n\geq 0$, set $\mathcal K_n=\operatorname{clos}X\mathcal H_n$ and $U_n=U|_{\mathcal K_n}$. Then 
the following relations are fulfilled.
 \begin{enumerate}[\upshape (i)]
\item For every $n\geq 0$, $U_n$ is unitary, and there exists  
$\mathcal K_{1n}\in\operatorname{Lat}U_{0n}$ such that $U_n\cong U_{0n}|_{\mathcal K_{1n}}$.
\item The subspaces $\mathcal K_n$, $n\geq 0$, are pairwise orthogonal. 
\item $\operatorname{clos}X\mathcal H=\oplus_{n=0}^\infty \mathcal K_n$. 
\item $ \|X|_{\mathcal H_n}\mathcal Q_n\|\leq\|X\|$ for all $n\geq 0$.
\end{enumerate}
\end{lemma}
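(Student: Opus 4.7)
The plan is to deduce each clause from the intertwining $XT=UX$, the similarity in \eqref{main1}, and the singularity hypothesis \eqref{sing}, working through the four parts in the order (i), (ii), (iii), (iv).

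For (i), note first that $\mathcal K_n$ is automatically $U$-invariant, since $UX\mathcal H_n=XT\mathcal H_n\subseteq X\mathcal H_n$. The composite $Y_n:=X|_{\mathcal H_n}X_n^{-1}\in\mathcal L(\mathcal K_{0n},\mathcal K_n)$ has dense range and intertwines $U_{0n}$ with $U_n$. I would then invoke the standard observation that an isometry receiving a dense-range intertwiner from a unitary must itself be unitary (because $U_n(\operatorname{range}Y_n)=Y_n(\operatorname{range}U_{0n})=\operatorname{range}Y_n$ and $U_n$, being an isometry, preserves closures). To exhibit $\mathcal K_{1n}$, take the polar decomposition $Y_n=W_n|Y_n|$: a routine computation using $U_{0n}^*=U_{0n}^{-1}$ gives $|Y_n|U_{0n}=U_{0n}|Y_n|$, so $\mathcal K_{1n}:=\operatorname{clos}\operatorname{range}|Y_n|$ is reducing for $U_{0n}$, and $W_n$ restricts to a unitary from $\mathcal K_{1n}$ onto $\mathcal K_n$ implementing $U_{0n}|_{\mathcal K_{1n}}\cong U_n$. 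This polar-decomposition step is the only genuinely nontrivial point of the whole lemma.

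For (ii), since $U_n=U|_{\mathcal K_n}$ is unitary on the $U$-invariant subspace $\mathcal K_n$, the subspace $\mathcal K_n$ is in fact reducing for $U$; hence $P_{\mathcal K_k}$ commutes with $U$, and $P_{\mathcal K_k}|_{\mathcal K_n}\colon \mathcal K_n\to\mathcal K_k$ intertwines $U_n$ with $U_k$. By (i) the scalar spectral measure of $U_n$ is absolutely continuous with respect to that of $U_{0n}$, so \eqref{sing} makes the spectral measures of $U_n$ and $U_k$ mutually singular for $n\neq k$; any intertwiner between such unitaries vanishes, giving $\mathcal K_n\perp\mathcal K_k$. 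Claim (iii) is then immediate: $\oplus_n\mathcal K_n\subseteq\operatorname{clos}X\mathcal H$ is obvious from (ii), and the reverse inclusion follows because \eqref{main0} implies the algebraic sum $\sum_n\mathcal H_n$ is dense in $\mathcal H$, so continuity of $X$ places $X\mathcal H$ inside $\operatorname{clos}X(\sum_n\mathcal H_n)\subseteq\bigoplus_n\mathcal K_n$.

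For (iv), given $h\in\mathcal H$, write $h=\mathcal Q_n h+(I-\mathcal Q_n)h$ with the first summand in $\mathcal H_n$ and the second in $\vee_{k\neq n}\mathcal H_k$. Applying $X$ and using the argument of (iii) to $\vee_{k\neq n}\mathcal H_k$ together with the orthogonality from (ii) gives the orthogonal decomposition $Xh=X\mathcal Q_n h+X(I-\mathcal Q_n)h\in\mathcal K_n\oplus\bigoplus_{k\neq n}\mathcal K_k$. Pythagoras then yields $\|X\mathcal Q_n h\|\leq\|Xh\|\leq\|X\|\,\|h\|$, which is (iv).
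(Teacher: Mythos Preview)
Your argument is correct and follows the same route as the paper's proof: both reduce (i) to the relation $U_{0n}\buildrel d\over\prec U_n$ coming from $Y_n=X|_{\mathcal H_n}X_n^{-1}$, deduce (ii) from the pairwise singularity of the spectral measures of the $U_n$, and obtain (iii) and (iv) from (ii) exactly as you do. The only difference is in presentation: for (i) the paper simply records the chain $U_{0n}\approx T|_{\mathcal H_n}\buildrel d\over\prec U_n$ and leaves the conclusion to standard facts about isometries and unitaries, whereas you unpack those facts explicitly via the polar decomposition of $Y_n$; this is not a different approach, just a more self-contained execution of the same one.
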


\begin{proof} The relation (i) follows from the relations 
$$ U_{0n}\approx T|_{\mathcal H_n} \buildrel d\over\prec  U_n.$$ 
Since $U_n$ are unitaries and their  spectral measures are pairwise singular,  (ii) is fulfilled. Equality (iii) is a consequence of (ii).  Also by (ii),  $$X\mathcal H_n\perp X(\vee_{k\neq n}\mathcal H_k) \text{ for all } n\geq 0.$$
Let $x\in\mathcal H$. Then $x=\mathcal Q_n x + (I_{\mathcal H}-\mathcal Q_n) x$, and
\begin{equation*}\begin{aligned}\|X|_{\mathcal H_n}\mathcal Q_n x\|^2&\leq \|X|_{\mathcal H_n}\mathcal Q_n x\|^2+
\|X|_{\vee_{k\neq n}\mathcal H_k }(I_{\mathcal H}-\mathcal Q_n) x\|^2 \\&
=\|Xx\|^2\leq\|X\|^2\|x\|^2. \end{aligned}\end{equation*}
Estimate (iv) is proved.
 \end{proof}

\begin{lemma}\label{lem2new} Suppose that $\mathcal H$ is a Hilbert space, and   
  $T\in  \mathcal L(\mathcal H)$ satisfies \eqref{main0} and \eqref{main1}. 
Let  $\{\alpha_n\}_{n=0}^\infty$ be such that $\alpha_n>0$ for all $n\geq 0$ and 
$\sum_{n=0}^\infty \alpha_n^2\|\mathcal Q_n\|^2\|X_n\|^2<\infty$. Define 
a  linear mapping  $X\colon\mathcal H\to \mathcal K_0$ by the formula $Xx_n=\alpha_n X_n x_n$, $x_n\in \mathcal H_n$, $n\geq 0$. 
Then $X$ can be  extended on $\mathcal H$ to a bounded operator such that $XT=U_0X$. 
\end{lemma}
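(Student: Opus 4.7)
The plan is to define $X$ first on the algebraic sum $\sum_{n\geq 0}\mathcal H_n$ by linearly extending the prescription $Xx_n=\alpha_n X_n x_n$ for $x_n\in\mathcal H_n$, then to bound this definition, and finally to extend by continuity. Well-definedness on the algebraic sum is immediate from \eqref{main0}: if $x_{n_1}+\cdots+x_{n_k}=0$ with $x_{n_i}\in\mathcal H_{n_i}$ for distinct indices, then the condition $\mathcal H_{n_i}\cap\vee_{k\neq n_i}\mathcal H_k=\{0\}$ (applied to each index in turn) forces every $x_{n_i}$ to vanish.

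Next I would estimate $\|Xx\|$ on a finite sum $x=\sum_{i=1}^N x_{n_i}$ with $x_{n_i}\in\mathcal H_{n_i}$ and distinct $n_i$. Since $\mathcal K_0=\oplus_n\mathcal K_{0n}$ is an orthogonal direct sum, the components $\alpha_{n_i}X_{n_i}x_{n_i}$ lie in mutually orthogonal subspaces of $\mathcal K_0$, so
\begin{equation*}
\|Xx\|^2=\sum_{i=1}^N\alpha_{n_i}^2\|X_{n_i}x_{n_i}\|^2\leq\sum_{i=1}^N\alpha_{n_i}^2\|X_{n_i}\|^2\|x_{n_i}\|^2.
\end{equation*}
Because each component is recovered as $x_{n_i}=\mathcal Q_{n_i}x$, we have $\|x_{n_i}\|\leq\|\mathcal Q_{n_i}\|\|x\|$, and the summability hypothesis yields $\|Xx\|^2\leq C\|x\|^2$ with $C=\sum_{n\geq 0}\alpha_n^2\|\mathcal Q_n\|^2\|X_n\|^2<\infty$. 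Condition \eqref{main0} gives $\mathcal H=\mathcal H_0\dotplus\vee_{k\geq 1}\mathcal H_k$, whence $\sum_n\mathcal H_n$ is dense in $\mathcal H$, and $X$ extends uniquely by continuity to a bounded operator on $\mathcal H$ satisfying the same norm bound.

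Finally, the intertwining $XT=U_0 X$ is verified on each $\mathcal H_n$ and then propagated by linearity and continuity. For $x_n\in\mathcal H_n$, invariance $T\mathcal H_n\subset\mathcal H_n$ together with \eqref{main1} gives $XTx_n=\alpha_n X_n Tx_n=\alpha_n U_{0n}X_n x_n=U_0(\alpha_n X_n x_n)=U_0 Xx_n$, since $U_0$ acts as $U_{0n}$ on $\mathcal K_{0n}$. I do not anticipate any real obstacle: the summability hypothesis is tailored precisely to make the Pythagorean estimate work, and \eqref{main0} supplies both the algebraic well-definedness and the density of the domain of definition. It is worth noting that \eqref{sing} is not used here; only the formal orthogonality between the summands of the target $\mathcal K_0$ enters the estimate.
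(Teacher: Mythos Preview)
Your proof is correct and follows essentially the same route as the paper's: both use the orthogonality of the $\mathcal K_{0n}$ to get the Pythagorean identity $\|Xx\|^2=\sum_n\alpha_n^2\|X_nx_n\|^2$, then bound each $\|x_n\|$ by $\|\mathcal Q_n\|\|x\|$ and invoke the summability hypothesis. Your write-up is simply more explicit about well-definedness, density, and the intertwining verification, and your remark that \eqref{sing} is not needed here is accurate.
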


\begin{proof}  
Let $\{x_n\}_n$ be a finite family such that $x_n\in\mathcal H_n$ for all $n\geq 0$. Then $\mathcal Q_n\sum_k x_k=x_n$ for all $n\geq 0$. 
Therefore, $\|x_n\|\leq\|\mathcal Q_n\|\|\sum_k x_k\|$. 
The boundedness of $X$ follows from the estimate
\begin{equation*}\begin{aligned} \Bigl \|X\sum_n x_n\Bigr \|^2&=\sum_n\| \alpha_n X_n x_n\|^2\leq \sum_n\alpha_n^2\| X_n\|^2\| x_n\|^2\\& \leq 
\sum_n\alpha_n^2\| X_n\|^2\|\mathcal Q_n\|^2\Bigl \|\sum_k x_k\Bigr \|^2. \end{aligned}\end{equation*}
The intertwining relation $XT=U_0X$ follows from the definition of $X$.
\end{proof}

\begin{proposition} \label{thm1} Suppose that $\mathcal H$,  $\mathcal K$ are Hilbert spaces,  $U\in\mathcal L(\mathcal K)$ is an isometry, 
  $T\in  \mathcal L(\mathcal H)$ satisfies  \eqref{sing}, \eqref{main0} and \eqref{main1}, and   
  $(X_T,U)$ is an isometric asymptote of $T$. 
Set 
\begin{equation}\label{deltan} \delta_n=\inf_{0\neq x\in\mathcal H_n}\frac{\|X_Tx\|}{\|x\|}, \ \ \ n\geq 0.\end{equation}
Then $\delta_n>0$ for all $n\geq 0$, $\mathcal K =\oplus_{n=0}^\infty X_T\mathcal H_n$, $U$ is unitary, 
and $U|_{X_T\mathcal H_n}\cong U_{0n}$ for all $n\geq 0$.
\end{proposition}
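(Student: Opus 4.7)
The heart of the statement is the positivity $\delta_n>0$; once that is in hand, the orthogonal decomposition, the unitarity of $U$, and the isomorphism $U|_{X_T\mathcal H_n}\cong U_{0n}$ all follow quickly from Lemma~\ref{lem1} together with the standard fact that similar unitaries are unitarily equivalent. My plan is therefore to build a second, \emph{explicit} intertwiner from $\mathcal H$ into $\mathcal K_0=\oplus_n\mathcal K_{0n}$ by means of Lemma~\ref{lem2new}, factor it through $X_T$ using the universal property of the isometric asymptote, and read off the bound on $\|X_Tx\|/\|x\|$ for $x\in\mathcal H_n$ from this factorisation.

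Concretely, fix any sequence of positive reals $\alpha_n$ with $\sum_n\alpha_n^2\|\mathcal Q_n\|^2\|X_n\|^2<\infty$; for instance $\alpha_n=(2^n\|\mathcal Q_n\|\|X_n\|)^{-1}$. Lemma~\ref{lem2new} then produces a bounded operator $X\colon\mathcal H\to\mathcal K_0$ with $Xx_n=\alpha_nX_nx_n$ for $x_n\in\mathcal H_n$ and $XT=U_0X$. Since $U_0=\oplus_nU_{0n}$ is a unitary operator, in particular an isometry, the universal property of the isometric asymptote $(X_T,U)$ yields a bounded $Z\colon\mathcal K\to\mathcal K_0$ with $ZU=U_0Z$ and $X=ZX_T$. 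Applied to $x\in\mathcal H_n$ this gives
\[
\alpha_n\|X_n^{-1}\|^{-1}\|x\|\le\alpha_n\|X_nx\|=\|ZX_Tx\|\le\|Z\|\,\|X_Tx\|,
\]
so $\delta_n\ge\alpha_n/(\|Z\|\,\|X_n^{-1}\|)>0$.

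Consequently $X_T|_{\mathcal H_n}$ is bounded below, hence $X_T\mathcal H_n$ is closed and coincides with $\operatorname{clos}X_T\mathcal H_n$. Combining this with Lemma~\ref{lem1}(iii) and the minimality relation $\operatorname{clos}X_T\mathcal H=\mathcal K$ built into the isometric asymptote yields $\mathcal K=\oplus_{n=0}^\infty X_T\mathcal H_n$. Moreover, $X_T|_{\mathcal H_n}$ is a bounded bijection from $\mathcal H_n$ onto $X_T\mathcal H_n$ which intertwines $T|_{\mathcal H_n}$ with $U|_{X_T\mathcal H_n}$, so $U|_{X_T\mathcal H_n}\approx T|_{\mathcal H_n}\approx U_{0n}$. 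By Lemma~\ref{lem1}(i) the operator $U|_{X_T\mathcal H_n}$ is unitary, and similar unitaries are unitarily equivalent (via the rigidity $U\prec U_1\prec U\Rightarrow U\cong U_1$ recalled in the Introduction), giving $U|_{X_T\mathcal H_n}\cong U_{0n}$. Finally, each summand $X_T\mathcal H_n$ is $U$-invariant with $U$ acting unitarily on it, hence reduces the isometry $U$; since these reducing summands exhaust $\mathcal K$, $U$ itself must be unitary.

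The only mild obstacle is the bookkeeping around the explicit $X$: the norms $\|\mathcal Q_n\|$, $\|X_n\|$, $\|X_n^{-1}\|$ need not be uniformly controlled, but Lemma~\ref{lem2new} already absorbs the first two into its summability hypothesis, and $\|X_n^{-1}\|$ enters afterwards only through the final lower estimate, so no genuine difficulty arises.
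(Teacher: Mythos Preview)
Your argument is correct and follows essentially the same route as the paper: construct the auxiliary intertwiner $X$ via Lemma~\ref{lem2new}, factor $X=ZX_T$ through the universal property, deduce $\delta_n>0$ from $\alpha_n\|X_nx\|\le\|Z\|\,\|X_Tx\|$, and then read off closedness, the orthogonal decomposition, unitarity of $U$, and $U|_{X_T\mathcal H_n}\cong U_{0n}$. The only cosmetic difference is that the paper obtains the similarity $U|_{X_T\mathcal H_n}\approx U_{0n}$ directly from $Z|_{X_T\mathcal H_n}$ (since $ZX_T\mathcal H_n=\mathcal K_{0n}$), whereas you pass through $T|_{\mathcal H_n}$ using $X_T|_{\mathcal H_n}$ and $X_n$; both are equivalent.
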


\begin{proof} Let  $\{\alpha_n\}_{n=0}^\infty$ and $X$ be from Lemma \ref{lem2new}.  
 Since $XT=U_0X$, there exists $Z\in\mathcal L(\mathcal K,\mathcal K_0)$ 
such that $X=ZX_T$ and $ZU=U_0Z$. 
In particular, 
\begin{equation} \label{yyxxttn} ZX_T x=Xx=\alpha_n X_n x \text{ for every } x\in\mathcal H_n \ \text{ and every }
n\geq 0.\end{equation} 
Let  $n\geq 0$, and let $x\in\mathcal H_n$.  
By \eqref{yyxxttn},  
$$ \|Z\|\|X_T x\|\geq\|Xx\|=\alpha_n\| X_n x\|\geq\alpha_n\frac{\|x\|}{\|X_n^{-1}\|}.$$
Thus, $\delta_n>0$. 
Therefore, $X_T\mathcal H_n$ is closed. By Lemma \ref{lem1}(ii),  $X_T\mathcal H_n$, $n\geq 0$, are pairwise orthogonal. 
By \eqref{yyxxttn}, $ZX_T\mathcal H_n=\mathcal K_{0n}$. Therefore, 
$Z|_{X_T\mathcal H_n}$ realizes the relation $U|_{X_T\mathcal H_n}\approx U_{0n}$, which implies
  $U|_{X_T\mathcal H_n}\cong U_{0n}$ by {\cite[Proposition II.3.4]{nfbk}} or {\cite[Proposition II.10.6]{conway}}. 
It follows from minimality of unitary asymptote that $\mathcal K=\oplus _{n=0}^\infty X_T\mathcal H_n$.
\end{proof}

\begin{theorem}\label{thm15} Suppose that $\mathcal H$, $\mathcal K$ are Hilbert spaces,  $U\in\mathcal L(\mathcal K)$ is unitary, 
  $T\in  \mathcal L(\mathcal H)$ satisfies  \eqref{sing},  \eqref{main0}, and \eqref{main1}, and   
 $T$ has a unitary asymptote $(X_T,U)$. 
Suppose that \begin{equation} \label{xxnxxn1}\sup_{n\geq 0}\|X_n\|\|X_n^{-1}\|<\infty. \end{equation} 

Then \begin{equation} \label{deltanqqn} \inf_{n\geq 0} \delta_n\|\mathcal Q_n\|>0, \end{equation} 
where $\delta_n$ are defined by \eqref{deltan}.  
\end{theorem}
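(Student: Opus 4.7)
The plan is to argue by contradiction: suppose $\inf_{n\geq 0}\delta_n\|\mathcal Q_n\|=0$ and, passing to a subsequence, assume $\delta_{n_j}\|\mathcal Q_{n_j}\|\leq 4^{-j}$ for all $j\geq 1$. The idea is to bundle the ``bad'' intertwiners at all these indices into a single bounded intertwining of $T$ with one isometry, so that the universal property of the unitary asymptote $(X_T,U)$ produces one operator $Z$ whose norm has to simultaneously beat $2^j$ for every $j$, which is impossible.

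For each $j$, set
$$X_j=\frac{1}{\|X_{n_j}\|\,\|\mathcal Q_{n_j}\|}\,X_{n_j}\mathcal Q_{n_j}\colon\mathcal H\to\mathcal K_{0n_j}.$$
Because every $\mathcal H_k$ is $T$-invariant, $\mathcal Q_{n_j}$ commutes with $T$; combined with \eqref{main1} this gives $X_jT=U_{0n_j}X_j$, and plainly $\|X_j\|\leq 1$. Aggregate them as
$$X_\infty\colon\mathcal H\to\mathcal K_\infty=\oplus_{j\geq 1}\mathcal K_{0n_j},\qquad X_\infty x=\oplus_{j\geq 1}\,2^{-j}X_jx.$$
Since $\sum_{j\geq 1}4^{-j}<\infty$, $X_\infty$ is bounded and intertwines $T$ with the unitary $U_\infty=\oplus_{j\geq 1}U_{0n_j}$. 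The universal property of the unitary asymptote $(X_T,U)$ therefore produces a bounded $Z\colon\mathcal K\to\mathcal K_\infty$ with $ZX_T=X_\infty$ and $ZU=U_\infty Z$.

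The key observation is that for $x\in\mathcal H_{n_j}$ one has $\mathcal Q_{n_k}x=0$ for every $k\neq j$ (since $\mathcal H_{n_j}\subset\vee_{m\neq n_k}\mathcal H_m$), so $X_\infty x$ has only its $j$-th component nonzero. Writing $C=\sup_n\|X_n\|\|X_n^{-1}\|<\infty$ from \eqref{xxnxxn1} and using $\|X_{n_j}x\|\geq\|X_{n_j}\|\,\|x\|/C$, we get
$$\|X_\infty x\|=\frac{2^{-j}}{\|X_{n_j}\|\,\|\mathcal Q_{n_j}\|}\,\|X_{n_j}x\|\geq\frac{2^{-j}}{C\,\|\mathcal Q_{n_j}\|}\,\|x\|.$$
Combining with $\|X_\infty x\|\leq\|Z\|\,\|X_Tx\|$ and taking the infimum over unit vectors in $\mathcal H_{n_j}$ yields $\delta_{n_j}\|\mathcal Q_{n_j}\|\geq 2^{-j}/(C\|Z\|)$. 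Together with $\delta_{n_j}\|\mathcal Q_{n_j}\|\leq 4^{-j}$ this forces $C\|Z\|\geq 2^j$ for every $j$, contradicting the boundedness of $Z$.

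The main obstacle is precisely this bundling step. For a single fixed $n$ the natural construction yields an intertwining $X_n^{\sharp}=(\|X_n\|\|\mathcal Q_n\|)^{-1}X_n\mathcal Q_n$ and a corresponding factoring $Z_n$, but the universal property gives no uniform control of $\|Z_n\|$ in $n$, so handling indices one at a time cannot produce a uniform estimate \eqref{deltanqqn}. One must pool infinitely many bad indices into one intertwining into a single Hilbert space so that the same $\|Z\|$ governs all the lower bounds simultaneously; the orthogonality $\mathcal Q_{n_k}\mathcal H_{n_j}=\{0\}$ for $k\neq j$ coming from \eqref{main0}, together with the commutation $\mathcal Q_{n_j}T=T\mathcal Q_{n_j}$, are exactly what make this pooling legitimate and let assumption \eqref{xxnxxn1} convert into the needed lower bound.
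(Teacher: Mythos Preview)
Your proof is correct and is genuinely simpler than the paper's argument. Both proofs proceed by contradiction and exploit the universal property of $(X_T,U)$ to show that a certain intertwiner $X$ (your $X_\infty$) cannot factor as $ZX_T$ with $Z$ bounded. The difference lies in how that intertwiner is built.

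The paper first invokes Proposition~\ref{thm1} to identify $\mathcal K$ with $\mathcal K_0$, then picks witness vectors $x_{n_k}\in\mathcal H_{n_k}$ and passes to cyclic subspaces $\mathcal M_k$. Using the reductivity of $U_{0n_k}$, the restrictions $U_{0n_k}|_{X_T\mathcal M_k}$ and $U_{0n_k}|_{X_{n_k}\mathcal M_k}$ are modeled as multiplication on $L^2(\mu_k)$, and the induced intertwiner is represented by some $\varphi_k\in L^\infty(\mu_k)$. Level sets $\tau_k$ where $|\varphi_k|$ is small are then used to carve out subspaces $\mathcal M_{1k_j}$ on which $\|X_T|_{\mathcal M_{1k_j}}\|\,\|\mathcal Q_{n_{k_j}}\|\to 0$; the contradictory intertwiner is obtained by amplifying $X_T$ by factors $A_j\to\infty$ on these subspaces while leaving it unchanged elsewhere.

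Your construction bypasses all of this spectral machinery: you simply take the scaled projections $X_j=(\|X_{n_j}\|\,\|\mathcal Q_{n_j}\|)^{-1}X_{n_j}\mathcal Q_{n_j}$ and bundle them as an $\ell^2$-weighted direct sum. The structural facts you need---that $\mathcal Q_n$ commutes with $T$ and that $\mathcal Q_{n_k}|_{\mathcal H_{n_j}}=0$ for $k\neq j$---follow immediately from \eqref{main0} and \eqref{main1}, and hypothesis \eqref{xxnxxn1} converts directly into the lower bound without any detour through $L^\infty$ multipliers. In particular, your argument never uses the reductivity of the $U_{0n}$, whereas the paper's does.

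One minor point worth making explicit: the indices $n_j$ in your subsequence must be pairwise distinct for the claim $\mathcal Q_{n_k}x=0$ ($k\neq j$, $x\in\mathcal H_{n_j}$) to hold. This is available because Proposition~\ref{thm1} gives $\delta_n>0$, hence $\delta_n\|\mathcal Q_n\|>0$ for each individual $n$, so $\inf_n\delta_n\|\mathcal Q_n\|=0$ forces infinitely many distinct indices with $\delta_n\|\mathcal Q_n\|$ arbitrarily small.
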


\begin{proof} Taking into account Proposition \ref{thm1}, we can assume that $\mathcal K=\mathcal K_0$ and $U=U_0$. 
Then $X_T\mathcal H_n=\mathcal K_{0n}$.

Assume that $\inf_{n\geq 0} \delta_n\|\mathcal Q_n\|=0$. Then there exist a sequence $\{n_k\}_k$ and vectors 
$0\neq x_{n_k}\in\mathcal H_{n_k}$ such that 
\begin{equation} \label{assumcontr}\lim_k\|\mathcal Q_{n_k}\|\frac{\|X_Tx_{n_k}\|}{\|x_{n_k}\|}=0.\end{equation}
Set $\mathcal M_k=\vee_{l\geq 0}T^lx_{n_k}$. By Proposition \ref{thm1},  $X_T|_{\mathcal H_{n_k}}$ is invertible.  
Therefore,  $X_{n_k}\mathcal M_k$ and  $X_T\mathcal M_k$ are cyclic invariant subspaces for $U_{0n_k}$.  
 Both $U_{0n_k}|_{X_{n_k}\mathcal M_k}$ and $U_{0n_k}|_{X_T\mathcal M_k}$ are similar to $T|_{\mathcal M_k}$ and are unitary, 
since $U_{0n_k}$ is reductive by assumption. 
Therefore, for every $k$ there exist a finite positive Borel measure $\mu_k$ on $\mathbb T$ and   unitary transformations  
$$ W_{1k}\in\mathcal L(X_T\mathcal M_k,L^2(\mu_k)) \text{ and } W_{2k}\in\mathcal L(X_{n_k}\mathcal M_k,L^2(\mu_k))$$
such that 
$$W_{1k}U_{0n_k}|_{X_T\mathcal M_k}=U_{\mu_k}W_{1k} \text{ and } W_{2k}U_{0n_k}|_{X_{n_k}\mathcal M_k}=U_{\mu_k}W_{2k},$$
where $U_{\mu_k}$ is the operator of multiplication by the independent variable on $L^2(\mu_k)$. 
Furthermore, there exists $\varphi_k\in L^\infty(\mu_k)$ such that 
$$ \varphi_k(U_{\mu_k})=W_{1k}X_T|_{\mathcal M_k}(X_{n_k}|_{\mathcal M_k})^{-1}W_{2k}^{-1}.$$
Set $f_k=W_{2k}X_{n_k}x_{n_k}$. Then
\begin{equation} \label{phik}\frac{\|X_Tx_{n_k}\|}{\|x_{n_k}\|}\geq \frac{\|\varphi_k f_k\|}{\|X_{n_k}^{-1}\|\|f_k\|}\geq 
 \frac{\mathop{\text{\rm ess\,inf}}|\varphi_k|}{\|X_{n_k}^{-1}\|}.\end{equation} 
Take a sequence $\{\varepsilon_k\}_k$ such that $\varepsilon_k>0$ and 
$$ \varepsilon_k\|\mathcal Q_{n_k}\|\|X_{n_k}\|\to 0.$$
 It follows from \eqref{phik} that for every $k$ there exists a Borel set $\tau_k\subset\mathbb T$ such that $\mu_k(\tau_k)>0$ 
and $$|\varphi_k|\leq \|X_{n_k}^{-1}\|\frac{\|X_Tx_{n_k}\|}{\|x_{n_k}\|}+\varepsilon_k\ \ \mu_k\text{-a.e. on } \tau_k.$$
Set $\mathcal M_{1k}=X_{n_k}^{-1}W_{2k}^{-1}L^2(\tau_k,\mu_k)$. 
Since 
$$W_{1k}X_T|_{\mathcal M_{1k}}=\varphi_k(U_{\mu_k}|_{L^2(\tau_k,\mu_k)})W_{2k}X_{n_k}|_{\mathcal M_{1k}},$$
we have $$\|X_T|_{\mathcal M_{1k}}\|\leq\|\varphi_k|_{\tau_k}\|_\infty\|X_{n_k}\|\leq
\|X_{n_k}^{-1}\|\|X_{n_k}\|\frac{\|X_Tx_{n_k}\|}{\|x_{n_k}\|}+\varepsilon_k\|X_{n_k}\|.$$
Relations \eqref{xxnxxn1}, \eqref{assumcontr} and the choice of $\varepsilon_k$ imply
$$\|\mathcal Q_{n_k}\|\|X_T|_{\mathcal M_{1k}}\|\to 0.$$
There exist  sequences $\{k_j\}_j$ and $\{A_j\}_j$ such that $A_j>0$, $A_j\to\infty$, and 
\begin{equation} \label{aaj} \sum_j A_j^2 \|\mathcal Q_{n_{k_j}}\|^2\|X_T|_{\mathcal M_{1k_j}}\|^2<\infty.\end{equation}
Set $\mathcal N_j=X_{n_{k_j}}^{-1}(\mathcal K_{0n_{k_j}}\ominus X_{n_{k_j}}\mathcal M_{1k_j})$ and 
$\mathcal Q_{1j}=X_{n_{k_j}}^{-1} P_{X_{n_{k_j}}\mathcal M_{1k_j}}X_{n_{k_j}}$. Then 
 $\mathcal Q_{1j}$ is the skew  projection of $\mathcal H_{n_{k_j}}$ onto $\mathcal M_{1k_j}$ parallel to $\mathcal N_j$, 
and $\|\mathcal Q_{1j}\|\leq\|X_{n_{k_j}}^{-1}\|\|X_{n_{k_j}}\|$ for every $j$. 
 Note that $\mathcal N_j$, $\mathcal M_{1k_j}\in\operatorname{Lat}T|_{\mathcal H_{n_{k_j}}}$.

Define 
a linear mapping  $X\colon\mathcal H\to \mathcal K_0$ as follows. 
If $x\in\mathcal H_n$ with $n\neq n_{k_j}$ for all $j$, then $Xx=X_Tx$. 
If $u\in \mathcal M_{1k_j}$ for some $j$,  then $Xu= A_jX_Tu$. If $v\in\mathcal N_j$ for some $j$,   then $Xv=X_Tv$. 
Let $\{x_n\}_n$ be a finite family such that $x_n\in\mathcal H_n$ for all $n$. If $n=n_{k_j}$ for some $j$, then 
$x_n=u_j+v_j$, where $u_j\in\mathcal M_{1k_j}$ and $v_j\in\mathcal N_j$. 
Then $$(X -X_T)\Bigl(\sum_n x_n\Bigr)=\sum_j A_j X_T u_j \text{ and } u_j =
\mathcal Q_{1j}\mathcal Q_{n_{k_j}}\Bigl(\sum_n x_n\Bigr).$$
It follows from the estimate
\begin{equation*}\begin{aligned}\Bigl \|(X & -X_T)\Bigl(\sum_n x_n\Bigr)\Bigr \|^2 
 =\sum_j A_j^2\|X_T|_{\mathcal M_{1k_j}}\|^2\|u_j\|^2
\\ &
\leq \sum_j A_j^2\|X_T|_{\mathcal M_{1k_j}}\|^2\|\mathcal Q_{1j}\|^2\|\mathcal Q_{n_{k_j}}\|^2
\Bigl \|\sum_n x_n\Bigr \|^2
\\ & \leq
(\sup_{n\geq 0}\|X_n\|\|X_n^{-1}\|)^2\sum_j A_j^2\|X_T|_{\mathcal M_{1k_j}}\|^2\|\mathcal Q_{n_{k_j}}\|^2
\Bigl \|\sum_n x_n\Bigr \|^2\end{aligned}\end{equation*}
and relations \eqref{xxnxxn1} and \eqref{aaj} that $X-X_T$ can be  extended on $\mathcal H$ to a bounded operator. 
Consequently, $X$ can be  extended on $\mathcal H$ to a bounded operator. The relation $XT=U_0X$ follows from the definition of $X$. 
 Assume that $Z\in\mathcal L(\mathcal K_0)$ is such that $ZX_T=X$. Then $ZX_Tu_j=A_jX_Tu_j$ for all $u_j\in \mathcal M_{1k_j}$ and all $j$. 
Consequently, $\|Z\|\geq A_j$ for all $j$, which is impossible, because $A_j\to\infty$.

Thus, assuming that \eqref{assumcontr} is fulfilled, a contradiction is obtained. Therefore,  \eqref{deltanqqn} is proved.
\end{proof} 

\begin{theorem}\label{thm3}  Suppose that $\mathcal H$ is a Hilbert space,  
  $T\in  \mathcal L(\mathcal H)$ satisfies  \eqref{sing}, \eqref{main0} and \eqref{main1} 
 and for every (linear, bounded) transformation $X$ such that $XT=UX$ for some unitary $U$ 
\begin{equation} \label{xxnqqn} \sup_{n\geq 0}\|X|_{\mathcal H_n}\| \|\mathcal Q_n\|<\infty.\end{equation}
Suppose that there exists $X_T\in\mathcal L(\mathcal H,\mathcal K_0)$ such that $X_T T=U_0 X_T$ 
and \eqref{deltanqqn} is fulfilled (where $\delta_n$ are defined by \eqref{deltan}). 
Then 
$(X_T, U_0|_{\operatorname{clos}X_T\mathcal H})$ is a unitary asymptote of $T$.
\end{theorem}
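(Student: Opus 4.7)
The plan is: given an arbitrary pair $(Y,V)$ with $V$ unitary and $YT=VY$, construct a bounded $Z$ with $Y=ZX_T$ and $ZU_0|_{\operatorname{clos}X_T\mathcal H}=VZ$, and then verify uniqueness.

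First I would pin down the structure of the candidate asymptote. Applying Lemma~\ref{lem1} to the pair $(X_T,U_0)$ gives that the subspaces $\operatorname{clos}X_T\mathcal H_n$ are pairwise orthogonal with $U_0|_{\operatorname{clos}X_T\mathcal H_n}$ unitary, and $\operatorname{clos}X_T\mathcal H=\oplus_n\operatorname{clos}X_T\mathcal H_n$. Hypothesis \eqref{deltanqqn} forces $\delta_n>0$ for every $n$, so $X_T|_{\mathcal H_n}$ is bounded below and $X_T\mathcal H_n$ is already closed. Hence $\operatorname{clos}X_T\mathcal H$ is a reducing subspace of $U_0$, $U_0|_{\operatorname{clos}X_T\mathcal H}$ is unitary, and the minimality condition $\bigvee_{n\ge 0}(U_0|_{\operatorname{clos}X_T\mathcal H})^{-n}X_T\mathcal H=\operatorname{clos}X_T\mathcal H$ is automatic, since the right side contains $X_T\mathcal H$ and is reducing, so closed under $U_0^{-1}$.

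Next, given $(Y,V)$, I apply Lemma~\ref{lem1} also to $Y$ to get $\operatorname{clos}Y\mathcal H=\oplus_n\operatorname{clos}Y\mathcal H_n$ with each summand reducing for $V$. Since $X_T|_{\mathcal H_n}$ is invertible onto $X_T\mathcal H_n$, I define $Z_n\in\mathcal L(X_T\mathcal H_n,\operatorname{clos}Y\mathcal H_n)$ by $Z_n(X_Tx)=Yx$ for $x\in\mathcal H_n$. The estimate
$$\|Z_n\|\le\frac{\|Y|_{\mathcal H_n}\|}{\delta_n}=\frac{\|Y|_{\mathcal H_n}\|\|\mathcal Q_n\|}{\delta_n\|\mathcal Q_n\|}\le\frac{\sup_n\|Y|_{\mathcal H_n}\|\|\mathcal Q_n\|}{\inf_n\delta_n\|\mathcal Q_n\|}$$
combines \eqref{xxnqqn} applied to $Y$ (which intertwines $T$ with the unitary $V$) with hypothesis \eqref{deltanqqn} to give $\sup_n\|Z_n\|<\infty$. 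Then $Z$ defined on the orthogonal sum $\operatorname{clos}X_T\mathcal H=\oplus_n X_T\mathcal H_n$ by $Z|_{X_T\mathcal H_n}=Z_n$ extends to a bounded operator into $\oplus_n\operatorname{clos}Y\mathcal H_n\subset\mathcal K_V$.

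Finally, $ZX_T$ and $Y$ agree on each $\mathcal H_n$ by construction, hence on the algebraic sum $\sum_n\mathcal H_n$; this sum is dense in $\mathcal H$ because \eqref{main0} (with $n=0$) yields $\mathcal H=\mathcal H_0+\bigvee_{k\ge 1}\mathcal H_k\subset\bigvee_n\mathcal H_n$, so continuity gives $ZX_T=Y$ on all of $\mathcal H$. The intertwining $ZU_0=VZ$ on each $X_T\mathcal H_n$ is a direct computation from $X_TT=U_0X_T$ and $YT=VY$, and extends to $\operatorname{clos}X_T\mathcal H$ by continuity. Uniqueness of $Z$ follows from the minimality established above, since any $Z_1-Z_2$ satisfying the two conditions vanishes on $X_T\mathcal H$ and intertwines $U_0|_{\operatorname{clos}X_T\mathcal H}$ with $V$. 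The principal obstacle is the uniform norm estimate on $\|Z_n\|$: this is precisely where the two quantitative hypotheses \eqref{xxnqqn} and \eqref{deltanqqn} are invoked together, neither sufficing alone.
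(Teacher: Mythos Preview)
Your proof is correct and follows essentially the same approach as the paper's: both use Lemma~\ref{lem1} to obtain the orthogonal decomposition $\operatorname{clos}X_T\mathcal H=\oplus_n X_T\mathcal H_n$, define $Z_n=Y|_{\mathcal H_n}(X_T|_{\mathcal H_n})^{-1}$, and bound $\sup_n\|Z_n\|$ by combining \eqref{xxnqqn} with \eqref{deltanqqn} exactly as you do. The only cosmetic difference is that the paper first identifies $X_T\mathcal H_n$ with $\mathcal K_{0n}$ (via the similarity $U_0|_{X_T\mathcal H_n}\approx U_{0n}$) so as to work on $\mathcal K_0$ itself, whereas you work directly on $\operatorname{clos}X_T\mathcal H$; this changes nothing of substance.
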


\begin{proof}  Set $\mathcal K_{0n}'=X_T\mathcal H_n$. It follows from \eqref{deltanqqn} that $\delta_n>0$ for every $n$. Therefore, 
$\mathcal K_{0n}'$ is  closed. Furthermore, $$U_0|_{\mathcal K_{0n}'}\approx T|_{\mathcal H_n} \approx U_{0n}.$$ 
Therefore, $U_0|_{\mathcal K_{0n}'}\cong U_{0n}$ ({\cite[Proposition II.3.4]{nfbk}} or 
{\cite[Proposition II.10.6]{conway}}). Combining these relations with Lemma \ref{lem1}, 
we conclude that 
$$U_0|_{\operatorname{clos}X_T\mathcal H}=\oplus_{n=0}^\infty U_0|_{\mathcal K_{0n}'}\cong\oplus_{n=0}^\infty U_{0n}.$$
Therefore, we can assume that  $\mathcal K_{0n}'= \mathcal K_{0n}$ and 
$\operatorname{clos}X_T\mathcal H=\mathcal K_0$.

Let $\mathcal K$ be a Hilbert space, let  $U\in\mathcal L(\mathcal K)$ be unitary, and let 
$X\in\mathcal L(\mathcal H,\mathcal K)$ be  such that $XT=UX$. 

Set $Z_n=X|_{\mathcal H_n}(X_T|_{\mathcal H_n})^{-1}$. Since $\|(X_T|_{\mathcal H_n})^{-1}\|=1/\delta_n$, we have 
$$\|Z_n\|\leq\|X|_{\mathcal H_n}\|\|\mathcal Q_n\|\frac{1}{\delta_n\|\mathcal Q_n\|}.$$
By \eqref{deltanqqn} and \eqref{xxnqqn}, $\sup_n\|Z_n\|<\infty$. 

 By Lemma \ref{lem1},  the spaces $\operatorname{clos}X\mathcal H_n$, $n\geq 0$, are pairwise orthogonal.  
Set $Z=\oplus_{n=0}^\infty Z_n$. Then $Z$ is bounded, $ZU_0=UZ$, and $X=ZX_T$.  The uniqueness of $Z$ follows from the minimality of $(X_T, U_0|_{\operatorname{clos}X_T\mathcal H})$. 
\end{proof}

\begin{theorem}\label{thmmain1} Suppose that $\mathcal H$ is a Hilbert space,  and 
  $T\in  \mathcal L(\mathcal H)$ satisfies   \eqref{sing}, \eqref{main0},    \eqref{main1},  \eqref{xxnxxn1}, and  
\begin{equation} \label{qqn} \sup_{n\geq 0} \|\mathcal Q_n\|<\infty.\end{equation}
Then
$T$ has a unitary asymptote if and only if 
\begin{equation}\label{iii}\begin{aligned}&\text{there exists } c>0  \text{  such that } c^2\sum_n\| x_n\|^2\leq
\Bigl \|\sum_n x_n\Bigr \|^2 \\ &
\text{ for every  finite family } \{x_n\}_n \text{  such that } x_n\in\mathcal H_n \text{ for all  }n. \end{aligned}\end{equation} 
\end{theorem}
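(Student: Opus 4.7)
The plan is to prove both implications by reducing to results already at hand: Theorem \ref{thm15} gives one direction essentially for free, while for the other direction I will construct a candidate canonical intertwining mapping by hand and invoke Theorem \ref{thm3}. The key technical observation in both halves is that the genuine (not merely skew) orthogonality of the subspaces $\mathcal K_{0n}$ coming from \eqref{sing} lets the one-sided bound \eqref{iii} control the boundedness of $X$ and the lower bound \eqref{deltanqqn} simultaneously.

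Assume first that $T$ has a unitary asymptote $(X_T,U)$. By Theorem \ref{thm15} applied under \eqref{xxnxxn1}, estimate \eqref{deltanqqn} is in force; combining it with \eqref{qqn} yields a uniform lower bound $\delta := \inf_n\delta_n>0$. Lemma \ref{lem1}(ii) says the subspaces $X_T\mathcal H_n$ are pairwise orthogonal, so for every finite family $\{x_n\}_n$ with $x_n\in\mathcal H_n$,
\begin{equation*}
\delta^2\sum_n\|x_n\|^2 \leq \sum_n\|X_Tx_n\|^2 = \Bigl\|X_T\sum_n x_n\Bigr\|^2 \leq \|X_T\|^2\Bigl\|\sum_n x_n\Bigr\|^2,
\end{equation*}
which is \eqref{iii} with $c=\delta/\|X_T\|$.

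Conversely, assume \eqref{iii}. Replacing each $X_n$ by $X_n/\|X_n\|$ (which preserves both \eqref{main1} and \eqref{xxnxxn1}) I may assume $\|X_n\|=1$ for every $n$, and then $M:=\sup_n\|X_n^{-1}\|<\infty$ by \eqref{xxnxxn1}. Define $X\colon\mathcal H\to\mathcal K_0$ on finite sums $\sum_n x_n$ with $x_n\in\mathcal H_n$ by $X\sum_n x_n=\sum_n X_nx_n$. The orthogonality of the $\mathcal K_{0n}$'s in $\mathcal K_0=\oplus_n\mathcal K_{0n}$ combined with \eqref{iii} yields
\begin{equation*}
\Bigl\|X\sum_n x_n\Bigr\|^2 = \sum_n\|X_nx_n\|^2 \leq \sum_n\|x_n\|^2 \leq c^{-2}\Bigl\|\sum_n x_n\Bigr\|^2,
\end{equation*}
so $X$ extends by continuity to a bounded operator on $\mathcal H$ and the identity $XT=U_0X$ follows from \eqref{main1}. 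For this $X$ condition \eqref{deltanqqn} is immediate, since $\|Xx\|\geq\|x\|/M$ for $x\in\mathcal H_n$ gives $\delta_n\geq 1/M$, and $\|\mathcal Q_n\|\geq 1$ as the norm of any non-zero idempotent. Condition \eqref{xxnqqn} holds for every bounded $X'$ intertwining $T$ with a unitary operator because Lemma \ref{lem1}(iv) together with the identity $\mathcal Q_n|_{\mathcal H_n}=I_{\mathcal H_n}$ gives $\|X'|_{\mathcal H_n}\|\leq\|X'|_{\mathcal H_n}\mathcal Q_n\|\leq\|X'\|$, so $\sup_n\|X'|_{\mathcal H_n}\|\|\mathcal Q_n\|\leq\|X'\|\sup_n\|\mathcal Q_n\|<\infty$ by \eqref{qqn}. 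Theorem \ref{thm3} then produces the unitary asymptote $(X,U_0|_{\operatorname{clos}X\mathcal H})$ of $T$, and no serious obstacle is expected beyond the normalization step.
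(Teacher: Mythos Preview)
Your proof is correct and follows essentially the same approach as the paper: both directions invoke Theorem~\ref{thm15} and Lemma~\ref{lem1} for the forward implication, and construct the normalized intertwiner $X\sum_n x_n=\sum_n X_nx_n/\|X_n\|$ and verify the hypotheses of Theorem~\ref{thm3} for the converse. The only cosmetic differences are that you extract $\inf_n\delta_n>0$ from \eqref{deltanqqn} and \eqref{qqn} before estimating (the paper keeps the factor $\|\mathcal Q_n\|$ inside the sum), and that you spell out why \eqref{qqn} implies \eqref{xxnqqn} via Lemma~\ref{lem1}(iv), whereas the paper records this as the immediate consequence of $\|X'|_{\mathcal H_n}\|\leq\|X'\|$.
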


\begin{proof} Recall that $\delta_n$ are defined by \eqref{deltan}.  
Suppose that $X_T$ is a canonical intertwining mapping for $T$ and $U_0$ (see Proposition \ref{thm1}). 
Let $\{x_n\}_n$ be a finite family such that $x_n\in\mathcal H_n$ for all $n$. By Lemma \ref{lem1}, Theorem \ref{thm15}, and \eqref{qqn}, 
\begin{equation*}\begin{aligned} \|X_T\|^2\Bigl \|\sum_n  x_n\Bigr \|^2&\geq\Bigl \|X_T\Bigl(\sum_n  x_n\Bigr)\Bigr \|^2 = \sum_n \|X_T x_n\|^2
\\&\geq 
\sum_n\delta_n^2\|\mathcal Q_n\|^2\frac{\|x_n\|^2}{\|\mathcal Q_n\|^2}
\geq\frac{(\inf_{n\geq 0}\delta_n\|\mathcal Q_n\|)^2}{(\sup_{n\geq 0}\|\mathcal Q_n\|)^2}\sum_n\|x_n\|^2.
\end{aligned}\end{equation*} 
Conversely, if \eqref{iii} is fulfilled, 
define $X_T\in\mathcal L(\mathcal H, \mathcal K_0)$ by the formula
$$X_Tx=\frac{1}{\|X_n\|}X_n x, \ \ \ x\in \mathcal H_n, \ \ \ n\geq 0.$$
Then $\|X_T\|\leq1/c$. Furthermore, 
$$\inf_{n\geq 0}\delta_n\|\mathcal Q_n\|\geq\inf_{n\geq 0}\delta_n=\frac{1}{\sup_{n\geq 0}\|X_n\|\|X_n^{-1}\|}>0.$$
Therefore, \eqref{deltanqqn} is fulfilled. Clearly, \eqref{qqn} implies \eqref{xxnqqn}. Consequently, $X_T$ is a canonical intertwining mapping for $T$ and $U_0$ by Theorem \ref{thm3}. 
\end{proof}

\begin{lemma}\label{lempower} Suppose that $\mathcal H$ is a Hilbert space, 
  $T\in  \mathcal L(\mathcal H)$ satisfies   \eqref{sing}, \eqref{main0},  \eqref{main1},   \eqref{xxnxxn1}, and $T$ is power bounded. 
Then \eqref{qqn} is fulfilled. 
\end{lemma}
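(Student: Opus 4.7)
The plan is to use the isometric asymptote guaranteed by power boundedness (the Banach limit construction) as a test intertwining, and combine the lower bound on $\|T^k x\|$ for $x\in\mathcal H_n$ (coming from the similarity $T|_{\mathcal H_n}\approx U_{0n}$ plus \eqref{xxnxxn1}) with the inequality \eqref{lim}-type $\|X_T|_{\mathcal H_n}\mathcal Q_n\|\leq\|X_T\|$ from Lemma~\ref{lem1}(iv).

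First, set $K=\sup_{n\in\mathbb N}\|T^n\|<\infty$ and $M=\sup_{n\geq 0}\|X_n\|\|X_n^{-1}\|<\infty$. Since $T$ is power bounded, the Banach-limit construction of \cite{ker89} (see the discussion preceding \eqref{lim}) produces an isometric asymptote $(X_T,V)$ with $\|X_Tx\|^2=\operatorname{LIM}_k\|T^kx\|^2$ for $x\in\mathcal H$; in particular $\|X_T\|\leq K$.

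Next, I obtain a uniform lower bound on $X_T$ restricted to each $\mathcal H_n$. For $x\in\mathcal H_n$, the intertwining $X_nT|_{\mathcal H_n}=U_{0n}X_n$ and the unitarity of $U_{0n}$ give
\begin{equation*}
\|X_nx\|=\|U_{0n}^kX_nx\|=\|X_nT^kx\|\leq\|X_n\|\|T^kx\|,
\end{equation*}
hence $\|T^kx\|\geq\|x\|/(\|X_n\|\|X_n^{-1}\|)\geq\|x\|/M$ for every $k\geq 0$. Applying the Banach limit yields $\|X_Tx\|\geq\|x\|/M$ for every $x\in\mathcal H_n$ and every $n\geq 0$.

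Finally, I combine this with Lemma~\ref{lem1}(iv), which applies to $(X_T,V)$ since $V$ is an isometry and $X_TT=VX_T$. For arbitrary $x\in\mathcal H$ and any $n\geq 0$, the vector $\mathcal Q_nx$ lies in $\mathcal H_n$, so
\begin{equation*}
\frac{\|\mathcal Q_nx\|}{M}\leq\|X_T\mathcal Q_nx\|=\|X_T|_{\mathcal H_n}\mathcal Q_nx\|\leq\|X_T|_{\mathcal H_n}\mathcal Q_n\|\,\|x\|\leq\|X_T\|\,\|x\|\leq K\|x\|.
\end{equation*}
Therefore $\|\mathcal Q_n\|\leq KM$ for every $n\geq 0$, which is \eqref{qqn}. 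There is no real obstacle here; the only subtlety is to notice that power boundedness gives a canonical $X_T$ satisfying both $\|X_T\|\leq K$ and a lower bound $\delta_n\geq 1/M$ that is uniform in $n$ thanks to \eqref{xxnxxn1}, after which Lemma~\ref{lem1}(iv) finishes the argument.
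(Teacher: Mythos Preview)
Your proof is correct and follows essentially the same approach as the paper: both use the Banach-limit isometric asymptote $X_T$ from \cite{ker89}, obtain the uniform lower bound $\|X_Tx\|\geq\|x\|/M$ for $x\in\mathcal H_n$ from the similarity $T|_{\mathcal H_n}\approx U_{0n}$ and \eqref{xxnxxn1}, and then combine this with Lemma~\ref{lem1}(iv) to conclude $\|\mathcal Q_n\|\leq\|X_T\|M$. The only cosmetic difference is that the paper writes the lower bound via $\liminf_k$ (as a lower estimate for the Banach limit) while you pass directly through $\operatorname{LIM}_k$, and you additionally record the explicit bound $\|X_T\|\leq K$.
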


\begin{proof} By \cite{ker89}, $T$ has an isometric asymptote, and the canonical intertwining mapping $X_T$ for $T$ can 
be constructed using a Banach limit.  
In particular, for every $n\geq 0$ and every  $x\in\mathcal H$ 
\begin{equation*}\begin{aligned}  \|X_T|_{\mathcal H_n}\mathcal Q_nx\|^2&\geq\liminf_k \|(T|_{\mathcal H_n})^k\mathcal Q_nx\|^2=
\liminf_k \|X_n^{-1}U_{0n}^kX_n\mathcal Q_nx\|^2\\&\geq 
\frac{\|\mathcal Q_nx\|^2}{\|X_n^{-1}\|^2\|X_n\|^2}.\end{aligned} \end{equation*} 
By  Lemma \ref{lem1}(iv), 
 $\|X_T|_{\mathcal H_n}\mathcal Q_nx\|\leq\|X_T\|\|x\|$. Thus, 
 $$\sup_{n\geq 0} \|\mathcal Q_n\|\leq\|X_T\|\sup_{n\geq 0} \|X_n^{-1}\|\|X_n\|. \qedhere$$
\end{proof}

\begin{proposition}\label{propstar} Suppose that $\mathcal H$ is a Hilbert space,  and 
  $T\in  \mathcal L(\mathcal H)$ satisfies   \eqref{sing}, \eqref{main0},  \eqref{main1},    \eqref{xxnxxn1}, and  \eqref{qqn}. 
Furthermore, suppose that 
 \begin{equation} \label{cap} \cap_{n=0}^\infty \vee_{k=n}^\infty\mathcal H_k=\{0\}.\end{equation}
Set $\mathcal H_n'=\mathcal H\ominus  \vee_{k\neq n}\mathcal H_k$, $n\geq 0$. 
Then $T^*$ satisfies \eqref{main0},   \eqref{main1},  \eqref{xxnxxn1}, and  \eqref{qqn} with $\{\mathcal H_n'\}_{n=0}^\infty$ 
and $\{U_{0n}^{-1}\}_{n=0}^\infty$. Moreover, the following are equivalent: 
\begin{enumerate}[\upshape (i)]
\item 
there exists $c'>0$ such that $c'^2\sum_n\|x_n'\|^2\leq\|\sum_nx_n'\|^2$
for every finite family $\{x_n'\}_n$  such that $x_n'\in\mathcal H_n'$ for all $n\geq 0$; 
\item 
there exists $C>0$ such that $\|\sum_nx_n\|^2\leq C^2\sum_n\|x_n\|^2$
for every finite family $\{x_n\}_n$  such that $x_n\in\mathcal H_n$ for all $n\geq 0$. \end{enumerate}
\end{proposition}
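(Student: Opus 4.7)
Set $\mathcal L_k = \vee_{l \neq k} \mathcal H_l$, so that $\ker \mathcal Q_k = \mathcal L_k$ and $\mathcal H_k' = \mathcal L_k^\perp$. The plan is to handle the two assertions in sequence, with a single technical lemma doing the heavy lifting in both: $\bigcap_k \mathcal L_k = \{0\}$, which is where \eqref{cap} enters.

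The technical lemma proceeds via the observation $\mathcal Q_i \mathcal Q_j = 0$ for $i \neq j$ (because $\operatorname{ran} \mathcal Q_j = \mathcal H_j \subset \mathcal L_i = \ker \mathcal Q_i$), making each partial sum $\mathcal P_n := \mathcal Q_0 + \cdots + \mathcal Q_n$ a bounded idempotent. Induction on $n$ identifies $\ker \mathcal P_n = \bigcap_{i \leq n} \mathcal L_i = \vee_{k \geq n+1} \mathcal H_k$: the base case is immediate, and the step follows because any $z \in (\vee_{k \geq n+1} \mathcal H_k) \cap \mathcal L_{n+1}$ can be approximated by finite sums $z_j = \sum_{k=n+1}^{N_j} a_k^{(j)}$, and $\mathcal Q_{n+1} z = 0$ forces $a_{n+1}^{(j)} \to 0$, placing the limit in $\vee_{k \geq n+2} \mathcal H_k$. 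Intersecting over $n$ and invoking \eqref{cap} then gives $\bigcap_k \mathcal L_k = \{0\}$, from which $\bigcap_{k \neq n} \mathcal L_k = \mathcal H_n$ follows by subtracting $\mathcal Q_n x$ and noting that $\mathcal Q_k \mathcal Q_n = 0$ for $k \neq n$. Taking orthogonal complements yields $\vee_{k \neq n} \mathcal H_k' = \mathcal H_n^\perp$, so the skew projection $\mathcal Q_n'$ for the $T^*$-decomposition coincides with $\mathcal Q_n^*$; this gives \eqref{main0} and \eqref{qqn} for $T^*$ with $\|\mathcal Q_n'\| = \|\mathcal Q_n\|$. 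The intertwiners come from $\Phi_n := P_{\mathcal H_n'}|_{\mathcal H_n} : \mathcal H_n \to \mathcal H_n'$, which is a bijection with $\Phi_n^{-1} = \mathcal Q_n|_{\mathcal H_n'}$; a quick computation in the ambient pairing shows $\Phi_n^* T^*|_{\mathcal H_n'} = T|_{\mathcal H_n}^* \Phi_n^*$, and since $T|_{\mathcal H_n}^* = X_n^* U_{0n}^{-1} (X_n^*)^{-1}$, the choice $X_n' := (X_n^*)^{-1} \Phi_n^*$ is an invertible intertwiner of $T^*|_{\mathcal H_n'}$ with $U_{0n}^{-1}$ satisfying $\|X_n'\|\|(X_n')^{-1}\| \leq \|\mathcal Q_n\|\|X_n\|\|X_n^{-1}\|$, which by \eqref{qqn} and \eqref{xxnxxn1} delivers \eqref{main1} and \eqref{xxnxxn1} for $T^*$.

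For the equivalence (i) $\Leftrightarrow$ (ii), observe that (ii) is exactly the boundedness of the assembly map $J((x_n)) = \sum_n x_n$ on the algebraic direct sum $\oplus_n \mathcal H_n$, so its adjoint $J^* y = (P_{\mathcal H_n} y)_n$ has the same norm. Given a finite family $\{x_n'\}$ with $x_n' \in \mathcal H_n'$, the fact that $\mathcal H_m' \perp \mathcal H_n$ for $m \neq n$ kills cross terms, leaving $\sum_n \|P_{\mathcal H_n} x_n'\|^2 \leq C^2 \|\sum_n x_n'\|^2$; combined with $\|x_n'\| \leq \|\mathcal Q_n\| \|P_{\mathcal H_n} x_n'\|$ (from $\|(\Phi_n^*)^{-1}\| \leq \|\mathcal Q_n\|$) and \eqref{qqn}, this gives (i) with $c' = 1/(C \sup_n \|\mathcal Q_n\|)$. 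Conversely, from (i), the bilinear identity $(\sum_n x_n, \sum_m y_m') = \sum_n (x_n, y_n')$ (the cross terms vanish again), Cauchy--Schwarz, and (i) together yield $|(\sum_n x_n, z)| \leq c'^{-1} (\sum_n \|x_n\|^2)^{1/2} \|z\|$ for every $z$ in the algebraic span of the $\mathcal H_m'$; since the technical lemma gives $(\vee_m \mathcal H_m')^\perp = \bigcap_m \mathcal L_m = \{0\}$, this span is dense in $\mathcal H$ and (ii) follows with $C = 1/c'$.

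The main obstacle I expect is the technical lemma $\bigcap_k \mathcal L_k = \{0\}$, not because the proof is difficult in isolation but because it is the single place where \eqref{cap} is actually used, and because it propagates into every subsequent step: it is what makes $\mathcal Q_n^*$ the correct skew projection in the first stage and what supplies the density needed for the (i) $\Rightarrow$ (ii) direction. The delicate point inside the lemma is the inductive identification $\bigcap_{i \leq n} \mathcal L_i = \vee_{k \geq n+1} \mathcal H_k$; once that is in place, everything else is formal duality bookkeeping built on the nilpotency $\mathcal Q_i \mathcal Q_j = 0$ for $i \neq j$.
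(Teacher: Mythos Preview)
Your proof is correct and follows essentially the same route as the paper. The paper asserts in one line that \eqref{main0} for $\{\mathcal H_n'\}$ is equivalent to \eqref{cap}, whereas you supply the details via the chain $\ker\mathcal P_n=\bigcap_{i\le n}\mathcal L_i=\vee_{k\ge n+1}\mathcal H_k$; both then identify $\mathcal Q_n^*$ as the skew projection and take the same intertwiner $X_n'=(X_n^{-1})^*P_{\mathcal H_n}|_{\mathcal H_n'}$ (your $(X_n^*)^{-1}\Phi_n^*$ is exactly this). For the equivalence (i)$\Leftrightarrow$(ii) the paper packages the duality via the operator identities $Y=\bigl((\oplus_n P_{\mathcal H_n}|_{\mathcal H_n'})Y'\bigr)^*$ and $Y'=(Y'_*)^*$, while you unwind the same adjoint relations explicitly through the bilinear identity and Cauchy--Schwarz; the density $\vee_m\mathcal H_m'=\mathcal H$ that you invoke for (i)$\Rightarrow$(ii) is the same fact the paper uses implicitly when treating $Y'$ as defined on all of $\mathcal H$.
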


\begin{proof}  It follows from   \eqref{main0} for  $\{\mathcal H_n\}_{n=0}^\infty$  that equality  
  \eqref{main0} for  $\{\mathcal H_n'\}_{n=0}^\infty$ is equivalent to \eqref{cap}.  
If   \eqref{main0}  and \eqref{cap} are fulfilled, then  $\mathcal Q_n^*$ is the skew projection onto $\mathcal H_n'$  parallel to $\vee_{k\neq n}\mathcal H_k'$. Thus, 
 relations \eqref{qqn} for $\{\mathcal H_n\}_{n=0}^\infty$ and $\{\mathcal H_n'\}_{n=0}^\infty$ are equivalent. 

Set $X_n'=(X_n^{-1})^*P_{\mathcal H_n}|_{\mathcal H_n'} $. Then 
$$X_n' T^*|_{\mathcal H_n'} = U_{0n}^{-1}X_n' .$$
Since $ (P_{\mathcal H_n}|_{\mathcal H_n'})^{-1}=\mathcal Q_n^*|_{\mathcal H_n}$, we have that  \eqref{main1} for $T^*$ is fulfilled, and   \eqref{xxnxxn1} for 
$\{X_n'\}_{n=0}^\infty$ follows from \eqref{xxnxxn1} for 
$\{X_n\}_{n=0}^\infty$ and  \eqref{qqn}. 

The equivalence of (i) and (ii) under \eqref{qqn} is well-known. For reader's convenience, we sketch the proof. 
Consider the mappings 
\begin{equation*}\begin{aligned}
 Y\colon\oplus_{n=0}^\infty \mathcal H_n\to\mathcal H, \ \ \ Y(\oplus_n x_n) =\sum_n x_n, \ \ \ x_n\in\mathcal H_n, \\ 
Y'\colon \mathcal H \to \oplus_{n=0}^\infty \mathcal H_n', \ \ \ Y'\sum_n x_n'=\oplus_n x_n' , \ \ \ x_n'\in\mathcal H_n', \text{ and } \\ 
 Y'_*\colon\oplus_{n=0}^\infty \mathcal H_n'\to\mathcal H, \ \ \ Y'_*(\oplus_n x_n') =\sum_n \mathcal Q_nx'_n, \ \ \ x_n'\in\mathcal H_n'. 
\end{aligned}\end{equation*}
The relations  (i) and (ii) are equivalent to the boundedness of $Y'$ and $Y$, respectively. 
If (i) is fulfilled, then  $Y'$ is bounded. Since $$((\oplus_{n=0}^\infty P_{\mathcal H_n}|_{\mathcal H_n'})Y')^* = Y,$$  
we have $Y$ is bounded. Consequently, (ii) is fulfilled. 
Conversely, suppose that (ii) is fulfilled. Then 
\begin{equation*}\begin{aligned} \|Y'_*\oplus_n x_n'\|^2 &=\|\sum_n \mathcal Q_nx'_n\|^2\leq C^2\sum_n \|\mathcal Q_nx'_n\|^2\leq  C^2 \sum_n \|\mathcal Q_n\|^2\|x'_n\|^2 \\ &
\leq C^2(\sup_{n\geq 0}\|\mathcal Q_n\|)^2 \sum_n \|x'_n\|^2.\end{aligned}\end{equation*}
Therefore, $Y'_*$ is bounded. Since $(Y'_*)^*=Y'$, we obtain that $Y'$ is bounded. Consequently, (i) is fulfilled. 
 \end{proof}

\begin{remark}\label{remapostol} If $T$ satisfies \eqref{main0}, \eqref{main1}, and \eqref{cap}, then $T\sim U_0$ by  \cite{apostol}. 
\end{remark}

\begin{corollary}\label{cor110} 
Suppose that $\mathcal H$ is a Hilbert space,  and 
  $T\in  \mathcal L(\mathcal H)$ satisfies   \eqref{sing}, \eqref{main0},  \eqref{main1},    \eqref{xxnxxn1}, \eqref{qqn}, and  \eqref{cap}.
If  $T$ and $T^*$ have unitary asymptotes, then $T\approx U_0$. 
\end{corollary}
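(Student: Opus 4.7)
The plan is to apply Theorem \ref{thmmain1} to both $T$ and $T^*$ and then verify that the canonical intertwining mapping $X_T$ produced by Theorem \ref{thmmain1} is in fact invertible, so that it realizes a similarity $T \approx U_0$.

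First, since $T$ satisfies the hypotheses of Theorem \ref{thmmain1} and has a unitary asymptote, condition \eqref{iii} holds: there is $c > 0$ with $c^2 \sum_n \|x_n\|^2 \leq \|\sum_n x_n\|^2$ for any finite family $x_n \in \mathcal H_n$. Next, by Proposition \ref{propstar}, $T^*$ satisfies \eqref{sing}, \eqref{main0}, \eqref{main1}, \eqref{xxnxxn1}, and \eqref{qqn} with respect to the families $\{\mathcal H_n'\}_{n=0}^\infty$ and $\{U_{0n}^{-1}\}_{n=0}^\infty$. Since $T^*$ has a unitary asymptote by assumption, a second application of Theorem \ref{thmmain1} yields condition (i) of Proposition \ref{propstar}, which in turn is equivalent to condition (ii): there is $C > 0$ such that $\|\sum_n x_n\|^2 \leq C^2 \sum_n \|x_n\|^2$ for all finite families with $x_n \in \mathcal H_n$. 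Combining the two estimates, the natural summation map $\oplus_n \mathcal H_n \to \mathcal H$ is bounded above and below.

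Now take $X_T \in \mathcal L(\mathcal H, \mathcal K_0)$ to be the canonical intertwining mapping constructed in the proof of Theorem \ref{thmmain1}, namely $X_T x = \|X_n\|^{-1} X_n x$ for $x \in \mathcal H_n$. By Proposition \ref{thm1}, the subspaces $X_T\mathcal H_n$, $n \geq 0$, are pairwise orthogonal in $\mathcal K_0$ and, after the identification made in the proof of Theorem \ref{thm3}, coincide with $\mathcal K_{0n}$. For a finite family $\{x_n\}_n$ with $x_n \in \mathcal H_n$, writing $M = \sup_{n \geq 0} \|X_n\|\|X_n^{-1}\| < \infty$,
\begin{equation*}
\Bigl\|X_T \sum_n x_n \Bigr\|^2 = \sum_n \|X_T x_n\|^2 \geq \sum_n \frac{\|x_n\|^2}{\|X_n\|^2 \|X_n^{-1}\|^2} \geq \frac{1}{M^2} \sum_n \|x_n\|^2 \geq \frac{1}{M^2 C^2} \Bigl\|\sum_n x_n\Bigr\|^2,
\end{equation*}
where the last inequality uses condition (ii) above. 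Hence $X_T$ is bounded below on the dense subspace of finite sums, and therefore on all of $\mathcal H$.

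Finally, surjectivity: since $X_T \mathcal H_n = \mathcal K_{0n}$ for every $n$, the range of $X_T$ contains the algebraic direct sum $\oplus_n^{\mathrm{alg}} \mathcal K_{0n}$, which is dense in $\mathcal K_0 = \oplus_{n=0}^\infty \mathcal K_{0n}$. Being bounded below, $X_T$ has closed range, so $X_T \mathcal H = \mathcal K_0$. Thus $X_T$ is invertible, and the relation $X_T T = U_0 X_T$ gives $T \approx U_0$. No step is a serious obstacle here; the work is essentially bookkeeping, assembling Theorem \ref{thmmain1}, Proposition \ref{propstar}, and Proposition \ref{thm1} in the correct order.
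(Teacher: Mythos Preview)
Your proof is correct and follows essentially the same approach as the paper: apply Theorem~\ref{thmmain1} to $T$ and to $T^*$ (via Proposition~\ref{propstar}) to obtain both the lower bound \eqref{iii} and the upper bound (ii) of Proposition~\ref{propstar}, then verify that the map $x\mapsto \|X_n\|^{-1}X_n x$ is an invertible intertwiner. The only difference is that you spell out the bounded-below and surjectivity arguments explicitly, whereas the paper condenses this into a single sentence; your references to Proposition~\ref{thm1} and Theorem~\ref{thm3} for the orthogonality and identification of $X_T\mathcal H_n$ with $\mathcal K_{0n}$ are unnecessary, since both follow immediately from the explicit formula for $X_T$ and the definition of $\mathcal K_0=\oplus_n\mathcal K_{0n}$.
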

\begin{proof} By Theorem \ref{thmmain1} applied to $T$, \eqref{iii} is fulfilled. By Proposition \ref{propstar}, $T^*$ 
satisfies the assumptions  of  Theorem \ref{thmmain1}. By Theorem \ref{thmmain1} and  Proposition \ref{propstar}, 
 relation (ii) in Proposition \ref{propstar}  is fulfilled. Define the mapping
$$X\colon\mathcal H\to\oplus_{n=0}^\infty \mathcal K_{0n}, \ \ \ X\sum_n x_n =\oplus_n\frac{X_n x_n}{\|X_n\|}, \ \ \ x_n\in\mathcal H_n.$$
By   \eqref{iii}, relation (ii) in Proposition \ref{propstar}, and \eqref{xxnxxn1}, $X$ realizes the relation  $T\approx U_0$. 
\end{proof}

\begin{corollary}\label{corpower} 
Suppose that $\mathcal H$ is a Hilbert space,  and 
  $T\in  \mathcal L(\mathcal H)$ is a power bounded operator which  satisfies   \eqref{sing}, \eqref{main0},  \eqref{main1},    \eqref{xxnxxn1},  and  \eqref{cap}.
Then $T\approx U_0$. 
\end{corollary}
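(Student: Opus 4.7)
The plan is to reduce Corollary \ref{corpower} to Corollary \ref{cor110} by producing the two missing hypotheses: condition \eqref{qqn} and the existence of unitary asymptotes for both $T$ and $T^*$. Once these are in place, Corollary \ref{cor110} delivers $T\approx U_0$ directly.

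First, I would observe that because $T$ is power bounded and satisfies \eqref{sing}, \eqref{main0}, \eqref{main1} and \eqref{xxnxxn1}, Lemma \ref{lempower} immediately gives $\sup_{n\geq 0}\|\mathcal Q_n\|<\infty$, i.e. condition \eqref{qqn}. Next, since $T$ is power bounded, the classical result of \cite{ker89} yields an isometric asymptote of $T$, and by the discussion in the introduction an isometric asymptote yields a unitary asymptote; thus $T$ has a unitary asymptote.

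For the adjoint, note that $T^*$ is also power bounded. Proposition \ref{propstar} (whose hypotheses are fulfilled because \eqref{cap} now holds together with \eqref{sing}, \eqref{main0}, \eqref{main1}, \eqref{xxnxxn1} and \eqref{qqn}) shows that $T^*$ satisfies \eqref{main0}, \eqref{main1}, \eqref{xxnxxn1} and \eqref{qqn} with the families $\{\mathcal H_n'\}_{n=0}^\infty$ and $\{U_{0n}^{-1}\}_{n=0}^\infty$; condition \eqref{sing} for $\{U_{0n}^{-1}\}$ is automatic from \eqref{sing} for $\{U_{0n}\}$ since the scalar spectral measures of $U_{0n}^{-1}$ are the pushforwards of those of $U_{0n}$ by $\zeta\mapsto\overline\zeta$, hence still pairwise singular. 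Again by \cite{ker89} applied to $T^*$, we obtain a unitary asymptote of $T^*$.

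Finally, all hypotheses of Corollary \ref{cor110} are met for $T$, so $T\approx U_0$ and the corollary is proved. There is no real obstacle here; the only thing to double-check is that the singularity condition \eqref{sing} transfers from $\{U_{0n}\}$ to $\{U_{0n}^{-1}\}$, which as noted above is routine.
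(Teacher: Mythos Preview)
Your proof is correct and follows essentially the same route as the paper: Lemma \ref{lempower} supplies \eqref{qqn}, \cite{ker89} supplies unitary asymptotes for both $T$ and $T^*$ (power boundedness alone suffices), and Corollary \ref{cor110} finishes. The detour through Proposition \ref{propstar} and the check of \eqref{sing} for $\{U_{0n}^{-1}\}$ are unnecessary---you only need that $T^*$ is power bounded to invoke \cite{ker89}, and Corollary \ref{cor110} is stated for $T$, not $T^*$---but they do no harm.
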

\begin{proof} The conclusion of the
 corollary follows from Lemma \ref{lempower}, Corollary \ref{cor110} and \cite{ker89}.  
\end{proof}

\begin{remark} Let $\{U_{0n}\}_{n=0}^\infty$ be a  family of unitary operators  satisfying  \eqref{sing}. 
Denote the scalar spectral measures of $U_{0n}$ by $\nu_n$, $n\geq 0$.  
 If there exists $n\geq 0$ such that  $U_{0n}$ is not reductive, then $m$ is absolutely continuous with respect to $\nu_n$ 
 (where $m$ is the Lebesgue measure on $\mathbb T$). Therefore,  $\nu_k$ is singular with respect to $m$ for every $k\geq 0$,  $k\neq n$.   Consequently, $U_{0k}$ are reductive for all $k\geq 0$,  $k\neq n$. By \eqref{main0} and \eqref{main1}, 
$T\approx U_{0n}\oplus T|_{\vee_{k\neq n}\mathcal H_k}$. By  {\cite[Proposition 13]{kerntuple}}, the existence of unitary asymptotes of $T$ and $T^*$ is equivalent to  the existence of unitary asymptotes of 
$T|_{\vee_{k\neq n}\mathcal H_k}$ and $(T|_{\vee_{k\neq n}\mathcal H_k})^*$, respectively.
\end{remark}

\section{Operators quasisimilar to cyclic unitaries with pure atomic spectrum} 

In this section, a simplest particular case of operators from the previous section is considered.  Namely, it is assumed that   $\dim\mathcal H_n=1$ for all $n\geq 0$, where the  spaces  $\mathcal H_n$ are from \eqref{main0}. 
Examples of operators which are  quasisimilar to  cyclic unitaries with pure atomic spectrum and does not have a unitary asymptote are given. 

Let $\mathcal H$ be a Hilbert space, and let 
$\{x_n\}_{n=0}^\infty\subset\mathcal H$ be such that $\vee_{n=0}^\infty x_n=\mathcal H$ and
 $x_n\neq 0$ for all $n\geq 0$. Set $\mathcal H_n=\mathbb C x_n$. 
 Relations  \eqref{sing} and \eqref{main1} mean that there exist
$\{\lambda_n\}_{n=0}^\infty\subset\mathbb T$ and  an orthonormal basis $\{e_n\}_{n=0}^\infty$ of $ \mathcal K_0$ 
such that $\lambda_n\neq\lambda_k$ for $n\neq k$, $n,k\geq 0$, 
$U_0\in\mathcal L(\mathcal K_0)$ acts by the formula $U_0e_n=\lambda_ne_n$, $n\geq 0$,  and 
$Tx_n=\lambda_nx_n$ for all $n\geq 0 $. Relation \eqref{xxnxxn1} is fulfilled automatically, 
and estimate \eqref{xxnqqn} is fulfilled by Lemma \ref{lem1}(iv). 

Relation \eqref{main0} is equivalent to the existence of a family $\{x'_n\}_{n=0}^\infty\subset \mathcal H$ such that 
\begin{equation}\label{main01} (x'_n,x_n)=1 \ \ \text{ and } 
  (x'_n,x_k)=0, \ \ \text{ if }  k\neq n, \ n,k\geq 0,\end{equation} 
which is equivalent that $\mathcal Q_n$ acts by the formula 
 \begin{equation}\label{qqndef} \mathcal Q_n=x_n\otimes x'_n \text{ for every }n\geq 0.\end{equation} 
Clearly, $\|\mathcal Q_n\|=\|x_n\|\|x'_n\|$. 
It is easy to see that $T^*x'_n=\overline\lambda_nx'_n$, $n\geq 0$. 
Under previous assumptions,  \eqref{cap} is equivalent to the relation
\begin{equation}\label{cap1}\mathcal H=\vee_{n\geq 0}x'_n. \end{equation} 
 
For $n\geq 0$, set
\begin{equation}\label{ppndef} \mathcal P_n=\sum_{k=0}^n \mathcal Q_k. \end{equation}  
Clearly, the relation  \begin{equation}\label{ppnest}\sup_{n\geq 0}\|\mathcal P_n\|<\infty \end{equation}
implies \eqref{qqn}. Furthermore, \eqref{ppnest} implies that 
 $$x=\lim_n\mathcal P_nx=\sum_{k\geq 0}(x,x'_k)x_k \ \ \text{ for every } x\in\mathcal H.$$
In particular, \eqref{cap1} follows from  \eqref{ppnest}. 
For references, see 
{\cite[Ch. VI.1,VI.3]{nik86}}, {\cite[Ch. I.A.5.1,  II.C.3.1]{nik02}}, 
or {\cite[Theorems I.6.1, I.7.1, p. 53, 58, and Proposition II.II.14.1, p. 427]{singer}}
 
The proof of the following known lemma is given for reader's convenience. 

\begin{lemma} \label{lemttbounded} Suppose that $\mathcal H$ is a Hilbert space,  
$\{x_n\}_{n=0}^\infty\subset\mathcal H$ is such that $\vee_{n=0}^\infty x_n=\mathcal H$ and
 $x_n\neq 0$ for all $n\geq 0$, \eqref{main01} and   \eqref{ppnest} are fulfilled.  
Let $0<\ldots <t_{n+1}<t_n<\ldots<t_0\leq 2\pi$. Set $\lambda_n=e^{it_n}$ and $Tx_n=\lambda_n x_n$, $n\geq 0$. 
Then the linear mapping $T$ can be boundedly extended on $\mathcal H$. 
\end{lemma}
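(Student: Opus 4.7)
The plan is to apply summation by parts to the partial sums $T_n := \sum_{k=0}^n \lambda_k \mathcal{Q}_k$ and combine the hypothesis \eqref{ppnest} with the monotonicity of the $t_k$.

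First, by the biorthogonality relations \eqref{main01} and the definition \eqref{qqndef}, one has $T_n x_j = \lambda_j x_j$ whenever $n \geq j$, so $T_n$ agrees with the prescribed action of $T$ on the dense subspace $\operatorname{span}\{x_j : j \geq 0\}$. Thus it suffices to show $\sup_n \|T_n\| < \infty$, for then a standard $\varepsilon/3$ argument (using the uniform bound on $\|T_n\|$, density, and the fact that $T_n x$ is eventually constant for $x$ in the span) shows $T_n$ converges in the strong operator topology to a bounded operator, which is then the desired extension of $T$.

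To bound $\|T_n\|$, write $\mathcal{Q}_k = \mathcal{P}_k - \mathcal{P}_{k-1}$ (with $\mathcal{P}_{-1} := 0$) and rearrange by Abel summation:
\[ T_n = \lambda_n \mathcal{P}_n + \sum_{k=0}^{n-1}(\lambda_k - \lambda_{k+1})\mathcal{P}_k. \]
The crucial estimate is $|\lambda_k - \lambda_{k+1}| = |e^{it_k} - e^{it_{k+1}}| \leq t_k - t_{k+1}$, where I use that $t_k > t_{k+1}$. By monotonicity the resulting sum telescopes:
\[ \sum_{k=0}^{n-1}|\lambda_k - \lambda_{k+1}| \leq t_0 - t_n \leq 2\pi. \]
Combined with $|\lambda_n| = 1$ and \eqref{ppnest} this yields $\|T_n\| \leq (1 + 2\pi)\sup_m \|\mathcal{P}_m\|$, uniformly in $n$.

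The substance of the argument is the Abel summation together with the telescoping bound on $\sum |\lambda_k - \lambda_{k+1}|$; this is the only step that uses the ordering hypothesis on $\{t_n\}$. Without monotonicity (or some bounded-variation condition on $\{\lambda_n\}$) the sum $\sum|\lambda_k-\lambda_{k+1}|$ could diverge and this approach would break down, so I expect that to be the one genuinely non-routine point. Everything else — the strong convergence extension and the identification of the limit with $T$ — is standard once the uniform bound is in hand.
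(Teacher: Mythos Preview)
Your proof is correct and follows essentially the same route as the paper's: both arguments hinge on the Abel summation identity $\sum_{k=0}^n \lambda_k\mathcal Q_k = \lambda_n\mathcal P_n + \sum_{k=0}^{n-1}(\lambda_k-\lambda_{k+1})\mathcal P_k$ together with the telescoping bound $\sum_k|\lambda_k-\lambda_{k+1}|\leq 2\pi$ coming from the monotonicity of $\{t_n\}$. The only cosmetic difference is that the paper works directly with $\|Tx\|$ for $x$ in a finite span (noting $Tx = T_Nx$ there), whereas you package the same estimate as a uniform bound on $\|T_n\|$ and then pass to the limit; the strong-convergence/$\varepsilon/3$ step is slightly more than needed, since $\|Tx\|\leq(\sup_n\|T_n\|)\|x\|$ already follows on the span from $Tx=T_nx$ for $n$ large.
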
 
\begin{proof}  Let $N<\infty$, and let $x\in\vee_{n=0}^N x_n$. 
We have  
$$Tx=\lambda_0\mathcal P_0 x + \sum_{k=1}^N \lambda_k(\mathcal P_k-\mathcal P_{k-1})x=
 \sum_{k=0}^{N-1}( \lambda_k-\lambda_{k+1})\mathcal P_k x +\lambda_N\mathcal P_N x.$$
Therefore, \begin{align*}\|Tx\|&\leq \sum_{k=0}^{N-1}|\lambda_k-\lambda_{k+1}|\|\mathcal P_k\|\| x \|+|\lambda_N|\|\mathcal P_N\|\| x\|\\&
\leq\sup_{n\geq 0}\|\mathcal P_n\|\Bigl(\sum_{k=0}^{N-1}|\lambda_k-\lambda_{k+1}|+|\lambda_N|\Bigr)\| x\|\\&\leq
(2\pi+1)\sup_{n\geq 0}\|\mathcal P_n\|\| x\|. \qedhere\end{align*}
\end{proof}

The family  $\{x_n\}_{n=0}^\infty$ is called a \emph{Riesz basis} of $\mathcal H$, if a linear mapping $W$ acting by the formula $We_n=x_n$ for an orthonormal basis $\{e_n\}_{n=0}^\infty$ is  bounded and has bounded inverse. 

Before the formulation of Lemma \ref{lempsi} we recall some definitions and statements. 

Let $v\in L^2(\mathbb T, m)$ be a real-valued function. Then there exists a real-valued function $\widetilde v\in L^2(\mathbb T, m)$ 
such that $\int_{\mathbb T}\widetilde v\mathrm{d}m=0$ and $v+i\widetilde v\in H^2$. The function $\widetilde v$ is called 
the (\emph{harmonic}) \emph{conjugate} of $v$ (see, for example, {\cite[Definition I.A.5.3.2]{nik02}} or {\cite[Sec. 2.1]{borsp}}). 

Let $w\in L^1(\mathbb T, m)$, and let $w\geq 0$. The function $w$ is called a \emph{Helson--Szeg\H o weight function}, if $w=e^{u+\widetilde v}$, where
 $u$, $v\in L^\infty(\mathbb T, m)$ are real-valued functions, and $\|v\|_\infty<\pi/2$. If $\psi\in H^2$, $\psi$ is outer, and  
$\psi=|\psi|e^{i\phi}$ $m$-a.e. on $\mathbb T$, where $\phi\in L^\infty(\mathbb T, m)$ is a real-valued function  and $\|\phi\|_\infty<\pi/4$, 
then $|\psi|^2$ is a Helson--Szeg\H o weight function. For a proof, see {\cite[Theorem 2.3(i)]{borsp}}, 
or {\cite[Theorems I.A.5.4.1 and I.B.4.3.1, Lemma I.B.4.3.3, Corollary I.B.4.2.6(2)]{nik02}}, 
or {\cite[Ch. VIII.6 and Appendix 4, Theorem 36, Corollaries 31 and 38]{nik86}}. 

The following theorem is a part of the Helson--Szeg\H o theorem, which will be used in the proof of Lemma \ref{lempsi}.

\begin{theoremcite}[Helson--Szeg\H o theorem
 {\cite[Theorem 2.1]{borsp}}, {\cite[Ch. VIII.6]{nik86}}, {\cite[Lemma I.A.5.2.5, Theorem I.A.5.4.1]{nik02}}]\label{thmhhss}
Let $w\in L^1(\mathbb T, m)$, and let $w\geq 0$. Set $\chi(\zeta)=\zeta$ $(\zeta\in\mathbb T)$. For $n\geq 0$, 
define the mapping $\mathcal P_{w,n}$ on the linear set 
$$\Bigl\{\sum_{k=-N}^N a_k \chi^k \ :\  \{a_k\}_{k=-N}^N\subset \mathbb C, N=0,1,\ldots\Bigr\}\subset 
L^2(\mathbb T, w\mathrm{d}m)$$ by the formula
 \begin{equation*}\mathcal P_{w,n}\sum_k a_k \chi^k = \sum_{k=-n}^n a_k \chi^k.\end{equation*}
Then the following are equivalent:
\begin{enumerate}[\upshape (i)]
\item $w$ is a Helson--Szeg\H o weight function;

\item $\mathcal P_{w,n}$ can be extended on $L^2(\mathbb T, w\mathrm{d}m)$ to bounded operators for all $n\geq 0$, and 
$$\sup_{n\geq 0}\|\mathcal P_{w,n}\|<\infty.$$
\end{enumerate}
\end{theoremcite}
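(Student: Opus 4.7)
The plan is to first reduce the uniform boundedness of the truncations $\mathcal{P}_{w,n}$ to the boundedness of the Riesz projection $P_+$ on $L^2(\mathbb{T}, w\,dm)$. The identity $\mathcal{P}_{w,n} = \chi^n P_+\chi^{-n} - \chi^{n+1} P_+\chi^{-(n+1)}$ (where $\chi^{\pm k}$ act isometrically on $L^2(w\,dm)$) shows that boundedness of $P_+$ yields uniform boundedness of the $\mathcal{P}_{w,n}$; conversely, Banach--Steinhaus applied to $\mathcal{P}_{w,n}$ identifies $P_+$ as a bounded strong limit. So the theorem becomes: $P_+$ is bounded on $L^2(w\,dm)$ if and only if $w$ is a Helson--Szeg\H o weight.

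For (i) $\Rightarrow$ (ii), I would use the outer function $\psi\in H^2$ with $|\psi|^2=w$ (which exists because $\log w = u+\tilde v\in L^1$). The unitary $U\colon L^2(w\,dm)\to L^2(\mathbb{T},dm)$, $Uf=\psi f$, conjugates $P_+$ on the weighted space to $f\mapsto \psi P_+(f/\psi)$ on $L^2(dm)$. From $|\psi|^2 = e^{u+\tilde v}$ and outerness one reads off $\arg\psi = \tfrac{1}{2}(\tilde u - v) + c$, so the unimodular factor $\psi/|\psi|$ equals $e^{i c}\,e^{i\tilde u/2}\,e^{-iv/2}$. The assumption $\|v\|_\infty<\pi/2$ means $\|v/2\|_\infty<\pi/4$, which leaves enough room to control the commutator of $P_+$ with multiplication by $e^{-iv/2}$, while $e^{i\tilde u/2}$ is handled using that its analytic extension has bounded-type behavior (exponential of a bounded harmonic conjugate).

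For (ii) $\Rightarrow$ (i), I would first show that boundedness of $P_+$ on $L^2(w\,dm)$ forces the Muckenhoupt $A_2$-condition
\[
\sup_{I\subset\mathbb{T}}\Bigl(\tfrac{1}{|I|}\int_I w\,dm\Bigr)\Bigl(\tfrac{1}{|I|}\int_I w^{-1}\,dm\Bigr)<\infty,
\]
by testing $\|P_+\chi_I\|_{L^2(w)}^2\leq C\|\chi_I\|_{L^2(w)}^2$ against arcs and using the explicit logarithmic singularity of $P_+\chi_I$ near the endpoints of $I$. From $w\in A_2$ I would then extract the decomposition $\log w = u + \tilde v$ with $u,v\in L^\infty$ and $\|v\|_\infty<\pi/2$ by a duality/Hahn--Banach argument in $L^\infty+\widetilde{L^\infty}$, using the John--Nirenberg inequality to convert the $A_2$-bound into the required sharp bound on $\|v\|_\infty$.

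The main obstacle is the \emph{sharp} constant $\pi/2$ in $\|v\|_\infty<\pi/2$. This threshold is dictated by Zygmund's theorem on exponential integrability of the conjugate function, which guarantees $e^{\tilde v}\in L^1$ exactly in this range; any looser bound would fail to identify the Helson--Szeg\H o class, and any stricter bound could not be produced from the $A_2$-condition alone. Matching the $\pi/2$ on both sides of the equivalence is therefore the delicate heart of the theorem, requiring the outer-function construction in one direction and a careful quantitative John--Nirenberg estimate in the other.
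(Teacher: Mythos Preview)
The paper does not prove this statement. Theorem~\ref{thmhhss} is stated as a cited result (the classical Helson--Szeg\H{o} theorem), with references to \cite{borsp}, \cite{nik86}, and \cite{nik02}, and is then invoked as a black box in the proof of Lemma~\ref{lempsi}. There is therefore no proof in the paper to compare your proposal against.

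Your sketch is a reasonable outline of one standard route, though a few points would need repair. The identity you write for $\mathcal{P}_{w,n}$ has the shifts reversed: $\chi^n P_+\chi^{-n}$ projects onto frequencies $k\geq n$, so the correct formula is $\mathcal{P}_{w,n}=\chi^{-n}P_+\chi^{n}-\chi^{n+1}P_+\chi^{-(n+1)}$. For the converse reduction, the $\mathcal{P}_{w,n}$ converge strongly to the identity, not to $P_+$; one instead passes to the one-sided partial sums $S_N$ (projection onto frequencies $0,\ldots,N$), notes $\mathcal{P}_{w,n}=\chi^{-n}S_{2n}\chi^n$ so that the two families share the same uniform bound, and then uses $S_N\to P_+$ strongly. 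For (ii)$\Rightarrow$(i), going through the $A_2$ condition is legitimate but imports substantial machinery (Hunt--Muckenhoupt--Wheeden plus the nontrivial equivalence of $A_2$ with the Helson--Szeg\H{o} decomposition); the original argument, as in the references \cite{nik86} and \cite{nik02} that the paper cites, proceeds more directly via duality and the distance from $\overline\chi$ to $H^\infty$ in the weighted norm.
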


\begin{lemma} 
\label{lempsi}
Suppose that $\psi\in H^2$ is an outer function such that $|\psi|^2$ is a Helson--Szeg\H o weight function. 
Consider $H^2$ and $x_n=\chi^n\psi$, $n\geq 0$.  Then there exists a family  $\{x'_n\}_{n=0}^\infty\subset H^2$ such that  \eqref{main01} and \eqref{cap1} are fulfilled,  and
$\{\mathcal P_n\}_{n=0}^\infty$ defined by \eqref{ppndef} satisfies \eqref{ppnest}. Moreover,  $\{x_n\}_{n=0}^\infty$ is a Riesz basis of  $H^2$ if and only if  $\psi$, $1/\psi\in H^\infty$.
\end{lemma}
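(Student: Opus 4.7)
The plan is to transport $\{x_n\}_{n\geq 0}$ into the weighted space $L^2(\mathbb T, w\,dm)$ with $w = |\psi|^2$, where the Helson--Szeg\H o theorem (Theorem \ref{thmhhss}) applies directly, and pull the conclusions back via a canonical unitary. Define $V \colon H^2 \to L^2(\mathbb T, w\,dm)$ by $Vf = f/\psi$; the identity $\int_{\mathbb T}|f/\psi|^2|\psi|^2\,dm = \|f\|^2_{H^2}$ shows $V$ is isometric, with $Vx_n = \chi^n$. Since $\psi$ is outer, $\operatorname{span}\{x_n : n \geq 0\}$ is dense in $H^2$, so $V$ is a unitary from $H^2$ onto $H^2(w) := \overline{\operatorname{span}}_{L^2(w)}\{\chi^n : n \geq 0\}$.

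Next, define on analytic polynomials the one-sided truncation $\Pi_n\bigl(\sum_{k\geq 0}a_k\chi^k\bigr) = \sum_{k=0}^n a_k\chi^k$. On such polynomials $\Pi_n$ coincides with the symmetric truncation $\mathcal P_{w,n}$ of Theorem \ref{thmhhss}, the negative-frequency terms being absent. As $|\psi|^2$ is a Helson--Szeg\H o weight, Theorem \ref{thmhhss} yields $\sup_n\|\mathcal P_{w,n}\|_{L^2(w)}<\infty$; hence $\Pi_n$ extends by density to a uniformly bounded sequence of operators on $H^2(w)$. The rank-one operators $\Pi_n - \Pi_{n-1}$ map $H^2(w)$ onto $\mathbb C\chi^n$, so the conjugates $V^{-1}(\Pi_n - \Pi_{n-1})V$ are bounded rank-one operators on $H^2$ of the form $f \mapsto c_n(f)\, x_n$ with $c_n$ bounded linear; Riesz representation in $H^2$ gives $x'_n \in H^2$ with $c_n(f) = (f, x'_n)$, and evaluation on $x_k$ yields \eqref{main01}. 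With this $\{x'_n\}$ one has $\mathcal Q_n = x_n\otimes x'_n = V^{-1}(\Pi_n-\Pi_{n-1})V$ and $\mathcal P_n = \sum_{k=0}^n\mathcal Q_k = V^{-1}\Pi_n V$, so \eqref{ppnest} follows. For \eqref{cap1}: the convergence $\mathcal P_n \to I$ holds on the dense set $\operatorname{span}\{x_k\}$, and combined with uniform boundedness it holds strongly on all of $H^2$; if $f\perp x'_n$ for every $n$, then $\mathcal Q_n f = 0$, whence $\mathcal P_N f = 0$ and $f = \lim_N \mathcal P_N f = 0$.

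For the Riesz basis assertion, the map $W \colon \chi^n \mapsto x_n = \chi^n\psi$ coincides on polynomials with the multiplication operator $M_\psi$, so it extends to a bounded operator on $H^2$ iff $\psi \in H^\infty$. If in addition $W$ is invertible in $\mathcal L(H^2)$, then $W^{-1}$ commutes with the unilateral shift (since $W$ does) and therefore has the form $M_\varphi$ for some $\varphi \in H^\infty$; the identity $\psi\varphi = 1$ forces $\varphi = 1/\psi \in H^\infty$. Conversely, if $\psi, 1/\psi \in H^\infty$, then $M_\psi$ and $M_{1/\psi}$ are mutually inverse bounded operators, and $\{x_n\}$ is a Riesz basis.

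The main delicate point is the mild circularity in the statement: $\mathcal P_n$ is defined through the yet-to-be-constructed $\{x'_n\}$. The resolution, as above, is to establish uniform boundedness of the truncations $\Pi_n$ on the $H^2(w)$ side first---invoking Theorem \ref{thmhhss} only on analytic polynomials---and only then extract $x'_n$, and verify \eqref{main01}, \eqref{ppnest}, \eqref{cap1}, from the bounded rank-one projections $V^{-1}(\Pi_n-\Pi_{n-1})V$ so obtained.
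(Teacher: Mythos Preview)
Your proof is correct and follows essentially the same approach as the paper: the unitary $V$ you define is precisely $J_\psi^{-1}$ in the paper's notation, and your $\Pi_n$ is the restriction of $\mathcal P_{w,n}$ to the analytic subspace $P^2(w\,\mathrm{d}m)$, so the constructions of $\mathcal P_n$, $\mathcal Q_n$, and $x'_n$ coincide. Your treatment of the Riesz basis assertion via the commutant of $S$ is slightly more detailed than the paper's, but the content is the same.
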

\begin{proof}
Set $$ P^2(w\mathrm{d}m)=\operatorname{clos}_{L^2(\mathbb T, w\mathrm{d}m)}
\Bigl\{\sum_{k=0}^N a_k \chi^k \ :\  \{a_k\}_{k=0}^N\subset \mathbb C, N=0,1,\ldots\Bigr\}$$ 
and define the mapping $J_\psi\colon P^2(w\mathrm{d}m)\to H^2$ by the formula
$$ J_\psi\sum_{k=0}^N a_k \chi^k=\sum_{k=0}^N a_k x_k.$$ 
Then $J_\psi$ can be extended to a unitary transformation. Set $$\mathcal P_n=J_\psi\mathcal P_{w,n}|_{P^2(w\mathrm{d}m)}J_\psi^{-1}.$$ 
Estimate \eqref{ppnest} follows from the latter equality and the Helson--Szeg\H o theorem, see Theorem \ref{thmhhss}. 
Note that 
$$\mathcal P_n\sum_{k=0}^N a_k x_k=\sum_{k=0}^n a_k x_k,\  \  \{a_k\}_{k=0}^N\subset \mathbb C,
 N=0,1,\ldots .$$ 
Set $\mathcal Q_0=\mathcal P_0$ and $\mathcal Q_n=\mathcal P_n-\mathcal P_{n-1}$ for $n\geq 1$. Clearly, 
$\mathcal Q_n$ are one-dimensional operators, therefore, there exists a family  $\{x'_n\}_{n=0}^\infty\subset H^2$ such that 
\eqref{qqndef} is fulfilled. 
Clearly, $\{\mathcal P_n\}_{n=0}^\infty$  satisfies \eqref{ppndef}.  Therefore, \eqref{main01} and \eqref{cap1}
follow from   \eqref{ppnest}, see the references after  \eqref{ppnest}.

Clearly, $\{\chi^n\}_{n=0}^\infty$  is an orthonormal basis of $H^2$. 
Define $M_\psi\colon H^2\to H^2$ by the formula $M_\psi h=\psi h$ ($h\in H^2$). Clearly,  $M_\psi\chi^n= x_n$, $n\geq 0$.  Therefore, $\{x_n\}_{n=0}^\infty$ is a Riesz basis of $H^2$ if and only if  $M_\psi$ is bounded and has bounded inverse on $H^2$, which is equivalent to the inclusions $\psi$, $1/\psi\in H^\infty$.  
 \end{proof}

In Example \ref{exapsi} below, Lemma \ref{lempsi} and examples from {\cite[Ch. I.A.5]{nik02}}, 
 {\cite[Sec. 2.1, Example 3.3.2]{borsp}}, {\cite[Example II.I.11.2, p. 351]{singer}}, {\cite[Ch. VIII.6]{nik86}} are used. 

\begin{example}\label{exapsi}
For $\alpha\in(-1/2,0)\cup(0,1/2)$ set $\psi_\alpha(z)=(1-z)^\alpha$, $z\in\mathbb D$.
Then $\psi_\alpha$ is a Helson--Szeg\H o weight function. Therefore, for $\mathcal H=H^2$ and $x_n=\chi^n\psi_\alpha$,  $n\geq 0$, 
there exists $x'_n$ such that \eqref{main01} and \eqref{ppnest} (and, consequently, \eqref{qqn} and \eqref{cap1}) are fulfilled. 
But $\{x_n\}_{n=0}^\infty$ is not a Riesz basis.
For $\{\lambda_n\}_{n=0}^\infty$ from Lemma \ref{lemttbounded} 
define $U_0$, $T\in\mathcal L(H^2)$ by the formulas $U_0 \chi^n=\lambda_n\chi^n$, $Tx_n=\lambda_nx_n$, $n\geq 0$.  
Consider the linear mappings $X_\alpha$ and $X_{*\alpha}$ acting by the formulas 
$X _\alpha x_n=\chi^n$ and $X_{*\alpha}\chi^n=x_n$, $n\geq 0$. Applying Theorems \ref{thm3}, \ref{thmmain1}, 
Proposition \ref{propstar} and Corollary \ref{cor110},  we obtain the following. 
If $\alpha\in(-1/2,0)$, then 
$X_\alpha$ is bounded and is the canonical intertwining mapping  for $T$ and $U_0$, and $T^*$ does not have a unitary asymptote. 
If $\alpha\in(0,1/2)$, then 
$X_{*\alpha}$ is bounded,  $X_{*\alpha}^*$  is the canonical intertwining mapping  for $T^*$ and $U_0^*$, and $T$ 
does not have a unitary asymptote. By Remark \ref{remapostol}, $T\sim U_0$. 
\end{example}

\begin{example}\label{exanet} Let $T_1$ and $T_2$ be constructed as in Example \ref{exapsi} and have the following properties: 
$T_1$ and $T_2^*$ have unitary asymptotes,  $T_1^*$ and $T_2$ do not have a unitary asymptote. Set $T=T_1\oplus T_2$. By  {\cite[Proposition 13]{kerntuple}}, 
$T$ and $T^*$ do not have a unitary asymptote. 
\end{example}

In Examples \ref{exapsi} and \ref{exanet} the families $\{x_n\}_{n=0}^\infty$, $\{x'_n\}_{n=0}^\infty$ satisfying  \eqref{qqn} 
(where $\mathcal Q_n$, $n\geq 0$,  are defined by \eqref{qqndef}) 
are considered. In Example \ref{exanoest} the operator $T$ is constructed such that $T$ and $T^*$ have unitary asymptotes 
and \eqref{qqn} is not fulfilled. (Since \eqref{qqn} is not fulfilled, $T\not\approx U_0$.)

\begin{example}\label{exanoest}
 Let $\{e_n\}_{n=0}^\infty$ be an orthonormal basis of a Hilbert space $\mathcal H$, and let $\{c_n\}_{n=0}^\infty$ be such that 
 $c_n>0$ and $c_n\to\infty$. Set $x_{2n}=e_{2n}$, $x_{2n+1}=e_{2n+1}+c_ne_{2n}$, then $x'_{2n}=e_{2n}-c_n e_{2n+1}$,  
$x'_{2n+1}=e_{2n+1}$, $n\geq 0$. We have $\|x_n\|\|x'_n\|=(1+c_n^2)^{1/2}\to\infty$. Thus, \eqref{qqn} is not fulfilled (where $\mathcal Q_n$, $n\geq 0$,  are defined by \eqref{qqndef}). 

Let $\{\lambda_n\}_{n=0}^\infty\subset \mathbb T$ be such that 
$\sup_{n\geq 0}c_n|\lambda_{2n}-\lambda_{2n+1}|<\infty$. Define a linear mapping $T$ by the formula 
 $Tx_n=\lambda_nx_n$, $n\geq 0$. We will to prove that $T$ is bounded, and $T$ and $T^*$ have unitary asymptotes. 

Set $\mathcal H_0=\vee_{n\geq 0} e_{2n}$ and   $\mathcal H_1=\vee_{n\geq 0} e_{2n+1}$. It follows from the definition of $T$ 
 that to prove the boundedness of $T$ it is  sufficient to prove the boundedness of $P_{\mathcal H_0}T|_{\mathcal H_1}$. 

Let $\{a_n\}_{n=0}^\infty\subset \mathbb C$ be such that $\sum_{n\geq 0}|a_n|^2<\infty$. 
Since $e_{2n+1}=x_{2n+1}-c_ne_{2n}$, we have
$$ P_{\mathcal H_0}T\sum_{n\geq 0}a_ne_{2n+1}=\sum_{n\geq 0}a_nc_n(\lambda_{2n+1}-\lambda_{2n})e_{2n}. $$
Therefore,
 \begin{align*}\Bigl\|P_{\mathcal H_0}T\sum_{n\geq 0}a_ne_{2n+1}\Bigr\|^2&=\sum_{n\geq 0}|a_nc_n(\lambda_{2n+1}-\lambda_{2n})|^2\\&\leq
(\sup_{n\geq 0}c_n|\lambda_{2n}-\lambda_{2n+1}|)^2\sum_{n\geq 0}|a_n|^2.\end{align*} 
The boundedness of $T$ is proved. 

Take $\{\alpha_n\}_{n=0}^\infty$, $\{\alpha'_n\}_{n=0}^\infty$ such that $\alpha_n>0$, $\alpha'_n>0$ for all $n\geq 0$, 
\begin{equation}\label{sup} \sup_{n\geq 0}\alpha_n<\infty, \ \ \sup_{n\geq 0}\alpha'_n<\infty, \ \ \sup_{n\geq 0}c_n\alpha_{2n}<\infty,  
\ \ \sup_{n\geq 0}c_n\alpha'_{2n+1}<\infty,\end{equation}
\begin{equation}\label{inf}\begin{gathered}\inf_{n\geq 0}\alpha_{2n+1}>0,  \ \ \inf_{n\geq 0}\alpha'_{2n}>0,  \\
 \inf_{n\geq 0}(1+c_n^2)^{1/2}\alpha_{2n}>0, 
\ \ \inf_{n\geq 0}(1+c_n^2)^{1/2}\alpha'_{2n+1}>0.\end{gathered}\end{equation}
Define $X$ and $X_*$ by the formulas $Xx_n=\alpha_ne_n$, $X_*x'_n=\alpha'_ne_n$, $n\geq 0$. Then $X$ and $X_*$ are canonical intertwining mappings  
for $T$ and $T^*$, respectively. Indeed,  the boundedness of $X$ and $X_*$ is proved using \eqref{sup} 
similarly to the proving of the boundedness of $T$. When boundedness was proved,  Lemma \ref{lem1}(iv), Theorem \ref{thm3} and \eqref{inf} are applied.   

By Remark \ref{remapostol}, $T\sim U_0$, where $U_0e_n=\lambda_n e_n$, $n\geq 0$. 
\end{example}

\section{Operators quasisimilar to $S$}

Recall that $H^2$ is the Hardy space on the unit disc $\mathbb D$ and 
$m$ is the normalized Lebesgue (arc length) measure  on the unit circle $\mathbb T$. 
Set $\chi(z)=z$, $z\in\overline{\mathbb D}$. Then $S\in \mathcal L(H^2)$ is the operator of multiplication by $\chi$. 
Functions $h$ from $H^2$ have nontangential boundary values $h(\zeta)$ for  $m$-a.e. $\zeta \in\mathbb T$. 
Let ${\mathbf u}$ be an inner function, that is, $|{\mathbf u}(\zeta)|=1$ for  $m$-a.e. $\zeta \in\mathbb T$. 
Put $\mathcal K_{\mathbf u}=H^2\ominus{\mathbf u} H^2$. The space $\mathcal K_{\mathbf u}$ is called the 
\emph{model space} 
corresponding to ${\mathbf u}$. It is well known and easy to see that $f\in\mathcal K_{\mathbf u}$ if and only if 
${\mathbf u}\overline\chi\overline f\in\mathcal K_{\mathbf u}$. Let  $\zeta\in\mathbb T$ be such that 
 the nontangential boundary value ${\mathbf u}(\zeta)$ at  $\zeta$ exists and  $|{\mathbf u}(\zeta)|=1$. Define a function  
\begin{equation}
\label{y1}
k_{{\mathbf u},\zeta}(z)=\frac{1-\overline{{\mathbf u}(\zeta)}{\mathbf u}(z)}{1-\overline\zeta z},\ \ \ z\in\mathbb D,  
\end{equation}
which is  analytic in $\mathbb D$. If $k_{{\mathbf u},\zeta}\in H^2$, then 
$k_{{\mathbf u},\zeta}\in\mathcal K_{\mathbf u}$, every $f\in\mathcal K_{\mathbf u}$ has nontangential boundary value $f(\zeta)$ at $\zeta$, and $f(\zeta)=(f,k_{{\mathbf u},\zeta})$. In particular, if $\mathbf u$ has analytic continuation 
in a neighborhood of $\zeta\in\mathbb T$, then $k_{{\mathbf u},\zeta}\in H^2$.  For references see, for example, 
{\cite[Theorem 8.6.1]{cauchy}} or {\cite[Theorem 6.11]{model}}.  

The space $\mathcal K_{\mathbf u}$ is a coinvariant subspace of $S\in\mathcal L(H^2)$. 
 It is easy to see that  
\begin{equation}
\label{y2}
P_{\mathcal K_{\mathbf u}}S f =\chi f-(f,P_{H^2}({\mathbf u}\overline\chi)){\mathbf u}, \ \ \ f\in\mathcal K_{\mathbf u}. 
\end{equation}

Let $\mathbf u$, $\mathbf v$ be two inner function.  An analytic  function $\varphi\colon\mathbb D\to\mathbb C$ is called a   multiplier between $\mathcal K_{\mathbf u}$ and $\mathcal K_{\mathbf v}$, if 
$\varphi\mathcal K_{\mathbf u}\subset\mathcal K_{\mathbf v}$. Multipliers between model spaces are studied in  \cite{c} and  \cite{fhr}. 

\begin{proposition}\label{propdeftt} Let $\mathbf u$, $\mathbf v$ be two inner function, and let $\varphi_0\in H^\infty$ 
be a multiplier between $\mathcal K_{\mathbf u}$ and $\mathcal K_{\mathbf v}$ such that  
 \begin{equation}\label{closvarphi0}\operatorname{clos}\varphi_0\mathcal K_{\mathbf u}=\mathcal K_{\mathbf v}.
\end{equation} Then $\varphi_0$ is outer, and 
 $\varphi_0=c{\mathbf v}\overline{\mathbf u}\overline \varphi_0$ $m$-a.e. on $\mathbb T$ for some $c\in\mathbb T$. 
Define $T$, $X$, $Y\in\mathcal L(H^2)$ by the formulas
$$ Y({\mathbf u}h+f)={\mathbf v}h+\varphi_0 f, \ \ \ X({\mathbf v}h+g)={\mathbf u}\varphi_0 h+g, \ \ \ \ h\in H^2, \ f\in \mathcal K_{\mathbf u}, \ 
g\in\mathcal K_{\mathbf v}, $$
$$T=S+({\mathbf v}-\varphi_0{\mathbf u})\otimes\frac{1}{\overline c\varphi_0(0)}P_{H^2}({\mathbf v}\overline\chi).$$
Then $X$ and $Y$ are quasiaffinities such that $YS=TY$, $XT=SX$, and $XY=\varphi_0(S)$. 

Moreover, if $Z\in\mathcal L(H^2)$ is such that $ZT=SZ$, then there exists $\psi\in H^\infty$ such that
\begin{equation}\label{zzyy} ZY=\psi(S),\end{equation} 
\begin{equation}\label{psiv} \frac{\psi}{\varphi_0}\mathcal K_{\mathbf v}\subset H^2,
\end{equation}
and 
\begin{equation}\label{zzdef} Z({\mathbf v}h+g)={\mathbf u}\psi h+\frac{\psi}{\varphi_0}g, 
\ \ \ h\in H^2, \ g\in\mathcal K_{\mathbf v}. 
\end{equation}
Conversely, let $\psi\in H^\infty$  satisfy \eqref{psiv}. Define  $Z$  by \eqref{zzdef}. Then 
 $Z\in\mathcal L(H^2)$, \eqref{zzyy} is fulfilled,  and $ZT=SZ$. 
\end{proposition}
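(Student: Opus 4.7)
The proof splits into several parts. The main substantive step is establishing the structural identities for $\varphi_0$ (outerness and the rigidity $\varphi_0 = c\mathbf{v}\overline{\mathbf{u}}\,\overline{\varphi_0}$ on $\mathbb{T}$); the remaining steps are largely mechanical. The key tool for the structural step is the pair of antiunitary conjugations $\mathcal{C}_{\mathbf{u}}f = \mathbf{u}\overline\chi\,\overline{f}$ on $\mathcal{K}_{\mathbf{u}}$ and $\mathcal{C}_{\mathbf{v}}g = \mathbf{v}\overline\chi\,\overline{g}$ on $\mathcal{K}_{\mathbf{v}}$. A direct calculation gives $\mathcal{C}_{\mathbf{v}}(\varphi_0 \mathcal{C}_{\mathbf{u}}f) = \mathbf{v}\overline{\mathbf{u}}\,\overline{\varphi_0}\,f$ for $f \in \mathcal{K}_{\mathbf{u}}$, so $\varphi_1 := \mathbf{v}\overline{\mathbf{u}}\,\overline{\varphi_0}$ (a priori only in $L^\infty$) is likewise a multiplier $\mathcal{K}_{\mathbf{u}} \to \mathcal{K}_{\mathbf{v}}$ with dense range. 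Since $|\varphi_1/\varphi_0| = 1$ on $\mathbb{T}$, multiplication by $\varphi_1/\varphi_0$ extends from the dense subspace $\varphi_0 \mathcal{K}_{\mathbf{u}}$ to an isometric multiplier of $\mathcal{K}_{\mathbf{v}}$ into itself; by the classification of such multipliers of a model space (see \cite{c}, \cite{fhr}), it must be a unimodular constant $c$, yielding the identity. Outerness of $\varphi_0$ follows independently: if $\varphi_0 = \theta\eta$ with $\theta$ non-constant inner, then $\varphi_0 \mathcal{K}_{\mathbf{u}} \subset \theta H^2$, so by density $\mathcal{K}_{\mathbf{v}} \subset \theta H^2$, forcing $\mathcal{K}_\theta \subset \mathbf{v}H^2$, which contradicts the nontriviality of $\mathcal{K}_\theta$ (take the reproducing kernel $k_{\theta,0}$).

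Once the structural identities are established, the rest is bookkeeping. Boundedness of $X$ and $Y$ is immediate from $\varphi_0 \in H^\infty$. For injectivity of $Y$: if $\mathbf{v}h + \varphi_0 f = 0$ with $h \in H^2$ and $f \in \mathcal{K}_{\mathbf{u}}$, the two terms lie in the orthogonal summands $\mathbf{v}H^2$ and $\mathcal{K}_{\mathbf{v}}$ of $H^2$ and must vanish separately; outerness of $\varphi_0$ then forces $f = 0$. Dense range of $Y$ follows from $\operatorname{clos}\varphi_0 \mathcal{K}_{\mathbf{u}} = \mathcal{K}_{\mathbf{v}}$, and the analysis of $X$ is symmetric. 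The identity $XY = \varphi_0(S)$ is a one-line check on $H^2 = \mathbf{u}H^2 \oplus \mathcal{K}_{\mathbf{u}}$. For $YS = TY$: on $\mathbf{u}h$ both sides equal $\mathbf{v}\chi h$; on $f \in \mathcal{K}_{\mathbf{u}}$, one writes $\chi f = P_{\mathcal{K}_{\mathbf{u}}}\chi f + (f, P_{H^2}(\mathbf{u}\overline\chi))\mathbf{u}$ by \eqref{y2}, applies $Y$, and matches with $T(\varphi_0 f)$. The identity $\mathbf{u}\varphi_0 = c\mathbf{v}\overline{\varphi_0}$ (rearranged from the rigidity) relates $(\varphi_0 f, \mathbf{v}\overline\chi)$ to $(f, P_{H^2}(\mathbf{u}\overline\chi))$; the scalar $\frac{1}{\overline{c}\varphi_0(0)}$ and the rank-one structure $(\mathbf{v} - \varphi_0\mathbf{u})\otimes P_{H^2}(\mathbf{v}\overline\chi)$ are calibrated precisely so that the two sides agree. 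The verification $XT = SX$ is symmetric.

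For the classification of operators $Z$ with $ZT = SZ$: from $ZT = SZ$ and $YS = TY$, one gets $(ZY)S = Z(YS) = Z(TY) = (ZT)Y = (SZ)Y = S(ZY)$, so $ZY$ commutes with $S$; since the commutant of $S$ on $H^2$ is $H^\infty$ acting by multiplication, $ZY = \psi(S)$ for some $\psi \in H^\infty$, proving \eqref{zzyy}. Evaluating on $\mathbf{u}h$ (using $Y(\mathbf{u}h) = \mathbf{v}h$) gives $Z(\mathbf{v}h) = \mathbf{u}\psi h$, and evaluating on $f \in \mathcal{K}_{\mathbf{u}}$ (using $Yf = \varphi_0 f$) gives $Z(\varphi_0 f) = \psi f = (\psi/\varphi_0)(\varphi_0 f)$. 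Boundedness of $Z$ together with density of $\varphi_0 \mathcal{K}_{\mathbf{u}}$ in $\mathcal{K}_{\mathbf{v}}$ extends the formula $Zg = (\psi/\varphi_0)g$ to all $g \in \mathcal{K}_{\mathbf{v}}$, which forces \eqref{psiv} and establishes \eqref{zzdef}. For the converse, given $\psi \in H^\infty$ satisfying \eqref{psiv}, the closed graph theorem gives boundedness of $g \mapsto (\psi/\varphi_0)g$ from $\mathcal{K}_{\mathbf{v}}$ to $H^2$, hence of $Z$; the identities $ZY = \psi(S)$ and $ZT = SZ$ are then verified by direct computation on generating elements.

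The main obstacle lies in the first step: the rigidity argument forcing the unimodular multiplier $\varphi_1/\varphi_0$ of $\mathcal{K}_{\mathbf{v}}$ to be a scalar constant, which invokes the nontrivial theory of multipliers between model spaces. Once this identity is in place, everything else reduces to bookkeeping with the decompositions $H^2 = \mathbf{u}H^2 \oplus \mathcal{K}_{\mathbf{u}}$ and $H^2 = \mathbf{v}H^2 \oplus \mathcal{K}_{\mathbf{v}}$ together with the action of multiplication operators.
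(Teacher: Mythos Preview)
Your argument is essentially correct and follows the same overall route as the paper. You supply more detail on the structural step (outerness of $\varphi_0$ and the identity $\varphi_0=c\mathbf v\overline{\mathbf u}\,\overline{\varphi_0}$), which the paper simply defers to \cite{c} and \cite{fhr}; your derivation via the conjugations $\mathcal C_{\mathbf u}$, $\mathcal C_{\mathbf v}$ is the standard one. The classification of intertwiners $Z$ and its converse (via the commutant of $S$ and the Closed Graph Theorem) match the paper's proof line for line.

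One imprecision worth flagging: your claims that ``the analysis of $X$ is symmetric'' and ``the verification $XT=SX$ is symmetric'' hide genuine asymmetries. For $\ker X$: if $\mathbf u\varphi_0 h+g=0$ with $h\in H^2$, $g\in\mathcal K_{\mathbf v}$, the two summands do \emph{not} lie in orthogonal components of $H^2$, so the argument you gave for $\ker Y$ does not transfer. The paper instead writes the $\mathcal K_{\mathbf v}$-part as $\mathbf v\overline\chi\overline g$, uses the rigidity $\mathbf u\varphi_0=c\mathbf v\overline{\varphi_0}$ to rewrite the kernel equation as $\overline\chi\overline g/\overline{\varphi_0}=ch$, and finishes via outerness of $\varphi_0$ and $\overline\chi\overline{H^2}\cap H^2=\{0\}$. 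For $XT=SX$, the paper does not verify it directly either: it deduces it from $XTY=XYS=\varphi_0(S)S=S\varphi_0(S)=SXY$ together with the already-established dense range of $Y$. You have all the ingredients for both steps, but ``symmetric'' is the wrong word for what is actually needed.
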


\begin{proof} Many statements of the proposition can be easily checked directly or using  \cite{c} and  \cite{fhr}. 
The equality $\ker Y=\{0\}$ is evident from the definition of $Y$. The equality  
\begin{equation}\label{closyy}\operatorname{clos}YH^2=H^2\end{equation} follows from 
\eqref{closvarphi0}.  
We will to  prove that $YS=TY$. Note that the latest equality is equivalent to the equality
$$ YS-SY=({\mathbf v}-\varphi_0{\mathbf u})\otimes\frac{1}{\overline c\varphi_0(0)}Y^*P_{H^2}({\mathbf v}\overline\chi).$$
Let $h\in H^2$, and let $f\in \mathcal K_{\mathbf u}$. Then, by \eqref{y2},  
\begin{equation*}\begin{aligned}
(YS-SY)({\mathbf u}h+f)& =Y\bigl(\chi{\mathbf u}h+(f,P_{H^2}({\mathbf u}\overline\chi)){\mathbf u}+
P_{\mathcal K_{\mathbf u}}(\chi f)\bigr) - \chi{\mathbf v}h-\chi\varphi_0 f \\&
= (f,P_{H^2}({\mathbf u}\overline\chi)){\mathbf v} +\bigl(\chi f-(f,P_{H^2}({\mathbf u}\overline\chi)){\mathbf u}\bigr)\varphi_0-\chi\varphi_0 f\\&
=(f,P_{H^2}({\mathbf u}\overline\chi))({\mathbf v}-\varphi_0 {\mathbf u}).\end{aligned}\end{equation*}
If $h\in H^2$ and $ g\in\mathcal K_{\mathbf v}$, then $Y^*({\mathbf v}h+g)={\mathbf u}h+P_{\mathcal K_{\mathbf u}}\overline\varphi_0 g$. 
In particular, $Y^*P_{H^2}({\mathbf v}\overline\chi)=\overline c\varphi_0(0)P_{H^2}({\mathbf u}\overline\chi)$. Thus, the equality $YS=TY$ is proved. 

Now we  will to prove that $\ker X=\{0\}$. 
Assume that $h\in H^2$ and $g\in\mathcal K_{\mathbf v}$ are such that $X({\mathbf v}h-{\mathbf v}\overline\chi\overline g)=0$. 
This means that 
${\mathbf u}\varphi_0 h={\mathbf v}\overline\chi\overline g$. 
Since $\varphi_0=c{\mathbf v}\overline{\mathbf u}\overline \varphi_0$  $m$-a.e. on $\mathbb T$ , we have 
$$\frac{\overline\chi\overline g}{\overline\varphi_0}= c h\ \  \  m\text{-a.e. on  }\mathbb T.$$
Since $\varphi_0$ is outer, the latest equality implies that 
$$\frac{\overline\chi\overline g}{\overline\varphi_0}\in \overline\chi\overline{H^2}\cap H^2=\{0\}.$$
Thus, $g=0$ and $h=0$.   

Note that $XTY=XYS=\varphi_0(S)S= S\varphi_0(S)=SXY$. It follows from the latest equality and \eqref{closyy} that $XT=SX$.

Let $Z\in\mathcal L(H^2)$ be such that $ZT=SZ$. Then $ZYS=ZTY=SZY$. Therefore, there exists $\psi\in H^\infty$ such that \eqref{zzyy} is fulfilled. It follows from \eqref{zzyy}  and the definition of $Y$ that $Z\varphi_0f=\psi f$ for every $f\in \mathcal K_{\mathbf u}$. The latest relation, the boundedness of $Z$ and  \eqref{closvarphi0} imply  \eqref{psiv}. 
Equality \eqref{zzdef} follows from the definition of $Y$ and \eqref{zzyy}. 

Conversely, if $\psi\in H^\infty$  satisfies \eqref{psiv} and  $Z$ is defined by \eqref{zzdef}, 
then $Z$ is bounded by the Closed Graph Theorem.   Equality \eqref{zzyy} follows  from \eqref{zzdef}  and implies  $ZTY=ZYS=SZY$. 
 It follows from the latest equality and \eqref{closyy} that $ZT=SZ$.
\end{proof}

\begin{lemma}\label{lemssmain} Suppose that $\mathbf v$ is an inner function. Set  
\begin{equation}\label{eev} E_{\mathbf v}=\{\zeta\in\mathbb T\ : \ k_{\mathbf v,\zeta}\in H^2\},
\end{equation}
where $k_{\mathbf v,\zeta}$ are defined by \eqref{y1} for $\mathbf v$. 
Let $m(E_{\mathbf v})=1$. Suppose that $\varphi_0\in H^\infty$ is outer, 
 $1/\varphi_0\not\in H^2$,
 and $\varphi_1\in H^2$ is such that $\varphi_0\varphi_1\in H^\infty$ and 
$\varphi_1\mathcal K_{\mathbf v}\subset H^2$.
Then there exists $\varphi\in H^2$ such that 
$$\varphi\varphi_0\in H^\infty, \ \ \ \varphi\mathcal K_{\mathbf v}\subset H^2, \ \ \ 
 \ \text{ and } \ \ \frac{\varphi}{\varphi_1}\not\in H^\infty.$$
\end{lemma}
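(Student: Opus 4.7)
The plan is to split into two cases based on whether $\varphi_1$ is bounded below on $\mathbb T$, handling the easy case by taking $\varphi=1$ and the harder case by an outer-function construction exploiting $1/\varphi_0\notin H^2$. I first observe that the constant function $1$ always lies in the target class for $\varphi$: $1\in H^2$, $1\cdot\varphi_0=\varphi_0\in H^\infty$, and $1\cdot\mathcal K_{\mathbf v}\subset H^2$ tautologically. Hence if $\varphi_1$ is not bounded below on $\mathbb T$, equivalently $1/\varphi_1\notin H^\infty$, the choice $\varphi=1$ already satisfies every requirement of the lemma, and the proof is complete in this case.

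In the remaining case $|\varphi_1|\geq c>0$ a.e., so $\varphi_1$ is outer, and the assumption $\varphi_0\varphi_1\in H^\infty$ combined with $|\varphi_1|\geq c$ forces $\varphi_0\in H^\infty$ with $|\varphi_0|\leq C$ a.e. The standing hypothesis $1/\varphi_0\notin H^2$ then implies that $|\varphi_0|$ is not bounded below, so the set $\{\zeta\in\mathbb T:|\varphi_0(\zeta)|<\varepsilon\}$ has positive measure for every $\varepsilon>0$. My plan is to construct $\varphi$ as the outer function with a prescribed boundary modulus $w:\mathbb T\to(0,\infty)$, chosen with $\log w\in L^1(m)$, $w^2\in L^1(m)$, $|\varphi_1|\leq w\leq C'/|\varphi_0|$ a.e., and $w$ strictly exceeding every constant multiple of $|\varphi_1|$ on a positive-measure subset where $|\varphi_0|$ is small. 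These four conditions are designed to yield, respectively, a well-defined outer $\varphi$; membership $\varphi\in H^2$; $\varphi\varphi_0\in H^\infty$; and, since $\varphi_1$ is outer, $\varphi/\varphi_1\notin H^\infty$.

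The main obstacle will be the verification of $\varphi\mathcal K_{\mathbf v}\subset H^2$. This is where the hypothesis $m(E_{\mathbf v})=1$ becomes essential: by \eqref{eev}, for $m$-a.e.\ $\zeta\in\mathbb T$ the reproducing kernel $k_{\mathbf v,\zeta}$ is in $H^2$, so every $f\in\mathcal K_{\mathbf v}$ has boundary value $f(\zeta)=(f,k_{\mathbf v,\zeta})$ and $\mathbf v\overline\chi\overline f\in\mathcal K_{\mathbf v}$. I plan to exploit the Smirnov-class factorization $\varphi=(\varphi/\varphi_1)\varphi_1$ together with $\varphi_1\mathcal K_{\mathbf v}\subset H^2$: writing $\varphi f=(\varphi/\varphi_1)(\varphi_1 f)$, the boundary ratio $|\varphi/\varphi_1|=w/|\varphi_1|$ must be chosen large enough to fail boundedness yet controlled enough that its pointwise product with $\varphi_1 f\in H^2$ remains in $L^2$ for every $f\in\mathcal K_{\mathbf v}$. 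This double constraint, which balances the rate at which $|\varphi_0|\to 0$ against the boundary regularity of elements of $\mathcal K_{\mathbf v}$ guaranteed by $m(E_{\mathbf v})=1$, is the technical heart of the argument; the explicit choice of the weight $w$ is what the detailed proof would carry out.
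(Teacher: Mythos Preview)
Your Case~1 observation --- that $\varphi = 1$ works whenever $1/\varphi_1 \notin H^\infty$ --- is correct and is a shortcut the paper does not take. (A minor quibble: ``$|\varphi_1|$ bounded below'' is not equivalent to $1/\varphi_1 \in H^\infty$, since $\varphi_1$ could be bounded below yet carry a nontrivial inner factor; but in that situation $\varphi=1$ still works, so the dichotomy you actually want is simply whether $1/\varphi_1 \in H^\infty$.)

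The trouble is that your Case~2 plan leaves precisely the substance of the lemma undone, and it misidentifies the key small quantity. You focus on the sets $\{|\varphi_0| < \varepsilon\}$, but to satisfy $|\varphi_1| \le w \le C'/|\varphi_0|$ with $w/|\varphi_1|$ unbounded you need sets where $|\varphi_0\varphi_1|$ --- not merely $|\varphi_0|$ --- is small; this is the paper's opening step, deduced from $\varphi_1 \in L^2$ together with $1/|\varphi_0| \notin L^2$. The real difficulty is the simultaneous control required for $\varphi\mathcal K_{\mathbf v}\subset H^2$: one must choose disjoint sets $\tau_k$ on which $|\varphi_0\varphi_1|$ is small (so a large amplification factor $A_k$ is permitted), $\|k_{\mathbf v,\zeta}\|$ is controlled (so $|f(\zeta)|$ is bounded uniformly over $f\in\mathcal K_{\mathbf v}$), and $m(\tau_k)$ is small enough that $\sum_k A_k^2 \int_{\tau_k}|\varphi_1|^2\,\mathrm{d}m<\infty$. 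The paper achieves this by stratifying $E_{\mathbf v}$ into level sets $E_n=\{C_n<\|k_{\mathbf v,\zeta}\|\le C_{n+1}\}$, splitting into two subcases according to whether $\mathop{\text{\rm ess\,inf}}_{E_n}|\varphi_0\varphi_1|=0$ for some $n$, and in each subcase giving explicit $\tau_k$ and $A_k$; the final $\varphi$ is outer with $|\varphi|=A_k|\varphi_1|$ on $\tau_k$ and $|\varphi|=|\varphi_1|$ elsewhere. Your plan names the right ingredients (outer function with prescribed modulus, the reproducing-kernel bound coming from $m(E_{\mathbf v})=1$), but nothing in the extra hypothesis $1/\varphi_1\in H^\infty$ simplifies this three-way coordination, so your Case~2 still requires the paper's full construction.
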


\begin{proof} Note that $\mathop{\text{\rm ess\,inf}}_{\mathbb T}|\varphi_0||\varphi_1|=0$. 
Indeed, if there exists $\delta>0$ such that $|\varphi_0||\varphi_1|\geq \delta$  $m$-a.e. on $\mathbb T$, 
then $|\varphi_1|\geq \delta/|\varphi_0|$  $m$-a.e. on $\mathbb T$. Since $\varphi_0$ is outer, 
the latest estimate implies that $1/\varphi_0\in H^2$, which contradicts to the assumption on $\varphi_0$. 

Take $\{C_n\}_{n=1}^\infty$ such that $0<C_n<C_{n+1}$ for all $n\geq 1$ and $C_n\to\infty$. 
Set $$E_n=\{\zeta\in E_{\mathbf v}\ :\ C_n<\|k_{\mathbf v,\zeta}\|\leq C_{n+1}\}, \ \ n \geq 1,$$
and $$\delta_n =\mathop{\text{\rm ess\,inf}}_{E_n}|\varphi_0||\varphi_1|,\ \ n \geq 1.$$
(If $m(E_n)=0$, then $\delta_n=+\infty$.)

Consider two cases. \emph{First case}: there exists $n\geq 1$ such that $\delta_n=0$. Then 
there exist sequences $\{\delta_{1k}\}_{k=1}^\infty$ and $\{\tau_{1k}\}_{k=1}^\infty$ such that
 $0<\delta_{1k+1}<\delta_{1k}$, $\delta_{1k}\to 0$, $\tau_{1k}\subset E_n$, $m(\tau_{1k})>0$, 
and $$\delta_{1k+1}<|\varphi_0||\varphi_1|\leq\delta_{1k} \  m\text{-a.e. on }\tau_{1k} \text{ for all }k\geq 1. $$
Set $A_k=1/\delta_{1k}$, $k\geq 1$. Take a sequence $\{\varepsilon_k\}_{k=1}^\infty$ such that 
$\varepsilon_k>0$  for all $k\geq 1$, and 
\begin{equation}\label{aaepsilon} \sum_{k=1}^\infty\varepsilon_kA_k^2<\infty.\end{equation}
There  exists a sequence $\{\tau_k\}_{k=1}^\infty$ such that $\tau_k\subset\tau_{1k}$, $m(\tau_k)>0$, 
and $$\int_{\tau_k}|\varphi_1|^2{\mathrm d}m\leq\varepsilon_k \ \text{ for all }k\geq 1.$$
Then \begin{equation}\label{aavarphi1}  \sum_{k=1}^\infty A_k^2\int_{\tau_k}|\varphi_1|^2{\mathrm d}m<\infty\end{equation}
and for every $g\in\mathcal K_{\mathbf v}$ 
\begin{equation*}\begin{aligned} \sum_{k=1}^\infty   A_k^2\int_{\tau_k}|\varphi_1(\zeta)|^2|g(\zeta)|^2{\mathrm d}m(\zeta)  &
\leq \|g\|^2\sum_{k=1}^\infty A_k^2\int_{\tau_k}|\varphi_1(\zeta)|^2\|k_{\mathbf v,\zeta}\|^2{\mathrm d}m(\zeta) \\& 
\leq C_{n+1}^2\|g\|^2\sum_{k=1}^\infty A_k^2\int_{\tau_k}|\varphi_1(\zeta)|^2{\mathrm d}m(\zeta),\end{aligned}\end{equation*}
since $\tau_k\subset E_n$ for all $k\geq 1$. 

\emph{Second case}: $\delta_n>0$ for every $n\geq 1$.   Then there exists a subsequence $\{\delta_{n_k}\}_{k=1}^\infty$ such that 
 $0<\delta_{n_{k+1}}<\delta_{n_k}$ for all $k\geq 1$ and $\delta_{n_k}\to 0$. 
Set $$\tau_{1k}=\{\zeta\in E_{n_k}\ : \ |\varphi_0(\zeta)||\varphi_1(\zeta)|\leq\delta_{n_{k-1}}\}, \ \ \ k\geq 1.$$
Then $m(\tau_{1k})>0$ for every $k\geq 1$. Set $A_k=1/\delta_{n_{k-1}}$, $k\geq 1$. 
 Take a sequence $\{\varepsilon_k\}_{k=1}^\infty$ such that 
$\varepsilon_k>0$ for all $k\geq 1$, and \eqref{aaepsilon} is fulfilled. 
There  exists a sequence $\{\tau_k\}_{k=1}^\infty$ such that $\tau_k\subset\tau_{1k}$, $m(\tau_k)>0$, 
and $$C_{n_k+1}^2\int_{\tau_k}|\varphi_1|^2{\mathrm d}m\leq\varepsilon_k \ \text{ for all }k\geq 1.$$
Then \eqref{aavarphi1}  is fulfilled. 
Let $g\in\mathcal K_{\mathbf v}$. Since $\tau_k\subset E_{n_k}$, we have
$$ |g(\zeta)|=|(g,k_{\mathbf v,\zeta})|\leq\|g\|\|k_{\mathbf v,\zeta}\|\leq\|g\|C_{n_k+1}
\ \ \text{ for all }\zeta\in\tau_k.$$
Therefore, 
$$  \sum_{k=1}^\infty A_k^2\int_{\tau_k}|\varphi_1|^2|g|^2{\mathrm d}m
\leq \|g\|^2\sum_{k=1}^\infty C_{n_k+1}^2 A_k^2\int_{\tau_k}|\varphi_1|^2{\mathrm d}m.$$

In both cases, we obtain the sequences  $\{A_k\}_{k=1}^\infty$ and $\{\tau_k\}_{k=1}^\infty$ such that $A_k>0$ for all $k\geq 1$, 
$A_k\to\infty$, $m(\tau_k)>0$, $\tau_k\cap\tau_l=\emptyset$ for $k\neq l$, $k$, $l\geq 1$, \eqref{aavarphi1}  is fulfilled,
\begin{equation}\label{aakdeltak} \sup_{k\geq 1}A_k\mathop{\text{\rm ess\,sup}}_{\tau_k}|\varphi_0||\varphi_1|\leq 1, 
\end{equation}
and there exists $C>0$ such that 
\begin{equation}\label{aavarphi1g} \sum_{k=1}^\infty A_k^2\int_{\tau_k}|\varphi_1|^2|g|^2{\mathrm d}m\leq C\|g\|^2
\ \text{ for every }  g\in\mathcal K_{\mathbf v}.
\end{equation}

There exists an outer function $\varphi\in H^2$ such that $$|\varphi|=\begin{cases} |\varphi_1| & m\text{-a.e. on } 
\mathbb T\setminus\cup_{k=1}^\infty \tau_k, \\ 
A_k|\varphi_1|&  m\text{-a.e. on } \tau_k, \ \ k\geq 1.\end{cases}$$ 
It is easy to see that $\varphi$ satisfies to the conclusion of the lemma. 
Indeed, $\varphi\in H^2$ by \eqref{aavarphi1}. 
The inclusion $\varphi\varphi_0\in H^\infty$ follows from   the inclusion $\varphi_0\varphi_1\in H^\infty$ and \eqref{aakdeltak}. Since $A_k\to\infty$ and  $m(\tau_k)>0$ for all $k\geq 1$, we have 
$\varphi/\varphi_1\not\in H^\infty$. 
Let $g\in\mathcal K_{\mathbf v}$. Then
\begin{equation*}\begin{aligned}\int_{\mathbb T}|\varphi|^2|g|^2{\mathrm d}m &
 =\int_{\mathbb T\setminus\cup_{k=1}^\infty \tau_k}|\varphi|^2|g|^2{\mathrm d}m+
\sum_{k=1}^\infty \int_{\tau_k}|\varphi|^2|g|^2{\mathrm d}m\\&=
\int_{\mathbb T\setminus\cup_{k=1}^\infty \tau_k}|\varphi_1|^2|g|^2{\mathrm d}m+
\sum_{k=1}^\infty A_k^2 \int_{\tau_k}|\varphi_1|^2|g|^2{\mathrm d}m.\end{aligned}\end{equation*}
The first summand is finite by the assumption on $\varphi_1$, while the second summand is finite by \eqref{aavarphi1g}.
\end{proof}

\begin{theorem} \label{thmssmain} Suppose that ${\mathbf u}$, ${\mathbf v}$ and $\varphi_0$ satisfy the assumptions of Proposition 
\ref{propdeftt} and Lemma \ref{lemssmain}. Namely, ${\mathbf u}$, ${\mathbf v}$ are inner functions, $\varphi_0\in H^\infty$, 
$1/\varphi_0\not\in H^2$, \eqref{closvarphi0} is fulfilled and $m(E_{\mathbf v})=1$, where $E_{\mathbf v}$ is defined by 
\eqref{eev}. Define $T$ as in Proposition  \ref{propdeftt}. Then $T\sim S$ and $T$ does not have an isometric asymptote.
\end{theorem}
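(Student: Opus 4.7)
The quasisimilarity $T\sim S$ is immediate from Proposition~\ref{propdeftt}: the operators $X$ and $Y$ constructed there are quasiaffinities with $XT=SX$ and $YS=TY$, giving $S\prec T\prec S$.

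For the second claim, I would argue by contradiction. Suppose $T$ has an isometric asymptote $(W,V)$. Since $S\prec T\prec S$, Lemma~\ref{lemshift} (with $R=S$ and $n=1$) forces $V\cong S$, so after a unitary identification I may take $V=S$ and $W\in\mathcal L(H^2)$ with $WT=SW$. By Proposition~\ref{propdeftt}, such a $W$ is given by formula \eqref{zzdef} for some nonzero $\psi_W\in H^\infty$ satisfying $(\psi_W/\varphi_0)\mathcal K_{\mathbf v}\subset H^2$.

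Next I would apply Lemma~\ref{lemssmain} with $\varphi_1=\psi_W$. The hypotheses of the lemma hold: $\psi_W\in H^\infty\subset H^2$; $\varphi_0\psi_W\in H^\infty$ since both factors lie in $H^\infty$; and $\psi_W\mathcal K_{\mathbf v}\subset H^2$ since $\psi_W\in H^\infty$. The lemma produces $\varphi\in H^2$ with $\varphi\varphi_0\in H^\infty$, $\varphi\mathcal K_{\mathbf v}\subset H^2$, and $\varphi/\psi_W\notin H^\infty$. Setting $\psi:=\varphi\varphi_0\in H^\infty$, the converse part of Proposition~\ref{propdeftt} (the hypothesis $\psi/\varphi_0\cdot\mathcal K_{\mathbf v}=\varphi\mathcal K_{\mathbf v}\subset H^2$ is met) supplies $Z\in\mathcal L(H^2)$ with $ZT=SZ$ and $Z(\mathbf v h+g)={\mathbf u}\varphi\varphi_0\, h+\varphi\, g$.

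Now I would use the universal property of the asymptote: there exists $A\in\mathcal L(H^2)$ commuting with $S$, hence $A=a(S)$ with $a\in H^\infty$, such that $Z=a(S)W$. Evaluating at $f={\mathbf v}$ gives ${\mathbf u}\varphi\varphi_0=Z({\mathbf v})=a(S)W({\mathbf v})=a\,{\mathbf u}\psi_W$, so
\[
\varphi\varphi_0=a\,\psi_W \quad\text{in } H^\infty.
\]
Since $\varphi$ (as produced by Lemma~\ref{lemssmain}) and $\varphi_0$ are outer, the left-hand side is outer; hence $\psi_W$ is outer and $a=\varphi\varphi_0/\psi_W$ is a well-defined analytic function in $\mathbb D$ (no cancellation of zeros needed). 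Rearranging, $\varphi/\psi_W=a/\varphi_0$ as boundary functions.

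The main obstacle, which I expect to occupy the bulk of the work, is extracting a contradiction from the last displayed equation together with the conclusion $\varphi/\psi_W\notin H^\infty$ of Lemma~\ref{lemssmain}. The idea is that the explicit construction in the lemma controls $|\varphi/\psi_W|$ on the sets $\tau_k$: one has $|\varphi\varphi_0/\psi_W|=A_k|\varphi_0|$ there, together with $A_k|\varphi_0\psi_W|\approx 1$, so that $|a|=A_k|\varphi_0|\asymp 1/|\psi_W|$ on $\tau_k$. Refining the choice of $\tau_k$ within $\tau_{1k}$ so that $|\psi_W|$ is made as small as the $L^2$-smallness condition $\int_{\tau_k}|\psi_W|^2\le\varepsilon_k$ permits (possibly passing to a subsequence), one pushes $|a|$ to be essentially unbounded on $\bigcup_k\tau_k$, contradicting $a\in H^\infty$. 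Completing this quantitative refinement of the lemma's construction is the step where I expect to spend the most care.
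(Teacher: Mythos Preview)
Your proposal has the right architecture but makes one wrong choice that prevents the argument from closing, and the ``main obstacle'' you anticipate is a symptom of that choice rather than a genuine difficulty.

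You apply Lemma~\ref{lemssmain} with $\varphi_1=\psi_W$. The correct input is $\varphi_1=\psi_W/\varphi_0$. This function satisfies the hypotheses of the lemma: condition~\eqref{psiv} for $\psi_W$ says precisely that $\varphi_1\mathcal K_{\mathbf v}\subset H^2$; the product $\varphi_0\varphi_1=\psi_W\in H^\infty$; and $\varphi_1\in H^2$ because $\mathcal K_{\mathbf v}$ contains a function that is bounded together with its reciprocal (e.g.\ $1-\mathbf v$), so $\varphi_1\mathcal K_{\mathbf v}\subset H^2$ forces $\varphi_1\in H^2$. With this choice the lemma yields $\varphi$ with $\varphi\varphi_0\in H^\infty$, $\varphi\mathcal K_{\mathbf v}\subset H^2$, and $\varphi/\varphi_1=\varphi\varphi_0/\psi_W\notin H^\infty$. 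You then build $Z$ from $\psi:=\varphi\varphi_0$ exactly as you did, get $a\in H^\infty$ with $Z=a(S)W$, and the identity $ZY=\psi(S)$, $WY=\psi_W(S)$ (equation~\eqref{zzyy}) gives $\psi=a\,\psi_W$. Hence $a=\varphi\varphi_0/\psi_W=\varphi/\varphi_1\notin H^\infty$, an immediate contradiction.

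With your choice $\varphi_1=\psi_W$, the conclusion $\varphi/\psi_W\notin H^\infty$ combined with $\varphi\varphi_0=a\psi_W$ only gives $\varphi/\psi_W=a/\varphi_0$, and since $1/\varphi_0$ is itself unbounded this is no contradiction. The quantitative salvage you sketch (going back into the construction of the $\tau_k$) is unnecessary and, as stated, does not obviously work: the relation $A_k|\varphi_0\psi_W|\approx 1$ you invoke is not what the lemma gives when $\varphi_1=\psi_W$; it gives $A_k|\varphi_0\psi_W|\le 1$ on $\tau_k$, which is a one-sided bound. The entire detour is avoided by the one-line change of $\varphi_1$.
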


\begin{proof} The relation $T\sim S$ is proved in Proposition  \ref{propdeftt}. If $T$ has an isometric asymptote, then, by  Lemmas \ref{lemfinite} or \ref{lemshift}, it must be $S$. 
 Assume that $X_1$ is a canonical intertwining mapping for $T$ and $S$. By Proposition  \ref{propdeftt}, there exists $\psi_1\in H^\infty$ 
which satisfies \eqref{psiv},  $X_1$ acts according to \eqref{zzdef} with $\psi_1$, and $X_1Y=\psi_1(S)$.  Set $\varphi_1=\psi_1/\varphi_0$. Then $\varphi_1$ satisfies the assumption of Lemma \ref{lemssmain}
(with $\varphi_0$ and $\mathbf v$). 
Indeed, $\varphi_1\in H^2$, because  $\varphi_1\mathcal K_{\mathbf v}\subset H^2$ and  there exist  
$f\in\mathcal K_{\mathbf v}$ such that $f$, $1/f\in H^\infty$.

Let $\varphi$ be from the conclusion of Lemma \ref{lemssmain}. 
Set $\psi=\varphi\varphi_0$ and define $Z$ by \eqref{zzdef}. Then there exists $\psi_0\in H^\infty$ such that 
  $Z=\psi_0(S)X_1$. By \eqref{zzyy}, $$\psi(S)=ZY=\psi_0(S)X_1Y=\psi_0(S)\psi_1(S).$$
Therefore, $\psi=\psi_0\psi_1$. Consequently, $\varphi=\psi_0\varphi_1$, which  contradicts to the relation
 $\varphi/\varphi_1\not\in H^\infty$.
\end{proof}

For examples of ${\mathbf u}$, ${\mathbf v}$ and $\varphi_0$ satisfing the assumptions of Theorem \ref{thmssmain}
we refer to \cite{fhr}. One of examples are some concrete functions from {\cite[Theorem 6.14]{fhr}}. Equality  \eqref{closvarphi0} 
follows from (ii) in the proof of  {\cite[Theorem 6.14]{fhr}}. The relations $\varphi_0\in H^\infty$ and  
$1/\varphi_0\not\in H^2$ follow from the estimate 
$$|\varphi_0(\zeta)|\asymp \Bigl(1+\Bigl|\frac{1+\zeta}{1-\zeta}\Bigr|\Bigl)^{-1/2} \ \text{ for } 1\neq\zeta\in\mathbb T,$$
which follows from the estimate of the correspondent function in the upper half plane obtained in the proof of 
{\cite[Theorem 6.14]{fhr}}.

In the following example   {\cite[Section 7]{fhr}} is used. 

Let $\sigma$ be  a singular (with respect to $m$) positive Borel  measure on $\mathbb T$, and let $\sigma(\mathbb T)=1$. 
Then the function ${\mathbf u}$ defined by the formula
\begin{equation}
\label{yy0}
\frac{1}{1-{\mathbf u}(z)}=\int_{\mathbb T}\frac{1}{1-z\overline\zeta}\text{\rm d}\sigma(\zeta),  \ \ z\in\mathbb D,\end{equation}
is inner, and ${\mathbf u}(0)=0$. 
Conversely, for every inner function ${\mathbf u}$ such that ${\mathbf u}(0)=0$ there exists 
a singular positive Borel measure $\sigma_{\mathbf u}$ on $\mathbb T$ called the \emph{Clark measure} such that 
$\sigma_{\mathbf u}(\mathbb T)=1$ and \eqref{yy0} is fulfilled with $\sigma=\sigma_{\mathbf u}$.  
It is easy to see that ${\mathbf u}$ has analytic continuation on $\mathbb C\setminus\operatorname{supp}\sigma_{\mathbf u}$, 
where $\operatorname{supp}\sigma_{\mathbf u}$ is the \emph{closed} support of $\sigma_{\mathbf u}$. 

Every function $f\in\mathcal K_{\mathbf u}$ has nontangential boundary values $f(\zeta)$ for $\sigma_{\mathbf u}$-a.e. 
 $\zeta\in\mathbb T$. The transformation 
$V_{\mathbf u}\in\mathcal L(L^2(\mathbb T,\sigma_{\mathbf u}),\mathcal K_{\mathbf u} )$ defined by the formula  
$$(V_{\mathbf u}\gamma)(z)=(1-{\mathbf u}(z))\int_{\mathbb T}\frac{\gamma(\zeta)}{1-z\overline\zeta}\text{\rm d}\sigma_{\mathbf u}(\zeta),  
\ \ z\in\mathbb D,\ \ \gamma\in L^2(\mathbb T,\sigma_{\mathbf u}), $$
is unitary, and 
$$ (V_{\mathbf u}^{-1}f)(\zeta)=f(\zeta) \ \ \sigma_{\mathbf u}\text{-a.e. }\ \zeta\in\mathbb T, 
 \ \   f\in\mathcal K_{\mathbf u}.$$
For references, see \cite{clark} and \cite{polt},  also {\cite[Ch. 9 and Theorem 10.3.1]{cauchy}}, {\cite[Sec. 8]{model}} and references therein.  

\begin{lemma}\label{lemfhr}{\cite[Sec. 7]{fhr}} Suppose that ${\mathbf u}$ and ${\mathbf v}$ are two inner functions, 
 ${\mathbf u}(0)={\mathbf v}(0)=0$,  $\mathbf h\in L^\infty(\mathbb T,\sigma_{\mathbf v})$ is such that \begin{equation}\label{neq0}{\mathbf h}> 0 \ \ \sigma_{\mathbf v}\text{-a.e. on }\mathbb T
\end{equation}
 and \begin{equation}\label{uvh}\sigma_{\mathbf u}={\mathbf h}\sigma_{\mathbf v}. \end{equation}
Set \begin{equation}\label{uvphi}\varphi_0=\frac{1-{\mathbf v}}{1-{\mathbf u}}.\end{equation} 
Then \eqref{closvarphi0} is fulfilled, and $1/\varphi_0\in H^2$ if and only if $1/{\mathbf h}\in L^1(\mathbb T,\sigma_{\mathbf v})$.
\end{lemma}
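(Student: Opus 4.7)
My plan is to use the Clark unitary transformations $V_{\mathbf u}\colon L^2(\mathbb T,\sigma_{\mathbf u})\to\mathcal K_{\mathbf u}$ and $V_{\mathbf v}\colon L^2(\mathbb T,\sigma_{\mathbf v})\to\mathcal K_{\mathbf v}$ recalled before the lemma as the bridge between the measures $\sigma_{\mathbf u}$, $\sigma_{\mathbf v}$ and the model spaces. First I would establish the key identity
$$\varphi_0\cdot V_{\mathbf u}(\gamma)=V_{\mathbf v}(\mathbf h\gamma),\qquad \gamma\in L^2(\mathbb T,\sigma_{\mathbf u}),$$
where $\mathbf h\gamma$ lies in $L^2(\sigma_{\mathbf v})$ because $\int|\mathbf h\gamma|^2\,\mathrm d\sigma_{\mathbf v}=\int\mathbf h|\gamma|^2\,\mathrm d\sigma_{\mathbf u}\le\|\mathbf h\|_{L^\infty(\sigma_{\mathbf v})}\|\gamma\|^2_{L^2(\sigma_{\mathbf u})}$. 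The identity itself follows by substituting $\varphi_0=(1-\mathbf v)/(1-\mathbf u)$ into $\varphi_0(z)V_{\mathbf u}(\gamma)(z)$ and rewriting the Cauchy integral via $\mathrm d\sigma_{\mathbf u}=\mathbf h\,\mathrm d\sigma_{\mathbf v}$. With this in hand $\varphi_0\mathcal K_{\mathbf u}=V_{\mathbf v}(\mathbf h\cdot L^2(\sigma_{\mathbf u}))$, so \eqref{closvarphi0} reduces to density of $\mathbf h\cdot L^2(\sigma_{\mathbf u})$ in $L^2(\sigma_{\mathbf v})$, which I would verify by writing any $\rho\in L^\infty(\sigma_{\mathbf v})$ supported in $\{\mathbf h\ge\varepsilon\}$ as $\mathbf h\gamma$ with $\gamma=\rho/\mathbf h\in L^\infty(\sigma_{\mathbf u})$, and letting $\varepsilon\to 0$ using $\mathbf h>0$ $\sigma_{\mathbf v}$-a.e.

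For the forward direction of the second claim, if $1/\mathbf h\in L^1(\sigma_{\mathbf v})$ then $\int|1/\mathbf h|^2\,\mathrm d\sigma_{\mathbf u}=\int 1/\mathbf h\,\mathrm d\sigma_{\mathbf v}<\infty$ yields $1/\mathbf h\in L^2(\sigma_{\mathbf u})$; and since $V_{\mathbf v}(1)=1$ by \eqref{yy0}, the key identity with $\gamma=1/\mathbf h$ produces $\varphi_0\cdot V_{\mathbf u}(1/\mathbf h)=1$, so $1/\varphi_0=V_{\mathbf u}(1/\mathbf h)\in\mathcal K_{\mathbf u}\subset H^2$.

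The converse is the step I expect to be the hard one, and it hinges on the algebraic boundary identity, valid $m$-a.e. on $\mathbb T$ via $\bar{\mathbf u}=1/\mathbf u$, $\bar{\mathbf v}=1/\mathbf v$:
$$\mathbf u\,\overline{(1/\varphi_0)}=\mathbf u\cdot\frac{1-1/\mathbf u}{1-1/\mathbf v}=\mathbf u\cdot\frac{(\mathbf u-1)\mathbf v}{\mathbf u(\mathbf v-1)}=\mathbf v\cdot\frac{1-\mathbf u}{1-\mathbf v}=\mathbf v\cdot(1/\varphi_0).$$
Assuming $1/\varphi_0\in H^2$, this shows $\mathbf u\,\overline{(1/\varphi_0)}=\mathbf v\cdot(1/\varphi_0)\in H^2$, and since $\mathbf v(0)=0$ it lies in $\chi H^2$. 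By the standard characterization $f\in\mathcal K_{\mathbf u}\iff\mathbf u\bar f\in\chi H^2$, I conclude $1/\varphi_0\in\mathcal K_{\mathbf u}$. Setting $\delta=V_{\mathbf u}^{-1}(1/\varphi_0)\in L^2(\sigma_{\mathbf u})$ and applying the key identity to $\gamma=\delta$ yields $V_{\mathbf v}(\mathbf h\delta)=\varphi_0\cdot(1/\varphi_0)=1=V_{\mathbf v}(1)$; thus $\mathbf h\delta=1$ $\sigma_{\mathbf v}$-a.e., forcing $\delta=1/\mathbf h$ $\sigma_{\mathbf u}$-a.e. (since $\mathbf h>0$). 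Therefore $\int 1/\mathbf h\,\mathrm d\sigma_{\mathbf v}=\|\delta\|^2_{L^2(\sigma_{\mathbf u})}<\infty$. The main difficulty lies entirely in spotting that this algebraic identity promotes the analytic hypothesis $1/\varphi_0\in H^2$ to the geometric statement $1/\varphi_0\in\mathcal K_{\mathbf u}$; afterwards the key identity finishes the argument mechanically.
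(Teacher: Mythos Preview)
Your argument is correct. The core mechanism is the same as in the paper: both rest on the fact that, under the Clark unitaries, multiplication by $\varphi_0$ on model spaces corresponds to multiplication by $\mathbf h$ on the $L^2$ Clark spaces. In the paper this appears via the adjoint formula $M^*=V_{\mathbf u}\,J\,V_{\mathbf v}^{-1}$ (with $J$ the natural imbedding $L^2(\sigma_{\mathbf v})\hookrightarrow L^2(\sigma_{\mathbf u})$), quoted from \cite{fhr}; your ``key identity'' $\varphi_0\,V_{\mathbf u}(\gamma)=V_{\mathbf v}(\mathbf h\gamma)$ is precisely the dual statement, since $J^*$ is multiplication by $\mathbf h$. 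Consequently your density argument for \eqref{closvarphi0} and the paper's argument via $\ker J=\{0\}\Leftrightarrow\ker M^*=\{0\}$ are two faces of the same coin.

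The one place where you genuinely do more than the paper is the converse of the second claim. The paper simply invokes \cite[Remark 7.7]{fhr} with the roles of $\mathbf u$ and $\mathbf v$ swapped (using $\sigma_{\mathbf v}=(1/\mathbf h)\sigma_{\mathbf u}$) to obtain $1/\varphi_0\in H^2\Leftrightarrow 1/\mathbf h\in L^2(\sigma_{\mathbf u})$. You instead give a self-contained proof: the boundary identity $\mathbf u\,\overline{1/\varphi_0}=\mathbf v\cdot(1/\varphi_0)$ promotes $1/\varphi_0\in H^2$ to $1/\varphi_0\in\mathcal K_{\mathbf u}$, after which your key identity forces $V_{\mathbf u}^{-1}(1/\varphi_0)=1/\mathbf h$. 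This is a clean direct route and avoids the external citation; the paper's version is shorter only because it outsources exactly this step to \cite{fhr}.
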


\begin{proof} 
By {\cite[Corollary 7.6]{fhr}}, $\varphi_0$ is a multiplier from $\mathcal K_{\mathbf u}$ to $\mathcal K_{\mathbf v}$. 
Denote by $M$ the operator of multiplication by $\varphi_0$, then $M\in\mathcal L(\mathcal K_{\mathbf u},\mathcal K_{\mathbf v})$. Denote by $J$ the natural imbedding of $L^2(\mathbb T,\sigma_{\mathbf v})$ into $ L^2(\mathbb T,\sigma_{\mathbf u})$. 
By the proof of  {\cite[Theorem 7.3]{fhr}}, $M^*=V_{\mathbf u}J V_{\mathbf v}^{-1} $. 
By \eqref{neq0}, $\ker J=\{0\}$, which is equivalent to the equality $\ker M^*=\{0\}$, 
and   \eqref{closvarphi0} follows. 

It follows from \eqref{neq0} that  $\sigma_{\mathbf v}=(1/{\mathbf h})\sigma_{\mathbf u}$. By {\cite[Remark 7.7]{fhr}} 
(applied to ${\mathbf v}$, ${\mathbf u}$ instead of ${\mathbf u}$, ${\mathbf v}$), $1/\varphi_0\in H^2$ if and only if 
$1/{\mathbf h}\in L^2(\mathbb T,\sigma_{\mathbf u})$, which is equivalent to the inclusion 
 $1/{\mathbf h}\in L^1(\mathbb T,\sigma_{\mathbf v})$. 
\end{proof}

\begin{lemma}\label{lemlip} Suppose that ${\mathbf u}$ and ${\mathbf v}$ are two inner functions, 
 ${\mathbf u}(0)={\mathbf v}(0)=0$, $C>0$, and $\mathbf h\colon\overline{\mathbb D}\to[0,+\infty)$ is a function 
such that 
\begin{equation}\label{lip}|{\mathbf h}(z)-{\mathbf h}(\zeta)|\leq C|z-\zeta| \ \text{ for all }z,\  \zeta\in\overline{\mathbb D}. 
\end{equation}
Furthermore, suppose that \eqref{uvh} is fulfilled, and $\varphi_0$ is defined by \eqref{uvphi}. 
Then $\varphi_0\in H^\infty$. 
\end{lemma}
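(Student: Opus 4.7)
The plan is to write $\varphi_0$ in a form that directly exposes the Lipschitz property of $\mathbf h$. The function $\varphi_0$ is analytic on $\mathbb D$ (the denominator $1-\mathbf u$ is zero-free there since $\mathbf u$ is inner), so the only issue is a uniform bound on $|\varphi_0|$.

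First I would use the Clark formula \eqref{yy0} applied to both inner functions, together with \eqref{uvh}, to express
\begin{equation*}
\frac{1}{1-\mathbf u(z)} = \int_{\mathbb T}\frac{\mathbf h(\zeta)}{1-z\overline\zeta}\,\mathrm d\sigma_{\mathbf v}(\zeta),\qquad
\frac{1}{1-\mathbf v(z)} = \int_{\mathbb T}\frac{1}{1-z\overline\zeta}\,\mathrm d\sigma_{\mathbf v}(\zeta).
\end{equation*}
Multiplying the first identity by $1-\mathbf v(z)$ and splitting $\mathbf h(\zeta)=\mathbf h(z)+(\mathbf h(\zeta)-\mathbf h(z))$, I get
\begin{equation*}
\varphi_0(z)=\mathbf h(z)+(1-\mathbf v(z))\int_{\mathbb T}\frac{\mathbf h(\zeta)-\mathbf h(z)}{1-z\overline\zeta}\,\mathrm d\sigma_{\mathbf v}(\zeta),
\end{equation*}
since the first summand collapses via the second Clark formula to $\mathbf h(z)(1-\mathbf v(z))\cdot\frac{1}{1-\mathbf v(z)}=\mathbf h(z)$.

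Next I would estimate the two pieces. The function $\mathbf h$ is Lipschitz on $\overline{\mathbb D}$ by \eqref{lip}, so in particular $|\mathbf h(z)|\leq \mathbf h(0)+C$ for all $z\in\overline{\mathbb D}$. For the integral, use $|1-z\overline\zeta|=|\zeta-z|$ when $\zeta\in\mathbb T$, together with \eqref{lip}, to obtain
\begin{equation*}
\left|\frac{\mathbf h(\zeta)-\mathbf h(z)}{1-z\overline\zeta}\right|\leq C,\qquad \zeta\in\mathbb T,\ z\in\mathbb D.
\end{equation*}
Since $\sigma_{\mathbf v}(\mathbb T)=1$ and $|1-\mathbf v(z)|\leq 2$ on $\mathbb D$, the integral term is bounded in modulus by $2C$. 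Combining the two estimates gives $\|\varphi_0\|_\infty \leq \mathbf h(0)+3C$, which yields $\varphi_0\in H^\infty$.

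No step looks hard here; the only real content is the decomposition trick that turns $\varphi_0$ into $\mathbf h(z)$ plus a Cauchy-type integral whose kernel is tamed by the Lipschitz cancellation. The reason this works is the cancellation $|1-z\overline\zeta|=|\zeta-z|$ on $\mathbb T$ that matches the Lipschitz bound on $\mathbf h(\zeta)-\mathbf h(z)$.
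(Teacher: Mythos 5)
Your proof is correct and is essentially the paper's own argument: the same splitting $\mathbf h(\zeta)=\mathbf h(z)+(\mathbf h(\zeta)-\mathbf h(z))$ in the Clark representation of $1/(1-\mathbf u)$, with the kernel bound coming from $|1-z\overline\zeta|=|\zeta-z|$ and \eqref{lip}, and the remaining term collapsing to $\mathbf h(z)$ via \eqref{yy0}. You even make explicit two points the paper leaves implicit (analyticity of $\varphi_0$ on $\mathbb D$ and boundedness of $\mathbf h$), so nothing further is needed.
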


\begin{proof} By \eqref{yy0},
\begin{equation*}
\begin{aligned}\varphi_0(z)& =(1-{\mathbf v}(z))
\int_{\mathbb T}\frac{{\mathbf h}(\zeta)\text{\rm d}\sigma_{\mathbf v}(\zeta)}{1-z\overline\zeta}\\&
=(1-{\mathbf v}(z))\int_{\mathbb T}
\frac{{\mathbf h}(\zeta)-{\mathbf h}(z)}{\zeta-z}\zeta\text{\rm d}\sigma_{\mathbf v}(\zeta)
\\&\quad+{\mathbf h}(z)(1-{\mathbf v}(z))
\int_{\mathbb T}\frac{\text{\rm d}\sigma_{\mathbf v}(\zeta)}{1-z\overline\zeta},   
\ \   \ \ z\in\mathbb D.\end{aligned}\end{equation*}
The first summand is bounded due to \eqref{lip}, while the second summand is equal to ${\mathbf h}(z)$ by \eqref{yy0}. 
\end{proof}

Lemmas \ref{lemfhr} and \ref{lemlip} allow to construct ${\mathbf u}$, ${\mathbf v}$ and $\varphi_0$ satisfying 
the assumptions of Theorem \ref{thmssmain}. For example, take ${\mathbf h}_1$ satisfying \eqref{lip} such that 
$ {\mathbf h}_1(z)\neq 0$ for $z\in\mathbb T\setminus\{1\}$, and   ${\mathbf h}_1(1)=0$. 
Let $\{\zeta_n\}_{n=1}^\infty\subset\mathbb T\setminus\{1\}$ 
be such that $\zeta_n\neq\zeta_k$, if $n\neq k$, and $\zeta_n\to 1$. There exists a sequence $\{a_n\}_{n=1}^\infty$ such that 
$a_n>0$ for all $n\geq 1$, $\sum_{n=1}^\infty a_n=1$ and $$\sum_{n=1}^\infty \frac{a_n}{{\mathbf h}_1(\zeta_n)}= \infty.$$
Set $\sigma_{\mathbf v}=\sum_{n=1}^\infty a_n{\boldsymbol\delta}_{\zeta_n}$ and 
$$\sigma_{\mathbf u} = \frac{1}{\sum_{n=1}^\infty a_n{\mathbf h}_1(\zeta_n)} \sum_{n=1}^\infty a_n{\mathbf h}_1(\zeta_n){\boldsymbol\delta}_{\zeta_n},$$
where ${\boldsymbol\delta}_{\zeta}$ is the Dirac measure at $\zeta$.  Then ${\mathbf u}$, ${\mathbf v}$ and $\varphi_0$ defined by \eqref{yy0} and \eqref{uvphi} satisfy 
the assumptions of Theorem \ref{thmssmain}.

\section{On isometric asymptote of contractions quasisimilar to the unilateral shift of infinite multiplicity. }

In this section, an example of an operator $T$ (on a Hilbert space) will be constructed such that $T$ is similar to a contraction, $T\sim S_\infty$, and $S_\infty$ is not an isometric asymptote of $T$. 

Let $T\in\mathcal L(\mathcal H)$ have a unitary asymptote $(X,U)$, where $U\in\mathcal L(\mathcal K)$. 
For $ \mathcal E\subset \mathcal K$ set 
$$X^{-1}\mathcal E =\{x\in\mathcal H\ :\ Xx\in\mathcal E \}.$$
Then $X^{-1}\mathcal E\in\operatorname{Hlat}T$ for every $\mathcal E\in\operatorname{Hlat}U$. 
It is well known that  $\operatorname{Hlat}U\neq\{\{0\},\mathcal K\}$, if $U$ is not the multiplication by a unimodular constant on 
 $\mathcal K$. But it is possible that 
\begin{equation}\label{defquasi} X^{-1}\mathcal E =\{0\} 
\ \text{ for every } \mathcal E\in\operatorname{Hlat}U \text{ such that }  \mathcal E\neq\mathcal K.
\end{equation}
Such an operator $T$ is called \emph{quasianalytic}. For references, see \cite{ker16},  \cite{kerntuple}, \cite{kersz} and references therein.

\begin{proposition}\label{propjjttrr} 
Suppose that $\mathcal H_0$, $\mathcal K$, $\mathcal G$ are Hilbert spaces, 
$R\in\mathcal L(\mathcal K)$, $U\in\mathcal L(\mathcal G)$ is unitary,  $(Y, U)$ is a unitary asymptote of $R$, 
and there exists $\mathcal E\in \operatorname{Hlat}U$ such that 
$$\{0\}\neq\mathcal M:=Y^{-1}\mathcal E=\{y\in\mathcal K\ :\ Yy\in \mathcal E\}\neq\mathcal K.$$ 
Set $$\mathcal K_0=\mathcal K\ominus \mathcal M \ \text{ and }\  R_0=P_{\mathcal K_0}R|_{\mathcal K_0}.$$
Suppose that 
$T_0\in\mathcal L(\mathcal H_0)$ is a quasianalytic operator,  and 
 $J\in\mathcal L(\mathcal H_0,\mathcal K)$ is a quasiaffinity such that $JT_0=RJ$ and 
$(YJ, U)$ is a unitary asymptote of $T_0$.
Then $P_{\mathcal K_0}J$ is a quasiaffinity and  $P_{\mathcal K_0}JT_0=R_0P_{\mathcal K_0}J$. 
\end{proposition}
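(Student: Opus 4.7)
The plan is to verify two things: the intertwining identity $P_{\mathcal K_0}J\,T_0=R_0\,P_{\mathcal K_0}J$, and that $P_{\mathcal K_0}J$ is a quasiaffinity (injective with dense range).

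First I would observe that $\mathcal M\in\operatorname{Lat}R$. Indeed, $\mathcal E\in\operatorname{Hlat}U$ is in particular invariant for $U$, so if $y\in\mathcal M$ then $YRy=UYy\in U\mathcal E\subset\mathcal E$, i.e.\ $Ry\in\mathcal M$. Splitting $\mathcal K=\mathcal M\oplus\mathcal K_0$ and writing $k=k_{\mathcal M}+k_0$, invariance of $\mathcal M$ yields $P_{\mathcal K_0}Rk_{\mathcal M}=0$, hence $P_{\mathcal K_0}R=R_0P_{\mathcal K_0}$. Combining this with $JT_0=RJ$ gives
\[
P_{\mathcal K_0}J\,T_0=P_{\mathcal K_0}RJ=R_0\,P_{\mathcal K_0}J,
\]
which is the intertwining relation.

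For density of the range, since $J$ is a quasiaffinity, $J\mathcal H_0$ is dense in $\mathcal K$; continuity and surjectivity of the orthogonal projection $P_{\mathcal K_0}\colon\mathcal K\to\mathcal K_0$ then give $\operatorname{clos}P_{\mathcal K_0}J\mathcal H_0=\mathcal K_0$.

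The main step — and where I expect the subtlety to lie — is injectivity. I plan to show it by exploiting quasianalyticity of $T_0$ with respect to its canonical intertwining map $YJ$. If $x\in\mathcal H_0$ and $P_{\mathcal K_0}Jx=0$, then $Jx\in\mathcal M$, so $YJx\in\mathcal E$, meaning $x\in(YJ)^{-1}\mathcal E$. By the quasianalyticity property \eqref{defquasi} applied to $T_0$ with unitary asymptote $(YJ,U)$, it suffices to know that $\mathcal E\neq\mathcal G$ (the ambient space of $U$). To see this, recall that $\mathcal E\in\operatorname{Hlat}U$ is reducing for $U$; if one had $Y\mathcal K\subset\mathcal E$, then $U^{-n}Y\mathcal K\subset\mathcal E$ for all $n\geq 0$, and the minimality of the unitary asymptote, $\vee_{n\geq 0}U^{-n}Y\mathcal K=\mathcal G$, would force $\mathcal E=\mathcal G$, hence $\mathcal M=Y^{-1}\mathcal E=\mathcal K$, contradicting the hypothesis $\mathcal M\neq\mathcal K$. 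Thus $\mathcal E\neq\mathcal G$, quasianalyticity applies, $(YJ)^{-1}\mathcal E=\{0\}$, and so $x=0$. This completes the proof.
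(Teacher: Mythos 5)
Your proof is correct and follows essentially the same route as the paper: intertwining and density of the range are inherited from $J$ (via $\mathcal M\in\operatorname{Lat}R$), and injectivity comes from quasianalyticity of $T_0$ with respect to $(YJ,U)$ once one notes $\mathcal E\neq\mathcal G$. Your detour through minimality to get $\mathcal E\neq\mathcal G$ is unnecessary but harmless — $\mathcal M\neq\mathcal K$ already gives a vector $y$ with $Yy\in\mathcal G\setminus\mathcal E$, which is exactly the paper's one-line justification.
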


\begin{proof} The density of the range and the  intertwining property of $P_{\mathcal K_0}J$ follow from 
the density of the  range and the  intertwining property of $J$.
Since $$\ker P_{\mathcal K_0}J=\{x\in\mathcal H_0\ :\ Jx\in\mathcal M\}=
\{x\in\mathcal H_0\ :\ YJx\in\mathcal E\}$$ 
and $\mathcal E\neq \mathcal G$ (because $\mathcal M\neq\mathcal K$), the quasianalyticity of $T_0$ implies that  $\ker P_{\mathcal K_0}J=\{0\}.$ 
\end{proof}

\begin{lemma}\label{lemnnee} Suppose that $U$ is a unitary operator, $V$ is an isometry,  
$\mathcal E\in \operatorname{Lat}U$ is such that $U|_{\mathcal E}$ is a reductive unitary operator, and  
$\mathcal N\in \operatorname{Lat}(V\oplus U)$ is such that $\mathcal N\cap\mathcal E\neq\{0\}$. 
Then $ (V\oplus U)|_{\mathcal N}$ is not a unilateral shift  (of finite or infinite multiplicity).
\end{lemma}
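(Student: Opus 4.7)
The plan is to produce a non-zero reducing subspace inside a would-be unilateral shift and derive a contradiction via Wold decomposition (equivalently, via class $C_{\cdot 0}$). Let $U$ act on $\mathcal K_U$ and $V$ on $\mathcal K_V$, so $V\oplus U$ acts on $\mathcal K_V\oplus\mathcal K_U$; identify $\mathcal E$ with $\{0\}\oplus\mathcal E\subset\mathcal K_V\oplus\mathcal K_U$. Set
\[ \mathcal N_0=\mathcal N\cap\mathcal E. \]
By hypothesis $\mathcal N_0\neq\{0\}$. Both $\mathcal N$ and $\{0\}\oplus\mathcal E$ are invariant under $V\oplus U$, so $\mathcal N_0$ is invariant, and since $\mathcal N_0\subset\mathcal E$ we have $(V\oplus U)|_{\mathcal N_0}=U|_{\mathcal N_0}$.

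Next I would invoke the reductivity of $U|_{\mathcal E}$. The subspace $\mathcal N_0$ lies in $\operatorname{Lat}(U|_{\mathcal E})$; reductivity then gives that $\mathcal N_0$ is reducing for $U|_{\mathcal E}$, hence $U|_{\mathcal N_0}$ is a unitary operator on $\mathcal N_0$. Equivalently, $(V\oplus U)\mathcal N_0=\mathcal N_0$.

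Now suppose toward contradiction that $A:=(V\oplus U)|_{\mathcal N}$ is a unilateral shift (necessarily of positive multiplicity, since $\mathcal N\supset\mathcal N_0\neq\{0\}$). In particular $A$ is an isometry, so $A^*A=I_{\mathcal N}$. For any $y\in\mathcal N_0$, surjectivity of $A|_{\mathcal N_0}$ gives $z\in\mathcal N_0$ with $Az=y$, whence
\[ A^*y=A^*Az=z\in\mathcal N_0. \]
Thus $\mathcal N_0$ is a reducing subspace of $A$, and $A^*|_{\mathcal N_0}=(A|_{\mathcal N_0})^{-1}$ is unitary.

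The contradiction is then immediate from the fact that a unilateral shift belongs to class $C_{\cdot 0}$: for every $x\in\mathcal N$, $\|A^{*k}x\|\to 0$. But for $0\neq x\in\mathcal N_0$ we have $\|A^{*k}x\|=\|x\|$ for all $k$, forcing $x=0$. Hence $\mathcal N_0=\{0\}$, contradicting $\mathcal N\cap\mathcal E\neq\{0\}$. There is no real obstacle here; the only thing to be careful with is the bookkeeping that $\mathcal N_0$ becomes \emph{reducing} (not merely invariant) for $A$, which is where reductivity of $U|_{\mathcal E}$ and the isometry identity $A^*A=I$ combine.
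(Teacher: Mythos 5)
Your proof is correct and follows essentially the same route as the paper: restrict $V\oplus U$ to the nonzero invariant subspace $\mathcal N\cap\mathcal E$, use reductivity of $U|_{\mathcal E}$ to see that this restriction is unitary, and conclude that a unilateral shift (being of class $C_{\cdot 0}$) cannot have such a restriction. The only difference is that you spell out the final step (showing $\mathcal N\cap\mathcal E$ is reducing for the shift via $A^*A=I$ and then contradicting $\|A^{*k}x\|\to 0$), which the paper leaves implicit.
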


\begin{proof} Clearly,  $(V\oplus U)|_{\mathcal N\cap\mathcal E}=U|_{\mathcal N\cap\mathcal E}$.  
Since $U|_{\mathcal E}$ is  reductive,  $U|_{\mathcal N\cap\mathcal E}$  is a non-zero unitary. Therefore, 
$ (V\oplus U)|_{\mathcal N}$ is not a unilateral shift. 
\end{proof}

The following theorem is a particular case of {\cite[Theorem VI.2.3]{nfbk}}.

\begin{theoremcite}[{\cite[Theorem VI.2.3]{nfbk}}]\label{thmnfbk} Let $\mathcal K_0$ be a Hilbert space, and let 
$R_0\in\mathcal L(\mathcal K_0)$ be a contraction of class $C_{\cdot 0}$. Then there exists 
$K\in\mathcal L(\mathcal K_0,H^2_\infty)$ 
such that 
\begin{equation}\label{congss}V:=\begin{pmatrix}S_\infty & K\\ \mathbb O& R_0\end{pmatrix}\cong S_\infty.\end{equation}
\end{theoremcite}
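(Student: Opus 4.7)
The plan is to choose $K$ so that $V$ is an isometry of class $C_{\cdot 0}$ with $\dim\ker V^*=\infty$; by the Wold decomposition of an isometry, any such $V$ is unitarily equivalent to $S_\infty$.

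First I would write out $V^*V$ directly. The block calculation shows $V$ is isometric if and only if $S_\infty^*K=\mathbb O$ (equivalently $K\mathcal K_0\subset\ker S_\infty^*$) and $K^*K=I_{\mathcal K_0}-R_0^*R_0$. Setting $D_{R_0}=(I_{\mathcal K_0}-R_0^*R_0)^{1/2}$ and observing that $\dim\ker S_\infty^*=\infty$ while $\dim\overline{D_{R_0}\mathcal K_0}\leq\aleph_0$, I would pick an isometric embedding $J\colon\overline{D_{R_0}\mathcal K_0}\to\ker S_\infty^*$ whose image $J\overline{D_{R_0}\mathcal K_0}$ leaves an infinite-dimensional orthogonal complement inside $\ker S_\infty^*$. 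Setting $K=JD_{R_0}$ then fulfils both requirements, so $V$ is an isometry.

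The heart of the argument, and what I expect to be the main obstacle, is showing that $V$ has no unitary summand, equivalently that $\mathcal H_u:=\bigcap_{n\geq 0}V^n(H^2_\infty\oplus\mathcal K_0)=\{0\}$. I would take $y=(y_1,y_2)\in\mathcal H_u$ and, for each $n$, write $y=V^n(x_1^{(n)},x_2^{(n)})$. Since $V^n$ is isometric one has $\|x^{(n)}\|=\|y\|$, so $\{x_2^{(n)}\}$ is bounded; the block form of $V^n$ gives $y_2=R_0^n x_2^{(n)}$, hence for any $w\in\mathcal K_0$, $(y_2,w)=(x_2^{(n)},R_0^{*n}w)\to 0$ because $R_0\in C_{\cdot 0}$. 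Thus $y_2=0$, and $\mathcal H_u\subset H^2_\infty\oplus\{0\}$. Now $\mathcal H_u$ reduces $V$, and $V^*$ maps $(x_1,0)\in\mathcal H_u$ back into $H^2_\infty\oplus\{0\}$, so iterating, $V^{*n}(x_1,0)=(S_\infty^{*n}x_1,0)$. Since $V|_{\mathcal H_u}$ is unitary, $\|(S_\infty^{*n}x_1,0)\|=\|x_1\|$; but $S_\infty^{*n}x_1\to 0$ as $S_\infty\in C_{\cdot 0}$, forcing $x_1=0$ and $\mathcal H_u=\{0\}$.

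Finally, $\ker V^*$ contains $\{(x_1,0):x_1\in\ker S_\infty^*\cap\ker K^*\}$, and since $K^*=D_{R_0}J^*$ one has $\ker K^*\supset\ker J^*$, so $\ker V^*\supset\ker S_\infty^*\ominus J\overline{D_{R_0}\mathcal K_0}$, which by the choice of $J$ is infinite-dimensional. Combined with the previous step and the Wold decomposition, this gives $V\cong S_\infty$. Both hypotheses $R_0\in C_{\cdot 0}$ and $S_\infty\in C_{\cdot 0}$ enter only in the unitary-summand step; the rest is elementary.
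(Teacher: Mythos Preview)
The paper does not give its own proof of this statement; it is quoted verbatim as a particular case of \cite[Theorem~VI.2.3]{nfbk} and then used as a black box in Theorem~\ref{thm5}. So there is no paper proof to compare against, only the cited reference.

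Your argument is correct. What you have written is precisely the Sz.-Nagy isometric-lifting construction: with $K=JD_{R_0}$ mapping into $\ker S_\infty^*$, the operator $V$ is an isometric lifting of $R_0$ (indeed a non-minimal isometric dilation, padded with extra copies of $S$ so that $\dim\ker V^*=\infty$), and the $C_{\cdot 0}$ hypothesis on $R_0$ together with $S_\infty\in C_{\cdot 0}$ kills the unitary summand in the Wold decomposition. The one place to be careful is the iteration step in showing $\mathcal H_u=\{0\}$: you implicitly use that $V^*(x_1,0)=(S_\infty^*x_1,K^*x_1)$ must lie in $\mathcal H_u\subset H^2_\infty\oplus\{0\}$, forcing $K^*x_1=0$ at \emph{every} stage of the iteration; you state this correctly but it is worth spelling out that this relies on $\mathcal H_u$ being $V^*$-invariant (since it reduces $V$), so that each successive image stays in $H^2_\infty\oplus\{0\}$.

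This is exactly the content of the cited theorem in \cite{nfbk}, which characterizes $C_{\cdot 0}$ contractions as compressions of unilateral shifts to co-invariant subspaces; your $V$ exhibits that shift explicitly, with $\{0\}\oplus\mathcal K_0$ as the co-invariant subspace.
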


\begin{theorem}\label{thm5}
Suppose that $R$, $Y$, $T_0$, $J$ satisfy Proposition \ref{propjjttrr},  $R$ and $T_0$ are contractions  of class 
 $C_{10}$, 
and the pairs $R$, $Y$ and $T_0$, $YJ$ satisfy \eqref{lim}. 
Let  $R_0$ and $\mathcal K_0$ be from Proposition \ref{propjjttrr}. Let $0<c<1$,  and let   $X_0=cP_{\mathcal K_0}J$. 
Set $$T=\begin{pmatrix}S_\infty & KX_0\\ \mathbb O& T_0\end{pmatrix},$$
where $K$ is from Theorem \ref{thmnfbk} applied to $R_0$. Then $T$ is similar to a contraction, $T\sim S_\infty$, and 
 an isometric asymptote of $T$ contains  a (non-trivial) unitary summand. \end{theorem}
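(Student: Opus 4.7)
The approach exploits the intertwining between $T$ and the contraction $V=\begin{pmatrix}S_\infty&K\\0&R_0\end{pmatrix}\cong S_\infty$. Define $\tilde J=\operatorname{diag}(I_{H^2_\infty},X_0)\colon H^2_\infty\oplus\mathcal H_0\to H^2_\infty\oplus\mathcal K_0$; by Proposition~\ref{propjjttrr}, $X_0=cP_{\mathcal K_0}J$ is a quasiaffinity satisfying $X_0T_0=R_0X_0$, and a direct block-matrix computation gives $\tilde J T=V\tilde J$ with $\tilde J$ itself a quasiaffinity. Combined with Theorem~\ref{thmnfbk}, this immediately yields $T\prec V\cong S_\infty$, which is one direction of the quasisimilarity claim.

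For the claim that $T$ is similar to a contraction, I would construct an explicit invertible $A=\operatorname{diag}(I,A_2)$ on $H^2_\infty\oplus\mathcal H_0$ making $A^{-1}TA$ contractive. The Sz.-Nagy--Foias defect identities for $V$, namely $S_\infty^*K=0$ and $K^*K=I-R_0^*R_0$, ensure that the off-diagonal block of $(A^{-1}TA)^*(A^{-1}TA)$ vanishes; combined with $R_0X_0=X_0T_0$, the contraction condition reduces to an operator inequality on $\mathcal H_0$ involving $A_2$, $T_0$ and $X_0$. An appropriate choice of $A_2$ (for instance $A_2=(I-X_0^*X_0)^{-1/2}$, which is well-defined once $\|X_0\|<1$, ensured by the condition $0<c<1$ after suitable rescaling) then verifies the inequality.

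For the reverse direction $S_\infty\prec T$ of the quasisimilarity, I would construct a quasiaffinity $W\colon H^2_\infty\oplus\mathcal K_0\to H^2_\infty\oplus\mathcal H_0$ intertwining $V$ with $T$. A block upper-triangular ansatz reduces the intertwining to finding a quasiaffinity $W_{22}\colon\mathcal K_0\to\mathcal H_0$ satisfying $T_0W_{22}=W_{22}R_0$; the existence of such $W_{22}$ rests on $R$ and $T_0$ being $C_{10}$ contractions sharing the unitary asymptote $U$ together with the quasianalyticity of $T_0$, via the standard Sz.-Nagy--Foias correspondence between intertwiners and unitary asymptote level maps. Combined with the earlier direction, this gives $T\sim S_\infty$.

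For the unitary summand in the isometric asymptote, I would first compute via the explicit formula $T^n(f,h)=\bigl(S_\infty^n f+\sum_{k=0}^{n-1}S_\infty^{n-1-k}KR_0^kX_0h,\,T_0^nh\bigr)$. Since $V$ is an isometry, $\|\sum_{k=0}^{n-1}S_\infty^{n-1-k}KR_0^kX_0h\|^2\to\|X_0h\|^2$ as $\|R_0^nX_0h\|\to 0$; combined with \eqref{lim} applied to $(T_0,YJ)$, this yields
\begin{equation*}
\lim_{n\to\infty}\|T^n(0,h)\|^2=\|X_0h\|^2+\|YJh\|^2,\qquad h\in\mathcal H_0.
\end{equation*}
Similarity to a contraction (the first claim) provides a canonical intertwining $X_T$ with isometric asymptote $V_T$. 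The $\|YJh\|^2$ contribution transplants the action of the unitary $U$ on $\mathcal G$ into the unitary asymptote $U_T$ of $T$; since $\mathcal E\in\operatorname{Hlat}U$ is $U$-reductive and $Y^{-1}\mathcal E=\mathcal M\neq\{0\}$, a non-trivial $U$-reductive subspace sits inside the space of $U_T$, and by Lemma~\ref{lemnnee} the Wold decomposition $V_T=V_0\oplus V_1$ must have $V_1\neq 0$. The main obstacle is the similarity to a contraction: the quasiaffinity $\tilde J$ is not invertible, so contractivity of $V$ does not transfer to $T$ automatically, and the explicit construction of the similarity exploits both the Sz.-Nagy--Foias algebraic structure of $K$ and the scaling freedom $0<c<1$.
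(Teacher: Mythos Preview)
Your setup via the intertwining $\tilde J T=V\tilde J$ and the conclusion $T\prec S_\infty$ match the paper exactly. Beyond that, there are three genuine gaps.

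\textbf{Similarity to a contraction.} Your explicit choice $A_2=(I-X_0^*X_0)^{-1/2}$ does not verify the required inequality: after using $K^*K=I-R_0^*R_0$ and $R_0X_0=X_0T_0$, contractivity of $A^{-1}TA$ reduces to $T_0^*BT_0\le B$ with $B=A_2^{-2}-X_0^*X_0$, and your choice gives $B=I-2X_0^*X_0$, for which the inequality has no reason to hold. (The choice $A_2=(I+X_0^*X_0)^{-1/2}$, giving $B=I$, would work and needs no assumption on $\|X_0\|$; the condition $0<c<1$ is irrelevant here.) The paper avoids all of this: it reads off from $\tilde JT=V\tilde J$ that $T^n=\begin{pmatrix}S_\infty^n & K_nX_0\\ 0 & T_0^n\end{pmatrix}$ with $K_n$ coming from $V^n$, concludes $T$ is power bounded, and then invokes Cassier's similarity criterion.

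\textbf{The direction $S_\infty\prec T$.} Your block-triangular ansatz does \emph{not} reduce to finding $W_{22}$ alone: the $(1,2)$-entry forces you to solve $W_{12}R_0-S_\infty W_{12}=K(X_0W_{22}-I)$ as well, and there is no reason this equation is solvable. Even the existence of a quasiaffinity $W_{22}\colon\mathcal K_0\to\mathcal H_0$ with $T_0W_{22}=W_{22}R_0$ is not furnished by the hypotheses you cite. The paper sidesteps this entirely: once $T$ is similar to a contraction and $T\prec S_\infty$, Takahashi's theorem gives $T\sim S_\infty$ directly.

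\textbf{The asymptotic limit.} Your claim $\|R_0^nX_0h\|\to 0$ is false: $R_0$ is of class $C_{\cdot 0}$, not $C_{0\cdot}$. In fact $(P_{\mathcal G_0}Y|_{\mathcal K_0},U|_{\mathcal G_0})$ is a unitary asymptote of $R_0$ satisfying \eqref{lim}, so $\|R_0^nX_0h\|^2\to\|P_{\mathcal G_0}YX_0h\|^2=c^2\|P_{\mathcal G_0}YJh\|^2$, which is nonzero in general. Consequently your formula $\lim_n\|T^n(0,h)\|^2=\|X_0h\|^2+\|YJh\|^2$ is wrong; the correct limit is $\|X_0h\|^2+\|AYJh\|^2$ with $A=(1-c^2)^{1/2}P_{\mathcal G_0}\oplus P_{\mathcal E}$, and this is precisely where the hypothesis $0<c<1$ enters. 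The paper then builds an explicit map $X_1(h\oplus x)=h\oplus X_0x\oplus AYJx$ into $H^2_\infty\oplus\mathcal K_0\oplus\mathcal G$, identifies the isometric asymptote as $(V\oplus U)|_{\mathcal N}$ with $\mathcal N=\operatorname{clos}X_1(H^2_\infty\oplus\mathcal H_0)$, and shows $\mathcal N\cap\mathcal E\supset\operatorname{clos}Y\mathcal M\neq\{0\}$ by approximating elements of $\mathcal M$ through the dense range of $J$; Lemma~\ref{lemnnee} then finishes. Your sketch has the right destination but the wrong limit formula and no concrete mechanism for exhibiting the intersection $\mathcal N\cap\mathcal E$.
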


\begin{proof}
Denote by  $K_n\in\mathcal L(\mathcal K_0,H^2_\infty)$, $n\geq 1$, the transformations such that 
$$V^n=\begin{pmatrix}S_\infty & K\\ \mathbb O& R_0\end{pmatrix}^n =\begin{pmatrix}S_\infty^n & K_n\\ \mathbb O& R_0^n\end{pmatrix}. $$
By Proposition \ref{propjjttrr},  $X_0$  is a quasiaffinity, and  $X_0T_0=R_0X_0$. 
It follows from the definitions of $T$ and $V$ that 
 \begin{equation}\label{ttxx0}
\begin{pmatrix}I_{H^2_\infty} &\mathbb O \\  \mathbb O& X_0\end{pmatrix}T=V
\begin{pmatrix}I_{H^2_\infty} &\mathbb O \\  \mathbb O& X_0\end{pmatrix}.\end{equation}
It is easy to see from \eqref{ttxx0} that  for every $n\geq 1$ 
$$T^n=\begin{pmatrix}S_\infty^n& K_nX_0\\ \mathbb O& T_0^n\end{pmatrix}.$$
Therefore, $T$ is power bounded. By {\cite[Corollary 4.2]{Cas}}, $T$ is similar to a contraction. 
 Since $X_0$ is a quasiaffinity, \eqref{ttxx0} and \eqref{congss} imply the relation $T\prec S_\infty$. 
By {\cite[Corollary 3]{tak}}, $T\sim S_\infty$. It remains  to prove that an isometric asymptote of $T$ contains  a (non-trivial) unitary summand. 

Set  $\mathcal G_0=\mathcal G\ominus\mathcal E$. Let $h_1$, $h_2\in H^2_\infty$, let $x_1$, $x_2\in\mathcal H_0$, and let $n\geq 1$. A straightforward 
calculation shows that
\begin{equation*}\begin{aligned} (T^n(h_1\oplus x_1), T^n(h_2\oplus x_2))&=(h_1,h_2)+(X_0x_1,X_0x_2)+(T_0^nx_1,T_0^nx_2)\\&\quad  -(R_0^nX_0x_1,R_0^nX_0x_2).\end{aligned}\end{equation*}
By \eqref{lim} applied to $T_0$ and $YJ$, 
$$\lim_n(T_0^nx_1,T_0^nx_2)=(YJx_1,YJx_2).$$
By {\cite[Theorem 3(c)]{ker89}}, $(P_{\mathcal G_0}Y|_{\mathcal K_0}, U|_{\mathcal G_0})$ is a unitary asymptote of $R_0$ which satisfies \eqref{lim}. 
Therefore, $$\lim_n(R_0^nX_0x_1,R_0^nX_0x_2)=(P_{\mathcal G_0}YX_0x_1,P_{\mathcal G_0}YX_0x_2).$$
Taking into account that $P_{\mathcal G_0}YX_0=cP_{\mathcal G_0}YJ$ and setting 
$$A=(1-c^2)^{1/2}P_{\mathcal G_0}\oplus P_{\mathcal E},$$
we conclude that 
$$ \lim_n(T^n(h_1\oplus x_1), T^n(h_2\oplus x_2))=(h_1,h_2)+(X_0x_1,X_0x_2)+(AYJx_1,AYJx_2).$$
Define $X_1\in\mathcal L(H^2_\infty\oplus\mathcal H_0, H^2_\infty\oplus\mathcal K_0\oplus\mathcal G)$ by the formula 
$$X_1(h\oplus x)=h\oplus X_0x\oplus AYJx, \ \ \ \ h\in H^2_\infty, \ \  x\in\mathcal H_0.$$
Then $T$, $X_1$ satisfy \eqref{lim}. 
Since  $\mathcal E\in \operatorname{Hlat}U$, we have $AU=UA$. Therefore, $X_1T=(V\oplus U)X_1$. 
Set $\mathcal N=\operatorname{clos} X_1(H^2_\infty\oplus\mathcal H_0)$.
Consider $X_1$ as  a transformation acting into $\mathcal N$ and denote it by $X$. Then 
 $X\in\mathcal L(H^2_\infty\oplus\mathcal H_0, \mathcal N)$ and  $(X, (V\oplus U)|_{\mathcal N})$  
is an isometric asymptote of $T$ by \eqref{lim} and \cite{ker89}. 

Since $R$ is of class $C_{1\cdot}$, we have $\ker Y=\{0\}$. Consequently, $Y\mathcal M\neq\{0\}$. 
Let $y\in\mathcal M$. There exists  a sequence $\{x_n\}_n\subset\mathcal H_0$  such that $Jx_n\to y$. Then 
$X_0x_n=cP_{\mathcal K_0}Jx_n\to 0$ and $YJx_n\to Yy$.  Since $Yy\in\mathcal E$, we have 
$$AYy =((1-c^2)^{1/2}P_{\mathcal G_0}\oplus P_{\mathcal E})Yy=Yy.$$ We conclude that 
$$X(0\oplus x_n)\to 0\oplus 0\oplus AYy = 0\oplus 0\oplus Yy.$$
Thus, $\mathcal N\cap\mathcal E\supset\operatorname{clos}Y\mathcal M\neq\{0\}$.
By Lemma \ref{lemnnee}, $(V\oplus U)|_{\mathcal N}$ is not a unilateral shift. 
\end{proof}

In Example \ref{exaweight} contractions $T_0$ and $R$ taking from \cite{est} and \cite{kersz}   are given which satisfy the assumptions of Theorem \ref{thm5}. 

\begin{example}\label{exaweight}
The definition of bilateral weighted shift $\mathcal S_\omega$  is recalled in Introduction. 
 Notation from Introduction is used.  
Let $\omega_0$ and $\omega$ be two unbounded nonincreasing weights such that $\omega(n)=\omega_0(n)=1$ for all $n\geq 0$ and 
\begin{equation}\label{omega}\omega_0(-n)\geq\omega(-n) \text{   for all } n\geq1.\end{equation} 
Then the natural imbedding $J$ of $L^2_{\omega_0}(\mathbb T)$ into $L^2_\omega(\mathbb T)$ is a quasiaffinity and 
$Y_{\omega_0}=Y_\omega J$. 

A weight $\omega$ is called \emph{quasianalytic} if 
$$ \sum_{n=1}^\infty\frac{\log\omega(-n)}{n^2}=\infty$$
 and is called \emph{regular} if the sum above is finite. 
Moreover,  a  weight $\omega$ is quasianalytic if and only if  $\mathcal S_\omega$ is a quasianalytic contraction 
in the sense of definition \eqref{defquasi}, see {\cite[Propositon 31]{kersz}}, see also {\cite[Sec. 4]{est}}. 
Thus, if  $\omega_0$ and $\omega$ are two  weights which satisfy \eqref{omega} and such that $\omega_0$ is quasianalytic while $\omega$ 
is regular, then $T_0=\mathcal S_{\omega_0}$ and $R=\mathcal S_\omega$ satisfy  the assumptions of Theorem \ref{thm5}. 
For concrete weights, take  $0<\alpha<1$ and set $\omega_0(-n)=e^n$ and 
$\omega(-n)=e^{n^\alpha}$ for $n\geq 1$ (and $\omega_0(n)=\omega(n)=1$ for $n\geq 0$). 
\end{example}

\end{document}